\title{Cl\^oture int\'egrale et op\'erations de tores alg\'ebriques de complexit\'e un dans les vari\'et\'es affines}
\author{Kevin Langlois}
\date{}
\begin{document}
\maketitle

\theoremstyle{plain}
\newtheorem{theorem}{Th\'eor\`eme}[section]
\newtheorem{lemme}[theorem]{Lemme}
\newtheorem{proposition}[theorem]{Proposition}
\newtheorem{corollaire}[theorem]{Corollaire}
\newtheorem*{theorem*}{Théorème}

\theoremstyle{definition}
\newtheorem{definition}[theorem]{D\'efinition}
\newtheorem{rappel}[theorem]{}
\newtheorem{conjecture}[theorem]{Conjecture}
\newtheorem{exemple}[theorem]{Exemple}
\newtheorem{notation}[theorem]{Notation}

\theoremstyle{remark}
\newtheorem{remarque}[theorem]{Remarque}
\newtheorem{note}[theorem]{Note}

\small
\paragraph{Abstract}
In this paper, we are interested in multigraded affine algebras of complexity $1$.
One of the main results gives a description of integrally closed homogeneous ideals in terms 
of polyhedral divisors introduced by Altmann-Hausen. 
Another result allows us to compute effectively
the normalization of an affine variety with an algebraic torus
action of complexity one. We describe as well the integral closure of homogeneous ideals and
exhibit new examples of normal homogeneous ideals.
\paragraph{}
\normalsize
\section*{\centerline{Introduction}}
In this paper, we are interested in multigraded affine algebras of complexity $1$
over an algebraically closed field $\mathbf{k}$ of characteristic zero.

Using the convex geometry developped by Altmann-Hausen we obtain some new 
results on classical questions of commutative algebra. 
One of our main theorems gives  
a description of integrally closed homogeneous ideals in terms 
of polyhedral divisors, see Theorem $4.6$. 
Another result allows us to compute effectively  the normalization of an affine variety 
with an algebraic torus
action of complexity one. We describe as well the integral closure of homogeneous ideals,
see Theorem $2.4$, Theorem $4.2$ and exhibit new examples 
of normal homogeneous ideals, see Theorem $5.3$.

The following two classical examples illustrate well the issues that arise. Consider
the algebra of Laurent polynomials in $n$ variables
\begin{eqnarray*}
 L_{[n]} = L_{[n]}(\mathbf{k}):=\mathbf{k}\left[t_{1},t_{1}^{-1},t_{2},t_{2}^{-1},\ldots,t_{n},t_{n}^{-1}\right].
\end{eqnarray*}
Notice that $L_{[n]}$ is the coordinate ring of the affine variety $(\mathbf{k}^{\star})^{n}$.
Let $A$ be a subalgebra generated by a finite number of monomials, 
and let $E\subset \mathbb{Z}^{n}$ be the subset of exponents corresponding to the monomials
of $A$. Without loss of generality, we may suppose that $E$ generates the lattice $\mathbb{Z}^{n}$.
It is known [Ho] that the normalization of $A$ is the set of linear combinations of
all monomials with their exponents belonging to the rational cone $\omega\subset \mathbb{Q}^{n}$
generated by $E$. We have
\begin{eqnarray*}
\bar{A} = \bigoplus_{(m_{1},\,\ldots\,,\,m_{n})\in\,\omega\cap\mathbb{Z}^{n}}\mathbf{k}\, 
t_{1}^{m_{1}}\ldots\, t_{n}^{m_{n}}
\end{eqnarray*}
where $\bar{A}$ is the normalization of $A$. For instance, if $n = 1$ and if $A$
is the subalgebra generated by the monomials $t_{1}^{2}$ and $t_{1}^{3}$
then the normalization of $A$ is the polynomial ring $\mathbf{k}[t_{1}]$.

A similar problem arises for monomial ideals.
Assume that the algebra $A$ is normal. Let $I$ be an ideal of
$A$ generated by monomials. The convex hull in $\mathbb{Q}^{n}$
of all exponents appearing in $I$ is a polyhedron $P$ contained in 
$\omega$. This polyhedron $P$ satisfies $P+ \omega \subset P$.
The integral closure of $I$ is equal to
\begin{eqnarray*}
\bar{I} = \bigoplus_{(m_{1},\,\ldots\,,\,m_{n})\in\,P\cap\mathbb{Z}^{n}}\mathbf{k}\, t_{1}^{m_{1}}\ldots\, t_{n}^{m_{n}}.  
\end{eqnarray*}
We can determine $P$ by means of a finite system of monomials generating the ideal $I$. 
For instance, if $n = 2$, $A = \mathbb{C}[t_{1},t_{2}]$ and if $I$ is the ideal generated by the monomials 
$t_{1}^{3}$ and $t_{2}^{3}$ then 
\begin{eqnarray*}
\bar{I} = \left( t^{3}_{1},\,t_{1}^{2}t_{2},\,t_{1}t_{2}^{2},\,t_{2}^{3}\right ). 
\end{eqnarray*}
See [LeTe], [HS], [Va] for more details concerning
the integral closure of ideals; we recall below the definition. Note that the study of integrally closed
ideals allows us to find the normalization of a  blowing-up (see [KKMS]
for toric varieties and [Br] for spherical varieties). This is used as well in order
to describe ($\mathbb{T}$-equivariant) affine modifications
(see [KZ], [Du]).

By analogy with the monomial case presented above, we address more generally the actions of
algebraic tori of complexity $1$. Before
formulating our results, let us recall some notation.

An algebraic torus $\mathbb{T}$ of dimension $n$ is an algebraic group isomorphic 
to $(\mathbf{k}^{\star})^{n}$. Let $M$ be the character lattice of $\mathbb{T}$, and let
$A$ be an affine algebra over $\mathbf{k}$.  
Defining an algebraic action of $\mathbb{T}$ on $X = \rm Spec\,\it A$
 is the same as defining an $M$-grading of $A$. The complexity of the affine $M$-graded algebra
$A$ is the codimension of a generic $\mathbb{T}$-orbit in $X$. Note that the classical toric case
corresponds to the complexity $0$ case.

Let $A$ be a domain and let $I$ be an ideal of $A$.
An element $a\in A$ is said to be integral over $I$ if there exist
$r\in\mathbb{Z}_{>0}$ and $c_{i}\in I^{i}$, $i = 1,\ldots, r$ such
that $a^{r} + \sum_{i = 1}^{r}c_{i}a^{r-i} = 0$. The integral closure $\bar{I}$ of 
the ideal $I$ is the set of all elements of $A$ that are integral over $I$.
It is known that $\bar{I}$ is an ideal [HS, $1.3.1$]. An ideal $I$ is integrally closed if
$I = \bar{I}$. Furthermore, $I$ is said to be
normal if for any positive integer $e$, the ideal $I^{e}$
is integrally closed. If $A$ is normal then this latter condition is equivalent 
to the normality of the Rees algebra $A[It] = A\oplus\bigoplus_{i\geq 1}I^{i}t^{i}$.
See [Ri] for more details.

The purpose of this paper is to answer the following questions: given an affine 
$M$-graded algebra $A$ of complexity $1$ over $\mathbf{k}$ and  homogeneous elements
$a_{1},\ldots,a_{r}\in A$ such that 
\begin{eqnarray*}
A = \mathbf{k}\left[a_{1},\ldots,\,a_{r}\right],
\end{eqnarray*} 
can one describe explicitly the normalization of $A$ in terms of the generators $a_{1},\ldots,a_{r}$? 
Furthermore if $A$ is normal and if $I$ is a homogenous ideal in $A$, can one describe effectively
 the integral closure of $I$ in terms of a given finite system of homogeneous generators of $I$? There
exists a connection between these two questions. Indeed the answer to the second can be
deduced from that to the first by examining the normalization of the Rees algebra $A[It]$
corresponding to $I$. 

To answer the first question, it is useful to attach an appropriate combinatorial object to a given normal 
$M$-graded algebra $A$. For instance, in the monomial case if $A$ is normal then
 the rational cone $\omega$ allows us to reconstruct $A$. 

Recall that a $\mathbb{T}$-variety is a normal variety endowed with an effective algebraic
$\mathbb{T}$-action. There exists
several combinatorial descriptions of affine $\mathbb{T}$-varieties. See [De], [AH] for 
arbitrary complexity, [KKMS], [Ti] for complexity $1$ and [FZ] for the case of surfaces.
Note that the description of [AH] is generalized in [AHS] for non-affine 
$\mathbb{T}$-varieties.
See also the survey article [AOPSV] 
for applications of the theory of $\mathbb{T}$-varieties. 

In this paper, we use the point of view of [AH] and [Ti]. To simplify things
we assume that $M = \mathbb{Z}^{n}$ and $\mathbb{T} = (\mathbf{k}^{\star})^{n}$. 
An affine  
$\mathbb{T}$-variety $X = \rm Spec\,\it A$ of complexity $1$ can be
described by its weight cone $\omega\subset\mathbb{Q}^{n}$ and by a polyhedral
divisor $\mathfrak{D}$ on a smooth algebraic curve $C$, whose coefficients
are polyhedra in $\mathbb{Q}^{n}$. For any element 
$m = (m_{1},\ldots,m_{n})$ of $\omega\cap\mathbb{Z}^{n}$, we have an evaluation
$\mathfrak{D}(m)$ belonging to the  $\mathbb{Q}$-linear space of
rational Weil divisors on $C$. Given a combinatorial data $(\omega, C,\mathfrak{D})$ 
one can construct an $M$-graded algebra
\begin{eqnarray*}
A[C,\mathfrak{D}] := \bigoplus_{m\in\omega\cap M}H^{0}(C,\mathcal{O}_{C}(\lfloor\mathfrak{D}(m)\rfloor ))\chi^{m},
\end{eqnarray*}
where $\chi^{m}$ is the Laurent monomial $t_{1}^{m_{1}}\ldots t_{n}^{m_{n}}$. See [AH] for definitions
and specific statements. One of the main results of this paper can be stated as follows (see
Theorem $2.4$).

\paragraph{Theorem.}{\em Let $C$ be a smooth algebraic curve. Consider a subalgebra
\begin{eqnarray*}
B = \mathbf{k}[C][f_{1}\chi^{s_{1}},\ldots,f_{r}\chi^{s_{r}}]\subset L_{[n]}(\mathbf{k}(C))
:=\mathbf{k}(C)\left[t_{1},t_{1}^{-1},\ldots,t_{n},t_{n}^{-1}\right] 
\end{eqnarray*}
such that
\begin{eqnarray*}
\rm Frac\, \it B = \rm Frac\,\it L_{[n]}(\mathbf{k}(C)),
\end{eqnarray*}
where $s_{i}\in\mathbb{Z}^{n}$, $\chi^{s_{i}}$ is the corresponding Laurent monomial, and
$f_{i}\in\mathbf{k}(C)^{\star}$. Then the  normalization of
$B$ is the algebra $A[C,\mathfrak{D}]$ with its weight cone $\omega$ generated by $s_{1},\ldots, s_{r}$
and with the following coefficient of the polyhedral divisor $\mathfrak{D}$ at the point $z\in C$ $\rm :$
\begin{eqnarray*}
\Delta_{z} = \left\{\,v\in\mathbb{Q}^{n},\,\left\langle s_{i}\,,\,v\right\rangle\geq -\nu_{z}(f_{i}),\,
i = 1,\ldots, r\,\right\},
\end{eqnarray*}
where $\nu_{z}(f_{i})$ is the order of $f_{i}$ at $z$.}
\paragraph{}
This theorem answers the first question.
It generalizes well known results for the case of affine 
surfaces with $\mathbf{k}^{\star}$-actions [FZ, 3.9, 4.6].
Note also that it may be applied 
\begin{enumerate}
 \item[(*)] to find a proper polyhedral divisor representing a (normal) subalgebra given 
by generators ;
\item[(**)] to find generators of an algebra represented by a proper polyhedral divisor:
the idea is to guess some generating set and apply (*).  
\end{enumerate} 

The answer to the second question is given by Theorem $4.2$.
It is known that the set of integrally closed homogeneous ideals of the affine toric
variety with weight cone $\omega$ is in bijective correspondence with the 
set of integral $\omega$-polyhedra contained in $\omega$ (see [KKMS] and section $3$).
We provide a similar correspondence for integrally closed homogeneous
ideals on affine $\mathbb{T}$-varieties of complexity $1$ (see Theorem $4.6$)
that is totally combinatorial when $C$ is affine (see Corollary $4.7$).

Any integrally closed homogeneous ideal $I$ of an affine 
$\mathbb{T}$-variety $X = \rm Spec\,\it A$ of complexity $1$ can be described by means of a pair 
$(P,\widetilde{\mathfrak{D}})$ where $P$ is an integral polyhedron in $\mathbb{Q}^{n}$. 
This polyhedron plays the same role as the Newton
polyhedron does in the monomial case. The polyhedral divisor $\widetilde{\mathfrak{D}}$
corresponds to the normalization of the Rees algebra of $I$. We give a geometric 
interpretation of
the coefficients of $\widetilde{\mathfrak{D}}$. Assume for instance that the weight cone
of $A$ is strongly convex, and let $\widetilde{\Delta}_{z}$ be the coefficient
of $\widetilde{\mathfrak{D}}$ at a point $z\in C$. Then Theorem $4.6$
provides conditions on the equations of facets of $\widetilde{\Delta}_{z}$ so that
$\widetilde{\mathfrak{D}}$ corresponds to the normalization of the Rees algebra $A[It]$.

We provide as well sufficient conditions 
on $(P,\widetilde{\mathfrak{D}})$ in order that $I$ be normal 
(see Theorem $5.3$). For
the case of non-elliptic affine $\mathbf{k}^{\star}$-surfaces,
we obtain a combinatorial proof for the normality of any integrally closed
invariant ideals of such surfaces.
As another application, we obtain the following new criterion of normality
which generalizes Reid-Roberts-Vitulli's Theorem [RRV, $3.1$] in
the case of complexity $0$ (see $5.5$).

\paragraph{Theorem.}{\em 
Let $n\geq 1$ be an integer and let $\mathbf{k}^{[n+1]} = \mathbf{k}[x_{0},\ldots, x_{n}]$
be the algebra of polynomials in $n+1$ variables over $\mathbf{k}$.
We endow $\mathbf{k}^{[n+1]}$ with the $\mathbb{Z}^{n}$-grading given by
\begin{eqnarray*}
\mathbf{k}^{[n+1]} = \bigoplus_{m = (m_{1},\ldots, m_{n})\in\mathbb{N}^{n}}A_{m},
\,\,\,\it where\,\,\,\it 
A_{m} = \mathbf{k}[x_{\rm \,0 \it }]\,x_{\rm 1\it }^{m_{\rm 1\it}}
\ldots x_{n}^{m_{n}}.
\end{eqnarray*}
For a homogeneous ideal $I$ of $\mathbf{k}^{[n+1]}$ the following are equivalent.
\begin{enumerate}
\item[\rm (i)] The ideal $I$ is normal;
\item[\rm (ii)] For any $e\in\{1,\ldots, n\}$, the ideal $I^{e}$ is 
integrally closed.
\end{enumerate}
} 

\paragraph{}
Let us give a brief summary of the contents of each section. 
In the first section, we recall some notions on tori actions
of complexity $1$ and on polyhedral divisors of Altmann-Hausen.
In the second section, we treat the normalization problem
for multigraded algebras and show Theorem $2.4$. The third section 
focuses on integrally closed monomial ideals.
In section $4$, we study the description 
by the pair $(P,\widetilde{\mathfrak{D}})$ for integrally
closed homogeneous ideals of affine $\mathbb{T}$-varieties. Finally, in the last section,   
we discuss the problem of normality in a special class.
\paragraph{}
Throughout this paper $\mathbf{k}$ is an algebraically closed field of characteristic zero.
By a variety we mean an integral separated scheme of finite type over $\mathbf{k}$.
\paragraph{}
{ \em Acknowledgments}:
The author is grateful to Mikhail Zaidenberg for his permanent encouragement and 
for posing the problem of section $2$.
We thank Ronan Terpereau and Mateusz Michalek for useful discussions.
We thank also the referees for pertinent remarks that allowed to improve
the presentation.

\section{$\mathbb{T}$-vari\'et\'es affines de complexit\'e un et g\'eom\'etrie convexe}
Nous rappelons ici les notions basiques sur les op\'erations de tores alg\'ebriques dont nous
aurons besoin par la suite. 
\begin{rappel}
Soit $N$ un r\'eseau de rang $n$ et $M = \rm Hom(\it N,\mathbb{Z})$
son r\'eseau dual. On note $N_{\mathbb{Q}} = \mathbb{Q}\otimes_{\mathbb{Z}}N$ et 
$M_{\mathbb{Q}} = \mathbb{Q}\otimes_{\mathbb{Z}}M$
les $\mathbb{Q}$-espaces vectoriels duaux associ\'es. Au r\'eseau $M$, on associe un tore 
alg\'ebrique $\mathbb{T}$ de dimension $n$ dont son alg\`ebre
des fonctions r\'eguli\`eres est d\'efinie par 
\begin{eqnarray*}
\mathbf{k}[\mathbb{T}] = \bigoplus_{m\in M}\mathbf{k}\,\chi^{m}.
\end{eqnarray*}
La famille $(\chi^{m})_{m\in M}$ satisfaisant les relations $\chi^{m}\cdot\chi^{m'} = \chi^{m+m'}$, 
pour tous $m,m'\in M$. Le 
choix d'une base de $M$ donne un isomorphisme entre $\mathbf{k}[\mathbb{T}]$ et l'alg\`ebre 
des polyn\^omes de Laurent \`a $n$ variables.
Chaque fonction $\chi^{m}$ s'interpr\`ete comme un caract\`ere de $\mathbb{T}$.
\end{rappel} 
\begin{rappel}
Soit $X$ une vari\'et\'e affine et supposons que $\mathbb{T}$ op\`ere alg\'ebriquement dans $X$. 
Alors cela induit une op\'eration de $\mathbb{T}$ dans 
$A := \mathbf{k}[X]$ d\'efinie par $(t\cdot f)(x) = f(t\cdot x)$ avec $t\in \mathbb{T}$, 
$f\in \mathbf{k}[X]$ et $x\in X$, faisant du $\mathbf{k}$-espace vectoriel $A$ un 
$\mathbb{T}$-module rationnel. Le $\mathbb{T}$-module $A$ admet une d\'ecomposition 
$A = \bigoplus_{m\in M}A_{m}$ en somme directe de sous-espaces vectoriels o\`u
pour tout $m\in M$, 
\begin{eqnarray*}
A_{m} = \left\{f\in A\,|\,\forall t\in \mathbb{T},\,t\cdot f = \chi^{m}(t)f\right\}\,.
\end{eqnarray*}
L'alg\`ebre $A$ est ainsi munie d'une $M$-graduation. R\'eciproquement, 
toute $M$-graduation de la $\mathbf{k}$-alg\`ebre $A$ est obtenue
par une op\'eration alg\'ebrique de $\mathbb{T}$ dans $X = \rm Spec\,\it A$. La partie
\begin{eqnarray*}
S := \left\{m\in M\,|\, A_{m}\neq \{0\}\right\}
\end{eqnarray*}
de $M$ contenant $0$ et stable par l'addition est appel\'ee \em semi-groupe des poids \rm de $A$. 
Puisque $A$ est de type fini sur $\mathbf{k}$, 
l'ensemble $S$ engendre un c\^one poly\'edral $\omega\subset M_{\mathbb{Q}}$ 
dit \em c\^one des poids \rm de $A$. 

L'op\'eration de $\mathbb{T}$ dans $X$ est fid\`ele si et seulement si $S$ n'est pas contenu dans un sous-r\'eseau propre de $M$. 
Dans ce cas, le c\^one $\omega$ est de dimension $n$ et
il existe un unique c\^one poly\'edral saillant $\sigma\subset N_{\mathbb{Q}}$ tel que
\begin{eqnarray*}
\omega = \left\{m\in M_{\mathbb{Q}}\,|\,\forall v\in\sigma,\, m(v)\geq 0\right\}\,.
\end{eqnarray*}
On dit que $\omega$ est le \em c\^one dual \rm de $\sigma$. On le note $\sigma^{\vee}$. 
Une \em $\mathbb{T}$-vari\'et\'e \rm est une vari\'et\'e 
normale munie d'une op\'eration alg\'ebrique fid\`ele de $\mathbb{T}$.

\end{rappel}
\begin{rappel}
Nous appelons \em complexit\'e \rm de l'op\'eration de $\mathbb{T}$ dans $X = \rm Spec\,\it A$, le degr\'e de 
transcendance de l'extension de corps $\mathbf{k}(X)^{\mathbb{T}}/\mathbf{k}$. La complexit\'e s'interpr\`ete 
g\'eom\'etriquement comme la codimension d'une orbite
en position g\'en\'erale. Lorsque l'op\'eration est fid\`ele, elle est \'egale \`a $\dim\, X - \dim\,\mathbb{T}$. 
Elle fut 
introduite dans [LV] pour les op\'erations de groupes alg\'ebriques r\'eductifs dans les espaces homog\`enes. 

Les $\mathbb{T}$-vari\'et\'es affines de complexit\'e $0$ sont les \em vari\'et\'es toriques affines. \rm 
Elles admettent une description bien connue en terme de c\^ones poly\'edraux saillants.
Pour plus d'informations, voir [CLS], [Da], [Fu], ainsi que [Od].
\paragraph{}
Etant donn\'es un sous-semi-groupe $E\subset M$ et un corps $K_{0}$,  
nous notons
\begin{eqnarray*}
K_{0}[E] = \bigoplus_{m\in E}K_{0}\,\chi^{m}.
\end{eqnarray*}
l'alg\`ebre du semi-groupe $E$ sur le corps $K_{0}$. Pour tout c\^one poly\'edral 
$\tau\subset M_{\mathbb{Q}}$, nous \'ecrivons $\tau_{M}:=\tau\cap M$. 
\end{rappel}
\paragraph{}
Nous rappelons une description combinatoire des $\mathbb{T}$-vari\'et\'es affines de complexit\'e $1$ 
obtenue dans [AH], [Ti]. Voir [FZ] pour 
la pr\'esentation de Dolgachev-Pinkham-Demazure $(D.P.D.)$ des 
$\mathbb{C}^{\star}$-surfaces affines complexes. 
Notons que 
l'approche donn\'ee dans [Ti] provient d'une description 
ant\'erieure des op\'erations de groupes alg\'ebriques r\'eductifs 
de complexit\'e $1$ [Ti $2$]. Dans [Vo], on donne un lien entre
[AH] et la description classique de [KKMS].

\begin{rappel}
Soit $C$ une courbe alg\'ebrique lisse et $\sigma\subset N_{\mathbb{Q}}$ un c\^one poly\'edral saillant. Une partie
$\Delta\subset N_{\mathbb{Q}}$ est un \em $\sigma$-poly\`edre \rm si $\Delta$ est somme de Minkowski de $\sigma$ et 
d'un polytope $Q\subset N_{\mathbb{Q}}$. On note $\rm Pol_{\sigma}(\it N_{\mathbb{Q}})$ le semi-groupe 
ab\'elien des $\sigma$-poly\`edres de loi la somme de Minkowski et d'\'el\'ement neutre $\sigma$. 

Pour $\Delta\in\rm Pol_{\sigma}(\it N_{\mathbb{Q}})$, on d\'efinit la fonction $h_{\Delta}:\sigma^{\vee}\rightarrow \mathbb{Q}$
 par $h_{\Delta}(m)  = \min_{v\in\Delta}m(v)$, pour tout $m\in\sigma^{\vee}$. On dit que $h_{\Delta}$
est la \em fonction de support \rm de $\Delta$. Elle est identiquement nulle si et seulement si $\Delta = \sigma$. Pour tous $m,m'\in\sigma^{\vee}$, 
on a l'in\'egalit\'e de sous-additivit\'e 
\begin{eqnarray}
h_{\Delta}(m)+h_{\Delta}(m')\leq h_{\Delta}(m+m').
\end{eqnarray}
Un \em diviseur $\sigma$-poly\'edral \rm sur $C$ est une somme formelle
\begin{eqnarray*}
\mathfrak{D} = \sum_{z\in C}\Delta_{z}\,.\,z
\end{eqnarray*} 
 o\`u chaque $\Delta_{z}$ appartient \`a $\rm Pol_{\sigma}(\it N_{\mathbb{Q}})$ avec pour presque 
\footnote{ Cela signifie qu'il existe un sous-ensemble fini $F\subset C$ tel que pour tout $z\in C-F$, 
$\Delta_{z} = \sigma$.} tout $z\in C$, $\Delta_{z} = \sigma$. On note $\rm supp(\it\mathfrak{D}\rm)$, dit
\em support \rm de $\mathfrak{D}$, l'ensemble des points $z\in C$ tels que $\Delta_{z}\neq \sigma$.
On dit que $\mathfrak{D}$ est \em propre \rm lorsque 
pour tout $m\in\sigma^{\vee}_{M}$, l'\'evaluation 
\begin{eqnarray*}
\mathfrak{D}(m) := \sum_{z\in C}h_{\Delta_{z}}(m)\,.\, z
\end{eqnarray*}
est un $\mathbb{Q}$-diviseur semi-ample et abondant (big) pour $m$ appartenant \`a l'int\'erieur relatif de 
$\sigma^{\vee}$.
\end{rappel}
\begin{rappel}
 La propret\'e de $\mathfrak{D}$ est d\'ecrite en distinguant les deux cas suivants [AH, $2.12$].
\begin{enumerate}
 \item[(i)]
Lorsque $C$ est une courbe projective lisse, le diviseur poly\'edral $\mathfrak{D}$ est propre si et seulement 
si pour tout vecteur $m\in\sigma^{\vee}_{M}$, on a $\rm deg(\it \mathfrak{D}(m)\rm ) \geq 0$ et si 
$\rm deg(\it \mathfrak{D}(m)\rm ) = 0$ alors $m$ appartient 
au bord de $\sigma^{\vee}$ et un multiple entier non nul de $\mathfrak{D}(m)$ 
est principal;
\item[(ii)]Lorsque $C$ est une courbe affine lisse, aucune condition n'est impos\'ee sur l'\'evaluation de $\mathfrak{D}$. 
\end{enumerate}
\end{rappel}
\begin{notation}
D'apr\`es l'in\'egalit\'e (1) dans $1.4$, si $\mathfrak{D}$ est un diviseur $\sigma$-poly\'edral sur $C$ alors pour
tous vecteurs $m,m'\in\sigma^{\vee}_{M}$, l'application 
\begin{eqnarray*}
H^{0}(C,\mathcal{O}_{C}(\lfloor\mathfrak{D}(m)\rfloor ))\times H^{0}(C,\mathcal{O}_{C}(\lfloor\mathfrak{D}(m')\rfloor ))\rightarrow
H^{0}(C,\mathcal{O}_{C}(\lfloor\mathfrak{D}(m+m')\rfloor )),\,\, (f,g)\mapsto f\cdot g  
\end{eqnarray*}
 est bien d\'efinie. Le $\mathbb{T}$-module 
\begin{eqnarray*}
A[C,\mathfrak{D}] := \bigoplus_{m\in\sigma^{\vee}_{M}}H^{0}(C,\mathcal{O}_{C}(\lfloor\mathfrak{D}(m)\rfloor ))\chi^{m},
\end{eqnarray*}
est donc une alg\`ebre $M$-gradu\'ee. On la notera $A[\mathfrak{D}]$ 
lorsque la mention de $C$ est claire. 
Pour tout \'el\'ement homog\`ene $f\chi^{m}$ de $\mathbf{k}(C)[M]$, 
nous sous-entendons que $f\in k(C)^{\star}$ et que $\chi^{m}$ 
est un caract\`ere du tore $\mathbb{T}$. 
\end{notation}
\paragraph{}
Le th\'eor\`eme suivant d\'ecrit les alg\`ebres affines 
normales $M$-gradu\'ees de complexit\'e $1$
et de c\^one des poids $\sigma^{\vee}$ (voir [AH], [Ti]).
\begin{theorem}
\begin{enumerate}
 \item[\rm (i)] 
Si $\mathfrak{D}$ est un diviseur $\sigma$-poly\'edral propre sur une courbe alg\'ebrique lisse $C$ 
alors l'alg\`ebre multigradu\'ee $A = A[C,\mathfrak{D}]$
est normale, de type fini sur $\mathbf{k}$ et sa $M$-graduation fait de $X = \rm Spec\, \it A\rm$ 
une $\mathbb{T}$-vari\'et\'e de complexit\'e $1$. R\'eciproquement, toute alg\`ebre des fonctions
r\'eguli\`eres d'une $\mathbb{T}$-vari\'et\'e affine de complexit\'e $1$ est isomorphe comme alg\`ebre 
$M$-gradu\'ee \`a $A[C,\mathfrak{D}]$, o\`u $C$ est une courbe alg\'ebrique lisse et
$\mathfrak{D}$ est un diviseur poly\'edral propre sur $C$.
\item[\rm (ii)]
Plus pr\'ecis\'ement, toute sous-alg\`ebre multigradu\'ee $A\subset\mathbf{k}(C)[M]$ 
normale, de type fini sur $\mathbf{k}$, satisfaisant $A_{0} = \mathbf{k}[C]$,
de m\^eme corps des fractions que $A$ et ayant $\sigma^{\vee}$ pour c\^one 
des poids, est \'egale \`a $A[C,\mathfrak{D}]$ pour un unique diviseur
$\sigma$-poly\'edral $\mathfrak{D}$ propre sur $C$. 
\end{enumerate}
\end{theorem}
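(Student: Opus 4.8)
Le plan consiste à traiter les deux assertions séparément, en passant par la construction relative au-dessus de la courbe $C$.

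Pour (i), je commencerais par recoller les évaluations $\lfloor\mathfrak{D}(m)\rfloor$ en un faisceau quasi-cohérent d'algèbres $M$-graduées sur $C$,
\[
\widetilde{\mathcal{A}} = \bigoplus_{m\in\sigma^{\vee}_{M}}\mathcal{O}_{C}(\lfloor\mathfrak{D}(m)\rfloor)\,\chi^{m},
\]
bien défini grâce à l'inégalité de sous-additivité $(1)$ de $1.4$ et de type fini grâce à la finitude du support de $\mathfrak{D}$. On pose $\widetilde{X} = \mathrm{Spec}_{C}\,\widetilde{\mathcal{A}}$, qui est un schéma intègre affine au-dessus de $C$ et dont l'algèbre des sections globales est exactement $A = A[C,\mathfrak{D}]$. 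L'étape clé est la normalité de $\widetilde{X}$ : elle est locale sur $C$ et, sur un ouvert affine trivialisant les diviseurs $\lfloor\mathfrak{D}(m)\rfloor$, se ramène à la normalité d'une algèbre de semi-groupe gouvernée par les polyèdres $\Delta_{z}$, conséquence de l'usage des parties entières. Comme pour tout schéma intègre normal $Y$ l'anneau $\mathcal{O}_{Y}(Y) = \bigcap_{y\in Y}\mathcal{O}_{Y,y}$ est intégralement clos dans son corps des fractions, on en déduit aussitôt que $A$ est normale.

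Je contrôlerais ensuite la finitude de type et la complexité. Le caractère semi-ample de $\mathfrak{D}$ (dans la propreté) fournit l'engendrement des algèbres de sections, d'où la finitude de type de $A$ par un argument de type Gordan ; le caractère abondant rend le morphisme canonique $\widetilde{X}\rightarrow X = \mathrm{Spec}\,A$ birationnel. On obtient alors $\mathrm{Frac}\,A = \mathbf{k}(C)(M)$, de degré de transcendance $n+1$ sur $\mathbf{k}$, avec $\mathbf{k}(X)^{\mathbb{T}} = \mathbf{k}(C)$ de degré de transcendance $1$ : la complexité vaut $1$, et la fidélité résulte de ce que $\omega = \sigma^{\vee}$ est de dimension maximale.

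Pour (ii), je reconstruirais $\mathfrak{D}$ à partir des composantes homogènes. Pour $m\in\sigma^{\vee}_{M}$, posons $a_{z}(m) = -\min_{0\neq f\in A_{m}\chi^{-m}}\nu_{z}(f)$, bien défini car $A$ est de type fini. L'inclusion $A_{m}\cdot A_{m'}\subseteq A_{m+m'}$ entraîne la suradditivité $a_{z}(m)+a_{z}(m')\leq a_{z}(m+m')$, de sorte que $h_{z}(m) := \lim_{k\to\infty}a_{z}(km)/k$ définit une fonction concave, positivement homogène et linéaire par morceaux sur $\sigma^{\vee}$. Le cœur de la preuve est le calcul explicite de la clôture intégrale : un élément homogène $g\chi^{m}\in\mathbf{k}(C)[M]$ est entier sur $A$ si et seulement si $\nu_{z}(g)\geq -\lfloor h_{z}(m)\rfloor$ pour tout $z$. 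La normalité de $A$ donne alors $A_{m}\chi^{-m} = H^{0}(C,\mathcal{O}_{C}(\lfloor\sum_{z}h_{z}(m)\,z\rfloor))$. Par dualité convexe, chaque $h_{z}$ est la fonction de support d'un unique $\Delta_{z}\in\mathrm{Pol}_{\sigma}(N_{\mathbb{Q}})$, égal à $\sigma$ sauf pour un nombre fini de $z$ (finitude de type) ; le diviseur $\mathfrak{D} = \sum_{z}\Delta_{z}\cdot z$ vérifie $A = A[C,\mathfrak{D}]$, et sa propreté découle de la finitude de type via $1.5$. L'unicité est immédiate puisque $h_{\Delta_{z}} = h_{z}$ est entièrement lue sur $A$.

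La principale difficulté sera double. D'une part, dans (i), il faudra établir soigneusement la finitude de type à partir de la semi-amplitude, c'est-à-dire l'argument de Gordan sur les anneaux de sections d'un diviseur semi-ample et abondant. D'autre part, dans (ii), le point délicat est de traduire fidèlement la normalité de $A$ en le calcul de clôture intégrale ci-dessus : c'est là que se rencontrent l'algèbre commutative (intégralité dans la direction de $C$, qui rend les modules $A_{m}\chi^{-m}$ divisoriels) et la géométrie convexe (intégralité dans la direction de $M$, qui impose la structure de partie entière d'une fonction de support concave).
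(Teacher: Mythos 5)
Votre plan suit l'architecture standard, mais notez d'abord que le papier ne d\'emontre pas ce th\'eor\`eme : il le cite comme r\'esultat connu de [AH] et [Ti]. La comparaison pertinente est donc avec ces preuves, que votre esquisse reproduit dans les grandes lignes ($\rm Spec$ relatif au-dessus de $C$ et mod\`eles locaux de type torique pour $\rm (i)$, reconstruction des fonctions de support \`a partir des ordres de p\^oles pour $\rm (ii)$). Pour $\rm (i)$, le sch\'ema est viable ; signalons toutefois que la normalit\'e s'obtient plus directement par l'argument de Demazure (intersection d'anneaux de valuation discr\`ete, [De, \S 2.7]), que le papier invoque lui-m\^eme dans la preuve du th\'eor\`eme $2.4$, et que la g\'en\'eration finie \`a partir de la semi-amplitude n'est pas un simple argument \`a la Gordan : c'est un th\'eor\`eme de finitude d'anneaux de sections, et c'est pr\'ecis\'ement l\`a que sert la propret\'e de $\mathfrak{D}$.

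Le trou s\'erieux est dans $\rm (ii)$. D'abord, la d\'efinition $a_{z}(m) = -\min_{0\neq f\in A_{m}\chi^{-m}}\nu_{z}(f)$ n'a pas de sens lorsque $A_{m} = \{0\}$, ce qui se produit r\'eellement dans le cas elliptique (voir la remarque $1.12$ du papier) ; il faut travailler avec les multiples $km$ appartenant au semi-groupe des poids. Ensuite, et surtout, le calcul de cl\^oture int\'egrale que vous annoncez comme le c\oe ur de la preuve est pr\'ecis\'ement ce qu'il faut d\'emontrer, et vous ne le d\'emontrez pas. La direction difficile est : si $\nu_{z}(g)\geq -\lfloor h_{z}(m)\rfloor$ pour tout $z$, alors $g\chi^{m}$ est entier sur $A$. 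Le pi\`ege est qu'\emph{a priori} $A_{m'}\chi^{-m'}$ est un sous-espace strict de l'espace des fonctions dont les p\^oles sont born\'es par $\sum_{z}a_{z}(m')\cdot z$ : appartenir \`a $A$ n'est pas une condition de p\^oles, donc borner les p\^oles de $g^{k}$ ne suffit pas. Il faut produire un d\'enominateur commun, c'est-\`a-dire $f\in A_{m_{0}}\chi^{-m_{0}}$ non nul tel que $fg^{k}\chi^{km+m_{0}}\in A$ pour tout $k$, ce qui exige de contr\^oler le d\'efaut $kh_{z}(m)-a_{z}(km+m_{0})$ ; sur une courbe projective, cela passe par des arguments de semi-amplitude et de Riemann--Roch. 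Ce n'est pas un hasard : votre \'enonc\'e cl\'e est essentiellement \'equivalent au th\'eor\`eme $2.4$ du papier, que celui-ci d\'emontre en s'appuyant sur le th\'eor\`eme $1.7$, [AH, Lemma 9.1] et Riemann--Roch. De m\^eme, l'unicit\'e n'est pas \og imm\'ediate \fg{} dans le cas elliptique, car $H^{0}$ d'un diviseur ne d\'etermine pas ce diviseur en g\'en\'eral (elle utilise encore la semi-amplitude des grands multiples), et la lin\'earit\'e par mor\c ceaux de $h_{z}$ (donc le fait que $\Delta_{z}$ soit bien un $\sigma$-poly\`edre) ainsi que la propret\'e du diviseur reconstruit -- en particulier le caract\`ere big pour $m$ int\'erieur, via l'argument de degr\'e de transcendance -- sont affirm\'es sans preuve. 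En l'\'etat, votre proposition identifie correctement les difficult\'es mais laisse la partie substantielle de $\rm (ii)$ non d\'emontr\'ee.
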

\begin{notation}
Si $\mathfrak{D}$ est un diviseur poly\'edral
propre sur $C$ alors on note par $X[C,\mathfrak{D}]$ la $\mathbb{T}$-vari\'et\'e 
affine $\rm Spec \,\it A[C,\mathfrak{D}]\rm$ correspondante.
\end{notation}
\paragraph{}
L'assertion suivante est connue [AH, \S 8], [Li, Theorem 1.4(3)]. Elle permet de comparer deux donn\'ees combinatoires 
$(C, \sigma, \mathfrak{D})$ et $(C', \sigma', \mathfrak{D}')$ donnant des $\mathbf{k}$-alg\`ebres $M$-gradu\'ees isomorphes.
\begin{theorem}
Soient $C,C'$ des courbes alg\'ebriques lisses et $\mathfrak{D} , \mathfrak{D}'$ des diviseurs poly\'edraux propres
respectivement sur $C,C'$ selon des c\^ones poly\'edraux saillants $\sigma , \sigma'\subset N_{\mathbb{Q}}$. Alors 
$X[C,\mathfrak{D}]$ et $X[C',\mathfrak{D}']$ sont $\mathbb{T}$-isomorphes si et seulement si il existe un automorphisme  
de r\'eseau $F:N\rightarrow N$ tel que\footnote{Un automorphisme de r\'eseau $F:N\rightarrow N$ induit un 
automorphisme de l'espace vectoriel $N_{\mathbb{Q}}$ que l'on note $F_{\mathbb{Q}}$.} $F_{\mathbb{Q}}(\sigma) = \sigma'$,
 un isomorphisme $\phi : C\rightarrow C'$ de courbes alg\'ebriques,
$v_{1},\ldots,v_{r}\in N$ et $f_{1},\ldots,f_{r}\in k(C)^{\star}$ des fonctions rationnelles non nulles tels que pour tout $m\in\sigma'^{\,\vee}_{M}$,
\begin{eqnarray*}
\phi^{\star}(\mathfrak{D}')(m) = F_{\star}(\mathfrak{D})(m) + \sum_{i=1}^{r}m(v_{i})\cdot\rm div( \it f_{i}\rm )
\end{eqnarray*} 
avec 
\begin{eqnarray*}
\mathfrak{D} = \sum_{z\in C}\Delta_{z}\,.\, z,\,\,\mathfrak{D}' = \sum_{z'\in C'}\Delta'_{z'}\,.\, z',\,\,
\phi^{\star}(\mathfrak{D}') = \sum_{z'\in C'}\Delta'_{z'}\,.\,\phi^{-1}(z')\,\, \rm et \it\,\,
F_{\star}(\mathfrak{D}) = \sum_{z\in C}(F_{\mathbb{Q}}(\rm \Delta_{\it z}) + \sigma )\,.\,\it z.
\end{eqnarray*}
\end{theorem}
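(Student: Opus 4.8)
The plan is to prove both implications, treating the reverse implication (sufficiency) as an explicit construction and the direct implication (necessity) by recovering the combinatorial data from the $M$-graded algebra. Throughout I write $F^{\ast}:M\to M$ for the transpose of $F$; since $F_{\mathbb{Q}}(\sigma)=\sigma'$, it maps ${\sigma'}^{\vee}_{M}$ bijectively onto $\sigma^{\vee}_{M}$, and the pushforward $F_{\star}(\mathfrak{D})$ is characterised by $F_{\star}(\mathfrak{D})(m)=\mathfrak{D}(F^{\ast}m)$ for $m\in{\sigma'}^{\vee}_{M}$, which follows from the support-function identity $h_{F_{\mathbb{Q}}(\Delta_{z})}(m)=h_{\Delta_{z}}(F^{\ast}m)$ together with the vanishing of the recession term on ${\sigma'}^{\vee}$.

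For the sufficiency, suppose $F$, $\phi$, the $v_{i}$ and the $f_{i}$ are given. I would first note that the displayed identity descends to rounded-down divisors: since $v_{i}\in N$ and $m\in M$, each $m(v_{i})$ is an integer, so $\sum_{i}m(v_{i})\operatorname{div}(f_{i})$ is an integral divisor and the floor distributes, giving $\lfloor\phi^{\star}(\mathfrak{D}')(m)\rfloor=\lfloor\mathfrak{D}(F^{\ast}m)\rfloor+\sum_{i}m(v_{i})\operatorname{div}(f_{i})$. Equivalently $\mathcal{O}_{C}(\lfloor\phi^{\star}(\mathfrak{D}')(m)\rfloor)=\bigl(\prod_{i}f_{i}^{\,m(v_{i})}\bigr)^{-1}\mathcal{O}_{C}(\lfloor\mathfrak{D}(F^{\ast}m)\rfloor)$. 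I would then define a map sending a homogeneous element $g\chi^{m}$ of $A[C',\mathfrak{D}']$ to $\phi^{\star}(g)\cdot\bigl(\prod_{i}f_{i}^{\,m(v_{i})}\bigr)\chi^{F^{\ast}m}$: pulling $g$ back by $\phi$ and multiplying by $\prod_{i}f_{i}^{\,m(v_{i})}$ carries $H^{0}(C,\mathcal{O}_{C}(\lfloor\mathfrak{D}'(m)\rfloor))$ isomorphically onto $H^{0}(C,\mathcal{O}_{C}(\lfloor\mathfrak{D}(F^{\ast}m)\rfloor))$ by the identity above, so the target lies in $A[C,\mathfrak{D}]$. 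Multiplicativity follows from $\chi^{m}\chi^{m'}=\chi^{m+m'}$ together with $f_{i}^{\,m(v_{i})}f_{i}^{\,m'(v_{i})}=f_{i}^{\,(m+m')(v_{i})}$, and bijectivity from the symmetric inverse, producing the desired $\mathbb{T}$-isomorphism.

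For the necessity, let $\Psi$ be a $\mathbb{T}$-isomorphism. The induced automorphism of $\mathbb{T}$ yields a lattice automorphism $F:N\to N$, and since the weight cone is a $\mathbb{T}$-invariant, $F^{\ast}$ sends ${\sigma'}^{\vee}$ onto $\sigma^{\vee}$, whence $F_{\mathbb{Q}}(\sigma)=\sigma'$. To recover $\phi$, I would use that the invariant field $\mathbf{k}(X)^{\mathbb{T}}$ equals $\mathbf{k}(C)$ and that $\Psi$ induces an isomorphism of these fields: on a smooth affine curve this is realised on the degree-zero parts $A_{0}=\mathbf{k}[C]$, and on a smooth projective curve it comes from the unique smooth model of the invariant field, so in either case one obtains an isomorphism $\phi:C\to C'$. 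After replacing $\mathfrak{D}$ by $F_{\star}(\mathfrak{D})$ and $\mathfrak{D}'$ by $\phi^{\star}(\mathfrak{D}')$, I may assume $C=C'$, $\sigma=\sigma'$, and that $\Psi$ is a graded $\mathbf{k}(C)$-algebra automorphism of $\mathbf{k}(C)[M]$ fixing the degree-zero part. Such an automorphism acts on each graded piece by multiplication by some $u_{m}\in\mathbf{k}(C)^{\star}$, and multiplicativity forces $m\mapsto u_{m}$ to be a group homomorphism $M\to\mathbf{k}(C)^{\star}$.

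Finally, the condition $u_{m}\cdot H^{0}(C,\mathcal{O}_{C}(\lfloor\mathfrak{D}(m)\rfloor))=H^{0}(C,\mathcal{O}_{C}(\lfloor\mathfrak{D}'(m)\rfloor))$ must be turned into an equality of divisors. Viewing these global sections as invertible fractional ideals inside the constant field $\mathbf{k}(C)$ (using affineness, or the semiampleness and bigness conditions of $1.5$ to pass between sheaf and sections in the projective case), I obtain $\lfloor\mathfrak{D}'(m)\rfloor=\lfloor\mathfrak{D}(m)\rfloor-\operatorname{div}(u_{m})$ for every $m\in\sigma^{\vee}_{M}$. Choosing $v_{i}\in N$ dual to a basis $(e_{i})$ of $M$ and setting $f_{i}:=u_{e_{i}}^{-1}$, the linearity of $m\mapsto\operatorname{div}(u_{m})$ gives $-\operatorname{div}(u_{m})=\sum_{i}m(v_{i})\operatorname{div}(f_{i})$, so the identity holds at the level of floors. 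To remove the floors I would apply this to the multiples $km$, divide by $k$, and let $k\to\infty$: since $\tfrac{1}{k}\lfloor\mathfrak{D}(km)\rfloor\to\mathfrak{D}(m)$ coefficientwise and $\sum_{i}m(v_{i})\operatorname{div}(f_{i})$ is linear in $m$, the limit yields $\mathfrak{D}'(m)=\mathfrak{D}(m)+\sum_{i}m(v_{i})\operatorname{div}(f_{i})$, as required. The main obstacle I anticipate is this necessity direction, specifically the uniform recovery of $\phi$ and the justification that the global-section data pins down the rounded-down divisor in both the affine and projective cases; once $\Psi$ is known to be diagonal, the remaining bookkeeping and the limiting argument are routine.
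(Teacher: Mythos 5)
First, a point of reference: the paper does not prove this theorem at all — it quotes it as known, citing [AH, \S 8] and [Li, Theorem 1.4(3)] — so your proposal can only be compared with the standard argument of those references, whose outline it does follow: an explicit twisted isomorphism for sufficiency, and, for necessity, reduction to a graded automorphism of the fraction field of $\mathbf{k}(C)[M]$ fixing $K_{0}=\mathbf{k}(C)$, which is then diagonal with multiplier a group homomorphism $u\colon M\to\mathbf{k}(C)^{\star}$. Your sufficiency direction is complete and correct: since $v_{i}\in N$ and $m\in M$, the correction divisor $\sum_{i}m(v_{i})\operatorname{div}(f_{i})$ is integral, the floor distributes, and $g\chi^{m}\mapsto\phi^{\star}(g)\prod_{i}f_{i}^{\,m(v_{i})}\chi^{F^{\ast}m}$ is a well-defined multiplicative graded bijection.

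There is, however, a genuine gap in the necessity direction, in the elliptic case ($C$ projective). From the equality of subspaces $u_{m}\cdot H^{0}(C,\mathcal{O}_{C}(\lfloor\mathfrak{D}(m)\rfloor))=H^{0}(C,\mathcal{O}_{C}(\lfloor\mathfrak{D}'(m)\rfloor))$ you infer $\lfloor\mathfrak{D}'(m)\rfloor=\lfloor\mathfrak{D}(m)\rfloor-\operatorname{div}(u_{m})$ \emph{for every} $m\in\sigma^{\vee}_{M}$. On a projective curve the space of global sections does not determine the divisor: on an elliptic curve, $H^{0}(C,\mathcal{O}_{C})$ and $H^{0}(C,\mathcal{O}_{C}(P))$ are both the constants inside $\mathbf{k}(C)$, although $0\neq P$. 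The inference ``equal section spaces $\Rightarrow$ equal divisors'' is only valid when the linear systems have no fixed part, e.g.\ when the degrees are at least $2g$; bigness guarantees this for large multiples $km$ of $m$ in the \emph{relative interior} of $\sigma^{\vee}$, but it can fail for every multiple of a boundary vector $m$, where $\deg\mathfrak{D}(km)$ may stay equal to $0$ and the graded pieces may even vanish identically (cf.\ Remarque 1.12), giving no direct information. Consequently your limiting argument over $km$ proves the identity of $\mathbb{Q}$-divisors only for interior $m$, and the passage to all of $\sigma^{\vee}_{M}$ — which the statement requires — is missing. The repair is exactly what the paper's Lemme 2.2 is designed for: the coefficient of $\mathfrak{D}(m)+\sum_{i}m(v_{i})\operatorname{div}(f_{i})$ at a point $z$ is the support function of the translated polyhedron $\Delta_{z}+\sum_{i}\nu_{z}(f_{i})v_{i}$, so equality with $h_{\Delta'_{z}}$ on interior lattice points forces equality of the polyhedra, hence of the support functions on all of $\sigma^{\vee}$. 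With that extension step added, and with the degree-$\geq 2g$ base-point-freeness made explicit in place of the vague appeal to ``semiampleness and bigness'', your argument becomes a complete proof.
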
  
\paragraph{}
La proposition suivante montre que la fonction de support 
d'un $\sigma$-poly\`edre est lin\'eaire par mor\c ceaux.
La preuve de ce r\'esultat est ais\'ee et laiss\'ee aux lecteurs.
\begin{lemme}
Soit $\sigma\subset N_{\mathbb{Q}}$ un c\^one poly\'edral saillant, $\Delta\in\rm Pol_{\sigma}(\it N_{\mathbb{Q}})$ 
un poly\`edre et $S\subset \Delta$
une partie non vide. Notons $V(\Delta)$ l'ensemble des sommets de $\Delta$.
Alors $\Delta = \rm Conv(\it S\rm ) \it +\sigma$ si et seulement si 
pour tout $m\in\sigma^{\vee}$, $h_{\Delta}(m) = \min_{v\in S}m(v)$.
En particulier, pour tout $m\in\sigma^{\vee}$, on a $h_{\Delta}(m) = \min_{v\in V(\Delta )}m(v)$.
\end{lemme}
\paragraph{}
La terminologie suivante est classique pour les $\mathbb{C}^{\star}$-surfaces affines complexes [FZ]. Elle est 
introduite dans [Li] pour les $\mathbb{T}$-vari\'et\'es affines de complexit\'e $1$. 
\begin{definition}
Soit $X$ une vari\'et\'e affine. Une op\'eration alg\'ebrique de $\mathbb{T}$ de complexit\'e $1$ dans $X$ est dite 
\em elliptique \rm si 
l'alg\`ebre multigradu\'ee correspondante 
\begin{eqnarray*}
A = \mathbf{k}[X] = \bigoplus_{m\in M}A_{m}\,\,\,\,\rm avec \,\,\it A_{m} = 
\left\{f\in A\,|\,\forall t\in \mathbb{T},\,t\cdot f = \chi^{m}(t)f\right\}
\end{eqnarray*}
v\'erifie $A_{0} = \mathbf{k}$. Dans ce cas, on dit que l'alg\`ebre $M$-gradu\'ee $A$
est elliptique.
\end{definition} 
\begin{remarque} 
Consid\'erons $A = A[C,\mathfrak{D}]$. Alors $A$ est elliptique si
et seulement si $C$ est projective. L'ellipticit\'e de $A$ donne des 
contraintes g\'eom\'etriques sur la vari\'et\'e $X = \rm Spec\,\it A$. 
En effet, si $C$ est affine alors $X$ est toro\"idale [KKMS] et n'admet donc
que des singularit\'es toriques. 
Tandis que lorsque $C$ est projective de genre $g\geq 1$, la vari\'et\'e $X$ 
a au moins une singularit\'e qui n'est pas rationnelle [LS, Propositions 5.1, 5.5].     
\end{remarque}
La proposition suivante peut \^etre vue comme une g\'en\'eralisation de [FZ, Lemma 1.3(a)].
\begin{proposition}
Soit $X$ une $\mathbb{T}$-vari\'et\'e affine de complexit\'e $1$ et notons $\sigma\subset N_{\mathbb{Q}}$ 
le dual du c\^one des poids de 
l'alg\`ebre multigradu\'ee
\begin{eqnarray*}
A = \mathbf{k}[X] = \bigoplus_{m\in M}A_{m}\,\,\,\rm avec \,\,\,\it A_{m} = 
\left\{f\in A\,|\,\forall t\in T,\,t\cdot f = \chi^{m}(t)f\right\}
\end{eqnarray*}
obtenue par l'op\'eration de $\mathbb{T}$. Les assertions suivantes sont vraies.
\begin{enumerate}
\item[\rm (i)] Si l'op\'eration de $\mathbb{T}$ n'est pas elliptique alors 
le semi-groupe des poids de $\mathbf{k}[X]$ est $\sigma^{\vee}_{M}$ 
et pour tout $m\in\sigma^{\vee}_{M}$, le $A_{0}$-module $A_{m}$ est localement libre de rang $1$;
\item[\rm (ii)] Si l'op\'eration est elliptique alors $\sigma\neq\{0\}$; 
\item[\rm (iii)] L'intersection du sous-ensemble
\begin{eqnarray*}
L = \{m\in\sigma^{\vee}_{M}\,|\, A_{m} = \{0\}\}\subset M_{\mathbb{Q}}
\end{eqnarray*} 
avec toute droite vectorielle rencontrant l'int\'erieur relatif de $\sigma^{\vee}$ est finie.
\end{enumerate}
\end{proposition}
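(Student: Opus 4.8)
The plan is to transport everything to the Altmann--Hausen presentation and then to reduce the claim to a statement about degrees of $\mathbb{Q}$-divisors on $C$. Since $X$ is a $\mathbb{T}$-variety the algebra $A=\mathbf{k}[X]$ is normal, so Theorem~$1.7$(ii) furnishes a smooth curve $C$ and a proper $\sigma$-polyhedral divisor $\mathfrak{D}$ with $A=A[C,\mathfrak{D}]$. In particular $A_{m}=H^{0}(C,\mathcal{O}_{C}(\lfloor\mathfrak{D}(m)\rfloor))\chi^{m}$, so for $m\in\sigma^{\vee}_{M}$ we have $A_{m}\neq\{0\}$ exactly when $\lfloor\mathfrak{D}(m)\rfloor$ is linearly equivalent to an effective divisor on $C$. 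The whole question is thereby converted into one about the non-vanishing of $H^{0}$ of the integral part of $\mathfrak{D}(m)$ as $m$ runs through the lattice points of the given line.

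Next I would normalise the line. Choose a primitive $w\in M$ spanning it; replacing $w$ by $-w$ if necessary, we may assume $w\in\operatorname{relint}(\sigma^{\vee})$, so that the lattice points of the line lying in $\sigma^{\vee}$ are exactly the $kw$ with $k\geq 0$, and it is these that must be controlled. The crucial elementary fact is that each support function $h_{\Delta_{z}}(m)=\min_{v\in\Delta_{z}}m(v)$ of Rappel~$1.4$ is positively homogeneous of degree one, whence
\[
\mathfrak{D}(kw)=k\,\mathfrak{D}(w)\qquad(k\geq 0),
\]
and the problem becomes the study of $H^{0}(C,\mathcal{O}_{C}(\lfloor k\,\mathfrak{D}(w)\rfloor))$ as $k\to\infty$.

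The argument now splits along Remarque~$1.12$. If the action is not elliptic then $C$ is affine, and on a smooth affine curve $H^{0}(C,\mathcal{O}_{C}(D))$ is a nonzero fractional ideal of $\mathbf{k}[C]$ for every divisor $D$ (one always finds a regular function vanishing to high enough order at the prescribed points); thus $A_{m}\neq\{0\}$ for all $m\in\sigma^{\vee}_{M}$, the exceptional set is empty, and one recovers en passant the determination of the weight semigroup in (i). If the action is elliptic then $C$ is projective; since $w\in\operatorname{relint}(\sigma^{\vee})$, properness forces $\mathfrak{D}(w)$ to be big, i.e. $\deg\mathfrak{D}(w)>0$ (Rappel~$1.4$ and~$1.5$(i)). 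As $\operatorname{supp}(\mathfrak{D})$ is finite the rounding is bounded, so $\deg\lfloor k\,\mathfrak{D}(w)\rfloor\geq k\deg\mathfrak{D}(w)-\#\operatorname{supp}(\mathfrak{D})$ tends to $+\infty$; once it exceeds $2g-2$, Riemann--Roch yields $h^{0}>0$ and hence $A_{kw}\neq\{0\}$. In either case only finitely many lattice points $kw$ of the line fail the condition, which is the finiteness asserted in (iii).

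The main obstacle is precisely the control of the integral part $\lfloor k\,\mathfrak{D}(w)\rfloor$: a priori the successive roundings could spoil the linear growth of the degree, and it is here that one uses simultaneously the finiteness of $\operatorname{supp}(\mathfrak{D})$ built into the notion of a polyhedral divisor and the strict positivity of $\deg\mathfrak{D}(w)$ coming from bigness on the relative interior. The remaining points are bookkeeping: fixing the sign of $w$ so that $w\in\operatorname{relint}(\sigma^{\vee})$, and noting that the opposite ray contributes nothing, since for $w\in\operatorname{relint}(\sigma^{\vee})$ one has $-w\notin\sigma^{\vee}$ as soon as $\sigma^{\vee}$ has a nontrivial pointed part; the purely linear case $\sigma=\{0\}$ is hyperbolic, hence non-elliptic by (ii), and is already covered by the affine-curve argument.
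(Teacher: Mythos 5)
Your treatment of (i) and (iii) follows essentially the paper's own route: pass to the Altmann--Hausen presentation $A=A[C,\mathfrak{D}]$ via Th\'eor\`eme 1.7, reduce (iii) to the growth of $\deg\lfloor k\,\mathfrak{D}(w)\rfloor$ along a ray through the relative interior, and conclude by Riemann--Roch; your bookkeeping is in fact slightly cleaner, since you invoke the exact homogeneity $\mathfrak{D}(kw)=k\,\mathfrak{D}(w)$ together with the rounding bound $\deg\lfloor k\,\mathfrak{D}(w)\rfloor\geq k\deg\mathfrak{D}(w)-\#\operatorname{supp}(\mathfrak{D})$, where the paper instead chooses $r_{0}$ making $\mathfrak{D}(r_{0}w)$ integral and uses $\lfloor\mathfrak{D}((r_{0}+r)w)\rfloor=\mathfrak{D}(r_{0}w)+\lfloor\mathfrak{D}(rw)\rfloor$. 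You also silently read $L$ as the set of $m\in\sigma^{\vee}_{M}$ with $A_{m}=\{0\}$ ("only finitely many lattice points fail the condition"); this is the right reading — with the "$\neq$" of the printed statement the claim would be false already in the non-elliptic case, and both the paper's own proof and Remarque 1.14 confirm the intended interpretation — but it deserves an explicit word.

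There is, however, a genuine gap: assertion (ii) is never proved. You use it — "the purely linear case $\sigma=\{0\}$ is hyperbolic, hence non-elliptic by (ii)" — to dismiss the case $\sigma=\{0\}$ in the proof of (iii), and this is circular, since (ii) is one of the three assertions of the proposition; nothing in your text rules out an elliptic action with $\sigma=\{0\}$, and "hyperbolic" is not available as an independent fact here (for a general torus it is exactly the contrapositive of (ii)). The missing argument is short and is the one the paper gives: if the action is elliptic then $C$ is projective; if moreover $\sigma=\{0\}$ then $\sigma^{\vee}=M_{\mathbb{Q}}$ and properness (bigness on the relative interior, which is now everything) yields some $m$ with $\deg\mathfrak{D}(m)>0$; the subadditivity $\mathfrak{D}(m)+\mathfrak{D}(-m)\leq\mathfrak{D}(0)=0$ of Rappel 1.4 then forces $\deg\mathfrak{D}(-m)<0$, contradicting the condition $\deg\mathfrak{D}(m')\geq 0$ for all $m'\in\sigma^{\vee}_{M}$ of Rappel 1.5(i). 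Inserting this — and, for completeness, the one-line remark that a nonzero fractional ideal over the Dedekind ring $\mathbf{k}[C]$ is locally free of rank $1$, which settles the second clause of (i) that your "fractional ideal" observation leaves implicit — closes the proof.
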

\begin{proof}
Si l'op\'eration de $\mathbb{T}$ n'est pas elliptique alors par le th\'eor\`eme $1.7$, on peut 
supposer qu'il existe une courbe alg\'ebrique affine lisse $C$ et un diviseur $\sigma$-poly\'edral propre $\mathfrak{D}$
sur $C$ tels que pour tout $m\in\sigma^{\vee}_{M}$, 
\begin{eqnarray}
A_{m} = H^{0}(C,\mathcal{O}_{C}(\lfloor\mathfrak{D}(m)\rfloor ))\chi^{m}\,\,\,\,\rm  et \it\,\,\,\, \mathbf{k}[X] = \bigoplus_{m\in\sigma^{\vee}_{M}}A_{m}.
\end{eqnarray}
Comme $C$ est affine, pour tout $m\in \sigma^{\vee}_{M}$, on a $A_{m}\neq \{0\}$. D'o\`u l'assertion $\rm (i)$. 

Si l'op\'eration est elliptique alors on
peut supposer que $\mathbf{k}[X]$ v\'erifie $(2)$ avec $C$ une courbe projective lisse de genre $g$. Si $\sigma = \{0\}$ alors 
$\sigma^{\vee} = M_{\mathbb{Q}}$. Pour tout $m\in \sigma^{\vee}_{M}$, on a par propret\'e: 
$\rm deg(\it \mathfrak{D}(m)\rm ) > 0$. L'\'egalit\'e $\mathfrak{D}(0) = 0$ donne une contradiction.
D'o\`u l'assertion $\rm (ii)$. 

Soit $m\in \sigma^{\vee}_{M}$ appartenant \`a l'int\'erieur relatif de $\sigma^{\vee}$.
Alors il existe $r_{0}\in\mathbb{Z}_{>0}$ tel que pour tout $z\in\rm supp(\it\mathfrak{D}\rm)$ et 
tout sommet $v$ de $\Delta_{z}$, $r_{0}v\in N$. Ainsi, par le lemme $1.10$,
\begin{eqnarray}
\mathfrak{D}(r_{0}m) = \lfloor\mathfrak{D}(r_{0}m)\rm )\rfloor.
\end{eqnarray}
Par la propret\'e de $\mathfrak{D}$, on peut supposer que 
\begin{eqnarray*}
\rm deg(\it \mathfrak{D}(r_{\rm 0\it }m)\rm )\it  = \rm deg(\it \lfloor\mathfrak{D}(r_{\rm 0 \it}m)\rfloor\rm ) >\it  d +g-\rm 1
\end{eqnarray*}
o\`u $d$ est le cardinal de $\rm supp(\it\mathfrak{D}\rm)$. Donc pour tout $r\in\mathbb{N}$,
\begin{eqnarray}
\rm deg(\it \lfloor\mathfrak{D}((r_{\rm 0 \it}+r)m)\rfloor\rm )\it = \rm deg(\it \lfloor\mathfrak{D}(r_{\rm 0 \it}m)\rfloor\rm )\it + \rm deg(\it \lfloor\mathfrak{D}(rm)\rfloor\rm )\geq deg(\it \lfloor\mathfrak{D}(r_{\rm 0\it}m)\rfloor\rm )\it - d \rm >\it g-\rm 1.
\end{eqnarray}
Comme $\sigma\neq\{0\}$, la demi-droite $\mathbb{Q}_{\leq 0}\cdot m$ ne rencontre
$\sigma^{\vee}$ qu'en l'origine [CLS, Exercice 1.2.4].
Par le th\'eor\`eme de Riemann-Roch, $(4)$ donne l'inclusion 
\begin{eqnarray*}
 \mathbb{Q}\cdot m\,\cap\, L\subset \{0,m,2m,\ldots,(r_{0}-1)m\}\,.
\end{eqnarray*}
D'o\`u le r\'esultat.
\end{proof}
\begin{remarque}
Notons $\sigma = \mathbb{Q}_{\geq 0}^{2}$.
L'exemple du diviseur poly\'edral propre 
\begin{eqnarray*}
\mathfrak{D} = \Delta_{0}\cdot \{0\}+ \Delta_{1}\cdot\{1\} + \Delta_{\infty}\cdot\{\infty\},\,\,\,
\Delta_{0} = \left(\frac{1}{2},0\right) + \sigma,\,\,\, \Delta_{1} = \left(-\frac{1}{2},0\right) + \sigma,\,\,\,\Delta_{\infty} = [(1,0),(0,1)]+\sigma
\end{eqnarray*} 
sur $\mathbb{P}^{1}$
montre qu'en g\'en\'eral, il existe des demi-droites vectorielles contenues dans le bord de $\sigma^{\vee}$ qui 
rencontrent $L$ avec une infinit\'e de points. En effet, dans cet exemple, pour tout $r\in\mathbb{N}$, on a
\begin{eqnarray*}
H^{0}(\mathbb{P}^{1},\mathcal{O}_{\mathbb{P}^{1}}(\lfloor\mathfrak{D}(2r+1,0)\rfloor ))\chi^{(2r+1,0)} = \{0\}.
\end{eqnarray*}
\end{remarque}
\section{Normalisation des alg\`ebres affines multigradu\'ees de complexit\'e un}
Le but de cette section est de d\'ecrire explicitement la normalisation
des alg\`ebres affines multigradu\'ees de complexit\'e $1$.

Dans le cas de la complexit\'e $0$, toute alg\`ebre affine $M$-gradu\'ee
est r\'ealis\'ee comme sous-alg\`ebre $\mathbb{T}$-stable 
de $\mathbf{k}[M]$. La normalisation de $A$ est d\'etermin\'ee par son
c\^one des poids [Ho].
Par analogie, toute alg\`ebre affine $M$-gradu\'ee de complexit\'e $1$ est plong\'ee dans une 
alg\`ebre $\mathbf{k}(C)[M]$ o\`u $C$ est une courbe alg\'ebrique lisse. 
Nous rappelons ceci dans le paragraphe suivant. 
\begin{rappel}
Soit $A = \bigoplus_{m\in M}A_{m}$ 
une alg\`ebre int\`egre $M$-gradu\'ee de type fini sur $\mathbf{k}$.
Notons $K$ son corps des fractions. On suppose que $\rm tr.deg_{\it \mathbf{k}}\,\it K\rm^{\it \mathbb{T}} = 1$. 
Sans perte de g\'en\'eralit\'e, on peut supposer que la $M$-graduation de $A$ 
est fid\`ele. Alors pour tout vecteur $m\in M$,
\begin{eqnarray*}
K_{m} := \{f/g\,|\, f\in A_{m+e},\,g\in A_{e},\,g\neq 0\}\subset K
\end{eqnarray*}
est un sous-espace vectoriel de dimension $1$ sur $K_{0} = K^{\mathbb{T}}$. Le choix d'une base de $M$ nous permet 
de construire une famille $(\chi^{m})_{m\in M}$ d'\'el\'ements de $K^{\star}$ v\'erifiant pour tous 
$m,m'\in M$,
\begin{eqnarray*} 
K_{m} = K_{0}\,\chi^{m}\,\,\,\,\rm et \,\,\,\,\it\chi^{m}\cdot\chi^{m'} = \chi^{m+m'}.
\end{eqnarray*}
Quitte \`a remplacer $A_{0}$ par $\bar{A_{0}}$, on peut 
supposer que $A_{0}$ est normale. Notons que l'alg\`ebre $A_{0}$ est de type fini sur $\mathbf{k}$. 
Soit $\widetilde{C}$ la courbe compl\`ete lisse sur $\mathbf{k}$ obtenue \`a partir de l'ensemble 
des valuations discr\`etes de $K_{0}/\mathbf{k}$, de sorte que
$K_{0} = \mathbf{k}(\widetilde{C})$. 

Si l'op\'eration de $\mathbb{T}$ dans $X = \rm Spec\,\it A$ n'est pas elliptique alors $K_{0}$ est
le corps des fractions de $A_{0}$. En effet, si $b\in K_{0}$ alors $b$ est un \'el\'ement alg\'ebrique sur
$\rm Frac\,\it A_{\rm 0}$. Il existe donc $a\in A_{0}$ non nul tel que $ab$ soit entier sur $A_{0}$. Par
normalit\'e de $A_{0}$, on a $ab \in \bar{A}\cap K_{0} = A_{0}$ et donc $K_{0} \subset \rm Frac\,\it A_{\rm 0}$.
L'inclusion r\'eciproque est imm\'ediate.
Dans tous les cas, il existe un unique ouvert non vide 
$C\subset \widetilde{C}$ tel que 
\begin{eqnarray*}
A_{0} = \mathbf{k}[C]\,\,\,\,\rm et \,\,\,\,\it A\subset \bigoplus_{m\in M}K_{m} = 
\mathbf{k}(C)[M]\,.
\end{eqnarray*}
Par le th\'eor\`eme $1.7$, 
l'\'egalit\'e $K = \rm Frac\,\it \mathbf{k}(C)[M]$ 
implique que $\bar{A} = A[C,\mathfrak{D}]$, pour un unique
diviseur poly\'edral propre $\mathfrak{D} = \sum_{z\in C}\Delta_{z}\cdot z$. 
\paragraph{}
Fixons un syst\`eme de g\'en\'erateurs homog\`enes 
\begin{eqnarray*}
A = \mathbf{k}[C][f_{1}\chi^{m_{1}},\ldots,f_{r}\chi^{m_{r}}]
\end{eqnarray*}
avec $f_{1},\ldots,f_{r}$ des fonctions rationnelles non nulles sur $C$ et $m_{1},\ldots,m_{r}$ des \'el\'ements de $M$.
Il s'agit de d\'eterminer les poly\`edres $\Delta_{z}$ en fonction de  
$((f_{1},m_{1}),\ldots,(f_{r},m_{r}))$.  Dans [FZ], on donne la pr\'esentation $D.P.D.$ de $\bar{A}$ 
pour le cas des surfaces affines complexes avec op\'erations paraboliques et hyperboliques de $\mathbb{C}^{\star}$. 
Nous rappelons ces r\'esultats dans le corollaire $2.7$.
\end{rappel}
Le lemme \'el\'ementaire suivant sera utilis\'e dans la preuve du th\'eor\`eme $2.4$.
La d\'emonstration de ce r\'esultat est laiss\'ee aux lecteurs.
\begin{lemme}
Soit $\sigma\subset N_{\mathbb{Q}}$ un c\^one poly\'edral saillant et $\Delta_{1},\Delta_{2}\in\rm Pol_{\sigma}(\it N_{\mathbb{Q}})$
des $\sigma$-poly\`edres. 
Alors $\Delta_{1} = \Delta_{2}$ si et seulement si pour tout $m\in\sigma^{\vee}_{M}$ appartenant \`a 
l'int\'erieur relatif de $\sigma^{\vee}$, $h_{\Delta_{1}}(m) = h_{\Delta_{2}}(m)$.
\end{lemme}

\begin{notation}
Soient $C$ une courbe alg\'ebrique lisse et $f = (f_{1}\chi^{m_{1}},\ldots,f_{r}\chi^{m_{r}})$ 
un $r$-uplet d'\'el\'ements homog\`enes de $\mathbf{k}(C)[M]$ 
tels que $\sum_{i = 1}^{r}\mathbb{Q}\,m_{i} = M_{\mathbb{Q}}$. Posons 
$\sigma \subset N_{\mathbb{Q}}$ le c\^one poly\'edral saillant dual de 
$\sum_{i = 1}^{r}\mathbb{Q}_{\geq 0}\,m_{i}$ et pour tout $z\in C$, consid\'erons 
$\Delta_{z}[f]\subset N_{\mathbb{Q}}$ le $\sigma$-poly\`edre d\'efini par les in\'egalit\'es 
\begin{eqnarray*}
m_{i}\geq -\nu_{z}(f_{i}),\,\,\,\,\,i= 1,2,\ldots,r.
\end{eqnarray*}
On note $\mathfrak{D}[f]$ le diviseur $\sigma$-poly\'edral 
$\sum_{z\in C}\Delta_{z}[f]\,.\,z$.
\end{notation}
Dans le th\'eor\`eme suivant, nous d\'ecrivons la normalisation de 
l'alg\`ebre $A$ donn\'ee dans 2.1.
\begin{theorem}
Soient $C$ une courbe alg\'ebrique lisse et
\begin{eqnarray*}
A = \mathbf{k}[C][f_{1}\chi^{m_{1}},\ldots,f_{r}\chi^{m_{r}}] \subset 
\mathbf{k}(C)[M]
\end{eqnarray*}
une sous-alg\`ebre $\mathbb{T}$-stable engendr\'ee par des \'el\'ements homog\`enes 
$f_{1}\chi^{m_{1}},\dots,f_{r}\chi^{m_{r}}$. 
Supposons que $A$ a le m\^eme corps des fractions que $\mathbf{k}(C)[M]$ 
et soit $\bar{A}$ la normalisation de $A$, 
vue comme sous-alg\`ebre de $\mathbf{k}(C)[M]$. Consid\'erons $\mathfrak{D}[f]$ d\'efini
comme dans $2.3$. Alors 
le dual $\sigma$ du c\^one poly\'edral 
$\sum_{i = 1}^{r}\mathbb{Q}_{\geq 0}m_{i}$ est saillant et $\mathfrak{D}[f]$ est 
l'unique diviseur $\sigma$-poly\'edral propre sur $C$ v\'erifiant $\bar{A} = A[C,\mathfrak{D}[f]]$.   
\end{theorem}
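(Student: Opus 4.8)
The plan is to prove the equality $\bar A = A[C,\mathfrak D[f]]$ directly, by a double inclusion, and then to deduce the remaining assertions (that $\sigma$ is saillant, and that $\mathfrak D[f]$ is proper and unique) from the results of Section~$1$.

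I would first record that $\sigma$ is saillant. Since $\mathrm{Frac}\,A = \mathrm{Frac}\,\mathbf k(C)[M]$ and each $f_i\in\mathbf k(C)^{\star}$ already lies in $A$, the monomials $\chi^{m_i}$ belong to $\mathrm{Frac}\,A$, which forces the $m_i$ to engendrer $M_{\mathbb Q}$; hence $\omega:=\sum_i\mathbb Q_{\ge 0}m_i$ is of full dimension $n$ and its dual $\sigma$ is saillant. Next I would compute the support function of the coefficients of $\mathfrak D[f]$. For $m\in\sigma^{\vee}$ linear programming duality gives
\[
h_{\Delta_z[f]}(m)=\min_{v\in\Delta_z[f]}\langle m,v\rangle=\max\Bigl\{-\textstyle\sum_i\lambda_i\nu_z(f_i)\ :\ \lambda_i\ge 0,\ \textstyle\sum_i\lambda_i m_i=m\Bigr\},
\]
the maximum being attained at one of the finitely many sommets of the polytope $\{\lambda\ge 0:\sum_i\lambda_i m_i=m\}$, which do not depend on $z$. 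In particular $h_{\Delta_z[f]}(m_i)\ge -\nu_z(f_i)$ for every $z$, i.e. $f_i\in H^{0}(C,\mathcal O_C(\lfloor\mathfrak D[f](m_i)\rfloor))$; together with $A[C,\mathfrak D[f]]_0=\mathbf k[C]$ this yields $A\subset A[C,\mathfrak D[f]]$. Along the way one checks that every $\Delta_z[f]$ is a nonempty $\sigma$-poly\`edre equal to $\sigma$ for all but finitely many $z$, so that $\mathfrak D[f]$ is a genuine diviseur $\sigma$-poly\'edral.

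The substance is the reverse inclusion $A[C,\mathfrak D[f]]\subset\bar A$, namely that every homogeneous $g\chi^{m}$ with $\nu_z(g)\ge -h_{\Delta_z[f]}(m)$ for all $z$ is integral over $A$. When $C$ is affine I would argue elementarily: choose $N\in\mathbb Z_{>0}$ clearing the denominators of all the sommets above, and for each sommet $\lambda$ set $\phi_{\lambda}=\prod_i f_i^{N\lambda_i}$, so that $\phi_{\lambda}\chi^{Nm}=\prod_i(f_i\chi^{m_i})^{N\lambda_i}\in A_{Nm}$ and $\nu_z(\phi_{\lambda})\ge -Nh_{\Delta_z[f]}(m)$, with equality whenever $\lambda$ is optimal at $z$. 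Hence the id\'eal fractionnaire $\mathfrak a=\sum_{\lambda}\mathbf k[C]\,\phi_{\lambda}$ satisfies $\nu_z(\mathfrak a)=-Nh_{\Delta_z[f]}(m)$ for every $z$, while $\nu_z(g^{N})=N\nu_z(g)\ge -Nh_{\Delta_z[f]}(m)$. Since $\mathbf k[C]$ is un anneau de Dedekind, belonging to $\mathfrak a$ is a pointwise condition on orders, so $g^{N}\in\mathfrak a$ and therefore $(g\chi^{m})^{N}=g^{N}\chi^{Nm}\in A$; the monic relation $T^{N}-(g\chi^m)^N=0$ then shows $g\chi^m$ is integral. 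For $C$ affine, properness of $\mathfrak D[f]$ is automatic by $1.5$(ii), so $A[C,\mathfrak D[f]]$ is normal by Theorem~$1.7$(i) and the inclusion $A\subset A[C,\mathfrak D[f]]$ gives $\bar A\subset A[C,\mathfrak D[f]]$; combined with the above this proves $\bar A=A[C,\mathfrak D[f]]$.

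The hard part will be the elliptic case $C$ projective, where $\mathbf k[C]=\mathbf k$, the graded pieces are the finite-dimensional spaces $H^{0}(C,\mathcal O_C(\lfloor\mathfrak D[f](m)\rfloor))$, and membership can no longer be read off orders alone. Here I would instead use the valuative criterion for integral closure together with the fact that, for a $\mathbb T$-stable algebra, integrality of a homogeneous element is detected by the valuations $\mathbb T$-invariantes of $K$ (the integral closure being gradu\'ee). Following the theory of [AH],[Ti], these are of two kinds: the valuations $\mathfrak v_{z,v}(h\chi^{m'})=\nu_z(h)+\langle m',v\rangle$ attach\'ees to $z\in C$ and $v\in N_{\mathbb Q}$, nonnegative on $A$ exactly when $v\in\Delta_z[f]$, and the valuations horizontales attach\'ees aux rayons of $\sigma$, nonnegative on $A$ exactly on $\sigma$. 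Testing $g\chi^{m}$ against all of them reproduces simultaneously the conditions $\nu_z(g)\ge -h_{\Delta_z[f]}(m)$ for all $z$ and $m\in\sigma^{\vee}$, that is $g\chi^m\in A[C,\mathfrak D[f]]$; this argument is uniform in $C$. Finally, once $\bar A=A[C,\mathfrak D[f]]$ is established, Rappel~$2.1$ and the uniqueness in Theorem~$1.7$(ii) exhibit $\mathfrak D[f]$ as the unique diviseur $\sigma$-poly\'edral propre on $C$ with this property; in particular $\mathfrak D[f]$ is proper, which completes the proof.
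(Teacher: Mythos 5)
Your treatment of the non-elliptic case (affine $C$) is correct and genuinely different from the paper's: the paper never proves the inclusion $A[C,\mathfrak{D}[f]]\subseteq\bar{A}$ directly, whereas your fractional-ideal argument over the Dedekind ring $\mathbf{k}[C]$ (the functions $\phi_{\lambda}$ attached to the vertices of $\{\lambda\geq 0:\sum_{i}\lambda_{i}m_{i}=m\}$, whose $\mathbf{k}[C]$-span has exact order $-Nh_{\Delta_{z}[f]}(m)$ at every $z$, so that $(g\chi^{m})^{N}\in A$) gives an elementary, self-contained proof of it. The paper instead argues uniformly in $C$: it applies Theorem $1.7$(ii) to $\bar{A}$ (which is $M$-graded by [HS, Theorem 2.3.2]) to obtain $\bar{A}=A[C,\mathfrak{D}]$ for some proper $\mathfrak{D}$, gets $\bar{A}\subseteq A[C,\mathfrak{D}[f]]$ from the inclusion $A\subseteq A[C,\mathfrak{D}[f]]$ (which you also prove) and the normality of $A[C,\mathfrak{D}[f]]$ as an intersection of discrete valuation rings [De, \S 2.7], and then shows $\mathfrak{D}=\mathfrak{D}[f]$ by comparing evaluations in both directions, concluding with Lemma $2.2$.

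In the elliptic case your sketch has two gaps. First, the assertion that integral dependence of a homogeneous element on the $\mathbb{T}$-stable algebra $A$ is detected by invariant valuations alone is true, but gradedness of $\bar{A}$ is not the reason: one needs the homogenization lemma of Luna--Vust/Timashev theory (for any valuation $\nu$ nonnegative on $A$, the function $\bar{\nu}(h):=\min_{m}\nu(h_{m})$ is again a valuation, still nonnegative on $A$, and equal to $\nu$ on homogeneous elements), together with the classification of invariant valuations of $\mathrm{Frac}\,\mathbf{k}(C)[M]$ into vertical and horizontal ones; neither is proved in your sketch, although both can be extracted from [Ti]. Second, and more seriously, the final deduction of properness and uniqueness is a non sequitur. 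Theorem $1.7$(ii) produces a unique \emph{proper} $\mathfrak{D}'$ with $\bar{A}=A[C,\mathfrak{D}']$; to identify $\mathfrak{D}[f]$ with $\mathfrak{D}'$ (and thereby conclude that $\mathfrak{D}[f]$ is proper) you would need the assignment $\mathfrak{D}\mapsto A[C,\mathfrak{D}]$ to be injective on arbitrary, possibly non-proper, polyhedral divisors. That fails over a projective curve: on $\mathbb{P}^{1}$, for instance, any two divisors all of whose evaluations have negative degree yield the same trivial graded pieces $H^{0}=0$. Closing exactly this point is what the second half of the paper's proof does: from the inclusion of section spaces, Riemann--Roch and [Ha, IV, Lemma 1.2] it deduces $\deg\mathfrak{D}[f](m)>0$ for $m$ in the relative interior of $\sigma^{\vee}$, hence $\mathfrak{D}[f](m)$ is semi-ample, then [AH, Lemma 9.1] gives $\mathfrak{D}[f](m)\geq\mathfrak{D}(m)$, the reverse inequality comes from $\Delta_{z}\subseteq\Delta_{z}[f]$, and Lemma $2.2$ concludes. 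Your proposal contains no substitute for this Riemann--Roch/semi-ampleness step, so in the elliptic case the properness of $\mathfrak{D}[f]$ (and with it the uniqueness assertion of the theorem) remains unproved.
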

\begin{proof} 
D'apr\`es [HS, Theorem 2.3.2], la sous-alg\`ebre 
$\bar{A}\subset \mathbf{k}(C)[M]$ est 
$M$-gradu\'ee. Puisque $A$ a m\^eme corps des fractions
que $\mathbf{k}(C)[M]$, le c\^one $\sigma$ est saillant. Par le th\'eor\`eme $1.7$, 
il existe un unique diviseur $\sigma$-poly\'edral propre $\mathfrak{D}$ sur $C$ tel que 
$\bar{A} = A[C,\mathfrak{D}]$.
Posons $B := A[C,\mathfrak{D}[f]]$. Alors pour tout $i\in \rm\{1\it ,\ldots,r\rm\}$ et tout $z\in C$,
on a l'in\'egalit\'e
\begin{eqnarray*}
h_{\Delta_{z}[f]}(m_{i})\geq -\nu_{z}(f_{i}),
\end{eqnarray*}
de sorte que pour tout $i\in \rm\{1\it ,\ldots,r\rm\}$,
\begin{eqnarray*}
\mathfrak{D}[f](m_{i}) = \sum_{z\in C}h_{\Delta_{z}[f]}(m_{i})\,.\,z\geq -\rm div(\it f_{i}\rm ).
\end{eqnarray*}
On obtient l'inclusion $A\subset B$. Comme $B$ est l'intersection d'anneaux de valuations discr\`etes
de $\rm Frac\,\it B/\mathbf{k}$ [De, $\S 2.7$], l'alg\`ebre $B$ est normale. 
D'o\`u $\bar{A} = A[\mathfrak{D}]\subset B$. 

Montrons l'\'egalit\'e $\mathfrak{D} = \mathfrak{D}[f]$. Supposons que $C$ est une courbe alg\'ebrique projective lisse de genre $g$. 
Pour tout $m'\in\sigma^{\vee}_{M}$, on a 
\begin{eqnarray}
H^{0}(C,\mathcal{O}_{C}(\lfloor \mathfrak{D}(m')\rfloor ))\subset H^{0}(C,\mathcal{O}_{C}(\lfloor \mathfrak{D}[f](m')\rfloor )).
\end{eqnarray}
Soit $m\in\sigma^{\vee}_{M}$ appartenant \`a l'int\'erieur relatif de $\sigma^{\vee}$. Prenons $s\in\mathbb{Z}_{>0}$ tels que $s\mathfrak{D}(m)$ et $s\mathfrak{D}[f](m)$
soient des diviseurs de Weil \`a coefficients entiers. Alors par $(5)$, par le th\'eor\`eme de Riemann-Roch et puisque $\mathfrak{D}$ est propre, il existe
$r_{0}\in\mathbb{Z}_{>0}$ tel que pour tout entier $r\geq r_{0}$,
\begin{eqnarray*}
h^{0}(C,\mathcal{O}_{C}(rs\mathfrak{D}[f](m)))\geq r\rm deg(\it s\mathfrak{D}(m)\rm) + 1 - \it g \rm > 1.
\end{eqnarray*}
D'apr\`es [Ha, IV, Lemma 1.2], pour tout entier $r\geq r_{0}$, $\rm deg(\it rs\mathfrak{D}[f](m)\rm) > 0$ et donc $\mathfrak{D}[f](m)$ est semi-ample. Donc par 
[AH, Lemma 9.1], on a $\mathfrak{D}[f](m)\geq \mathfrak{D}(m)$. 
Cette in\'egalit\'e est aussi vraie lorsque $C$ est une courbe alg\'ebrique affine lisse. 
Revenons \`a l'hypoth\`ese o\`u $C$ est une courbe alg\'ebrique lisse quelconque. 
Les in\'egalit\'es 
\begin{eqnarray*}
\mathfrak{D}(m_{i}) + \rm div(\it f_{i}\rm )\geq 0,\,\,\,\it i\rm  = 1\it ,\ldots,r,
\end{eqnarray*}
entra\^inent que pour tout $z\in C$, $\Delta_{z}\subset \Delta_{z}[f]$. 
Donc pour tout $m\in\sigma^{\vee}$, $\mathfrak{D}(m)\geq\mathfrak{D}[f](m)$. 
Par le lemme $2.2$, on conclut que $\mathfrak{D} = \mathfrak{D}[f]$.
\end{proof}
L'exemple qui suit illustre comment on peut \'etablir la normalit\'e d'une alg\`ebre donn\'ee
\`a partir d'un syst\`eme de g\'en\'erateurs.
\begin{exemple}
Soit $z$ une variable et soit $\mathbb{T}$ le tore $(\mathbf{k}^{\star})^{2}$. Consid\'erons 
la sous-alg\`ebre $\mathbb{T}$-stable 
\begin{eqnarray*}
A = \mathbf{k}\left[\frac{z}{z-1}\chi^{(2,0)},\chi^{(0,1)},z\chi^{(2,2)}, 
\frac{z^{2}}{z-1}\chi^{(3,2)}\right]\subset \mathbf{k}(z)[\mathbb{Z}^{2}].
\end{eqnarray*}
Alors on a les \'egalit\'es $A^{\mathbb{T}} = \mathbf{k}$ et 
$(\rm Frac\it\,A)^{\mathbb{T}} = \mathbf{k}(z)$. Le c\^one des poids $\sigma^{\vee}\subset\mathbb{Q}^{2}$ 
de $A$ est le quadrant positif. Par le th\'eor\`eme $2.4$, la normalisation $\bar{A}$ de $A$ est \'egale \`a 
$A[\mathbb{P}^{1},\mathfrak{D}]$ o\`u $\mathfrak{D} = \Delta_{0}(0) + \Delta_{1}(1) + \Delta_{\infty}(\infty)$ est 
le diviseur $\sigma$-poly\'edral propre sur $\mathbb{P}^{1}$ d\'efini par 
\begin{eqnarray*}
\Delta_{0} = -\left(\frac{1}{2},0\right)+\sigma,\,\,
\Delta_{1} = \left(\frac{1}{2},0\right)+\sigma,\,\,
\Delta_{\infty} = \left[\left(\frac{1}{2},0),(0,\frac{1}{2}\right)\right]+\sigma.
\end{eqnarray*}
Par exemple, le poly\`edre $\Delta_{0}$ est donn\'e par les in\'egalit\'es
\begin{eqnarray*}
2x\geq -\nu_{0}\left(\frac{z}{z-1}\right) = -1,\,\,\, y\geq -\nu_{0}(1) = 0,\,\,\,  2x+ 2y\geq -\nu_{0}(z) = -1, 
\end{eqnarray*}
\begin{eqnarray*}
3x +2y \geq -\nu_{0}\left(\frac{z^{2}}{z- 1}\right) = -2\,.
\end{eqnarray*}
Un calcul direct montre qu'en fait $A = A[\mathbb{P}^{1},\mathfrak{D}]$. Si l'on pose 
\begin{eqnarray*}
t_{1}:= \frac{z}{z-1}\chi^{(2,0)},\,\,\,t_{2} := \chi^{(0,1)},\,\,\, t_{3} := z\chi^{(2,2)},\,\,\,
t_{4} := \frac{z^{2}}{z-1}\chi^{(3,2)},
\end{eqnarray*}
alors les fonctions $t_{1},t_{2},t_{3},t_{4}$ v\'erifient la relation irr\'eductible
$t_{4}^{2} - t_{1}^{2}t_{2}^{2}t_{3} - t_{1}t_{3}^{2} = 0$. On conclut que 
l'hypersurface $V(x_{4}^{2} - x_{1}^{2}x_{2}^{2}x_{3} - x_{1}x_{3}^{2})\subset \mathbb{A}^{4}$ est normale.
\end{exemple}
Rappelons que pour une vari\'et\'e alg\'ebrique affine $X$, on note $\rm SAut\,\it X$ le sous-groupe des automorphismes 
de $X$ engendr\'e par la r\'eunion des images des op\'erations alg\'ebriques de $\mathbb{G}_{a}$ dans $X$. 
Pour plus d'informations voir [AKZ] et [AFKKZ] o\`u la notion de vari\'et\'e flexible est introduite. 
L'exemple suivant est inspir\'e de 
[LS, Example 1.1]. Il correspond au cas de $d = 3$ et de $e = 2$.
\begin{exemple}
Soient $d,e\geq 2$ des entiers tels que $d + 1 = e^{2}$. Notons
\begin{eqnarray*}
\mathfrak{D}_{d,e} = \Delta_{0}^{d,e}\cdot\{0\} + \Delta_{1}^{d,e}\cdot\{1\} + \Delta_{\infty}^{d,e}\cdot\{\infty\} 
\end{eqnarray*}
le diviseur poly\'edral propre sur $\mathbb{P}^{1}$ avec 
\begin{eqnarray*}
\Delta_{0}^{d,e} = [(1,0),(1,1)] + \sigma_{de},\,\, \Delta_{1}^{d,e} = \left(-\frac{1}{e},0\right) + \sigma_{de},\,\,
\Delta_{\infty}^{d,e} = \left(\frac{e}{d} - 1,0\right) + \sigma_{de},
\end{eqnarray*}
o\`u $\sigma_{de}$ est le c\^one de $\mathbb{Q}^{2}$ engendr\'e par les vecteurs $(1,0)$ et $(1,de)$, 
et $X_{d,e} := X[\mathfrak{D}_{d,e}]$. Alors $X_{d,e}$ n'est pas torique. 
En effet, $X_{d,e}$ n'est pas $\mathbb{T}$-isomorphe \`a $X[\mathbb{P}^{1},\mathfrak{D}]$ 
avec $\mathfrak{D}$ un diviseur poly\'edral propre 
sur $\mathbb{P}^{1}$ port\'e par au plus $2$ points, i.e. $\rm Card\,Supp\,\it\mathfrak{D}\rm\leq 2$ 
(voir [AL, Corollary 1.4] et le th\'eor\`eme $1.9$). 

Montrons que le groupe $\rm SAut\,\it X_{d,e}$ 
op\`ere infiniment transitivement dans le lieu lisse de $X_{d,e}$. Pour cela consid\'erons la
sous-alg\`ebre
\begin{eqnarray*}
A_{d,e} :=  \mathbf{k}\left[\chi^{(0,1)},\,\frac{(1-z)^{d}}{z^{de - 1}}\chi^{(de,-1)},\,\frac{1-z}{z^{e}}\chi^{(e,0)},\, 
\frac{(1-z)^{e}}{z^{d}}\chi^{(d,0)}\right]\subset \mathbf{k}(z)[\mathbb{Z}^{2}] 
\end{eqnarray*}
o\`u $\mathbb{T}$ est le tore $(\mathbf{k}^{\star})^{2}$ et $z$ est une variable sur $\mathbf{k}(\mathbb{T})$. 
Posons 
\begin{eqnarray*}
 u := \chi^{(0,1)},\,\,\, v := \frac{(1-z)^{d}}{z^{de - 1}}\chi^{(de,-1)},\,\,\, x := \frac{1-z}{z^{e}}\chi^{(e,0)},\,\,\, 
y := \frac{(1-z)^{e}}{z^{d}}\chi^{(d,0)}.
\end{eqnarray*}
Alors les fonctions $u,v,x,y$ v\'erifient la relation irr\'eductible $uv + x^{d} - y^{e} = 0 \,\,\, (E)$,
identifiant $\rm Spec\,\it A_{d,e}$ avec l'hypersurface $H_{d,e}$ d'\'equation $(E)$ de $\mathbb{A}^{4}$. 
L'origine $O$ de $\mathbb{A}^{4}$ est l'unique point singulier de $H_{d,e}$. 

D'apr\`es [Vie, Proposition 2], $H_{d,e}$ est normale en $O$ 
et par [KZ, \S 5], [AKZ, Theorem 3.2], le groupe $\rm SAut\,\it H_{d,e}$ op\`ere
infiniment transitivement dans $H_{d,e}-\{O\}$ comme suspension du plan affine. Par ailleurs, 
on a $(\rm Frac\,\it A_{d,e})^{\mathbb{T}} = \mathbf{k}(z)$ et 
$\rm Frac\,\it A_{d,e} = \rm Frac\it\, \mathbf{k}(z)[\mathbb{Z}^{\rm 2}]$.
Un calcul facile montre que $\mathfrak{D}[u,v,x,y] = \mathfrak{D}_{d,e}$. On conclut par le th\'eor\`eme $2.4$.

Notons que le calcul de $\mathfrak{D}_{d,e}$ peut \^etre obtenu \`a partir de $H_{d,e}$ en utilisant [AH, \S 11].
\end{exemple}
Le th\'eor\`eme $2.4$ appliqu\'e \`a $\mathbb{T} = \mathbf{k}^{\star}$ donne le corollaire suivant. 
Les parties concernant les cas paraboliques et hyperboliques
ont \'et\'e \'etablies dans [FZ, 3.9, 4.6]. Gr\^ace \`a ces r\'esultats, il est montr\'e dans [FZ]
que toute $\mathbb{C}^{\star}$-surface complexe normale affine parabolique ayant un bon quotient
isomorphe \`a $\mathbb{A}^{1}_{\mathbb{C}}$ est la normalisation d'une hypersurface de $\mathbb{A}^{3}_{\mathbb{C}}$
d'\'equation $x^{d} = yP(z)$, o\`u $P\in\mathbb{C}[z]$ est un polyn\^ome. Une description analogue est donn\'ee
dans le cas hyperbolique, c.f. [FZ, $4.8$]. 
\begin{corollaire}
Soit $C$ une courbe alg\'ebrique lisse, consid\'erons $\chi$ une variable sur $\mathbf{k}(C)$ et soit $A$ une sous-alg\`ebre
gradu\'ee de $\mathbf{k}(C)[\mathbb{Z}]$ ayant le 
m\^eme corps des fractions
que $\mathbf{k}(C)[\mathbb{Z}]$.
Alors les assertions suivantes sont vraies. 
\begin{enumerate}
\item[\rm (i)]\rm (cas elliptique et parabolique) \em
Si 
\begin{eqnarray*}
A = \mathbf{k}[C][f_{1}\chi^{m_{1}},\dots,f_{r}\chi^{m_{r}}]\subset \mathbf{k}(C)[\mathbb{Z}],
\end{eqnarray*} 
avec $f_{i}\in \mathbf{k}(C)^{\star}$ et $m_{1},\ldots,m_{r}\in\mathbb{Z}_{>0}$, alors la pr\'esentation de
Dolgachev-Pinkham-Demazure (D.P.D.) de $\bar{A} = A_{C,D}$ est donn\'ee par le $\mathbb{Q}$-diviseur sur $C$
\begin{eqnarray*}
D = - \min_{1\leq i\leq r}\frac{\rm div(\it f_{i}\rm)}{\it m_{i}}\,;
\end{eqnarray*}
\item[\rm (ii)]\rm (cas hyperbolique) \em 
Si $C=\rm Spec\it\, A_{\rm 0}$ est une courbe affine lisse, 
\begin{eqnarray*}
A = \mathbf{k}[C][f_{1}\chi^{-m_{1}},\ldots,f_{r}\chi^{-m_{r}},g_{1}\chi^{n_{1}},\ldots,g_{s}\chi^{n_{s}} ]\subset 
\mathbf{k}(C)[\mathbb{Z}],
\end{eqnarray*}
avec $n_{1},\ldots,n_{s}, m_{1},\ldots,m_{r}\in\mathbb{Z}_{>0}$, alors la pr\'esentation $D.P.D.$ 
de $\bar{A} = A_{0}[D_{-},D_{+}]$ est donn\'ee par les $\mathbb{Q}$-diviseurs 
\begin{eqnarray*}
D_{-} = - \min_{1\leq i\leq r}\frac{\rm div(\it f_{i}\rm)}{\it m_{i}}\,\,\,\rm et\,\,\,\it D_{+} = - \min_{\rm 1\it\leq i\rm\leq\it s}\frac{\rm div(\it g_{i}\rm)}{\it n_{i}}\,. 
\end{eqnarray*}
\end{enumerate}
\end{corollaire}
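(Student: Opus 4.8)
Le plan consiste \`a sp\'ecialiser le th\'eor\`eme $2.4$ au cas $\mathbb{T} = \mathbf{k}^{\star}$, o\`u $M = N = \mathbb{Z}$ et $N_{\mathbb{Q}} = M_{\mathbb{Q}} = \mathbb{Q}$, puis \`a traduire le diviseur $\sigma$-poly\'edral $\mathfrak{D}[f]$ produit par ce th\'eor\`eme dans le langage de la pr\'esentation D.P.D. En dimension $1$ la description est particuli\`erement simple : les $\sigma$-poly\`edres sont des demi-droites $[a,+\infty)$ lorsque $\sigma = \mathbb{Q}_{\geq 0}$ et des segments born\'es $[a^{-},a^{+}]$ lorsque $\sigma = \{0\}$, et leur fonction de support est lin\'eaire sur chacun des deux rayons de $\sigma^{\vee}$. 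Il suffit alors d'identifier les coefficients de l'\'evaluation $\mathfrak{D}[f](m)$ avec les $\mathbb{Q}$-diviseurs $D$, resp. $D_{\pm}$.

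Pour l'assertion $(i)$, comme $m_{1},\ldots,m_{r}\in\mathbb{Z}_{>0}$, le c\^one des poids est $\sigma^{\vee} = \mathbb{Q}_{\geq 0}$, d'o\`u $\sigma = \mathbb{Q}_{\geq 0}$. D'apr\`es la notation $2.3$, le poly\`edre $\Delta_{z}[f]$ est d\'ecrit par les in\'egalit\'es $m_{i}\,v\geq -\nu_{z}(f_{i})$, soit $v\geq -\nu_{z}(f_{i})/m_{i}$ pour tout $i$, de sorte que $\Delta_{z}[f] = [a_{z},+\infty)$ avec $a_{z} = -\min_{1\leq i\leq r}\nu_{z}(f_{i})/m_{i}$. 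Pour $m\geq 0$, le minimum d\'efinissant la fonction de support est atteint \`a l'extr\'emit\'e $a_{z}$ (proposition $1.10$), donc $h_{\Delta_{z}[f]}(m) = m\,a_{z}$ et $\mathfrak{D}[f](m) = m\,D$ avec $D = \sum_{z\in C}a_{z}\,.\,z = -\min_{1\leq i\leq r}\mathrm{div}(f_{i})/m_{i}$, le minimum de $\mathbb{Q}$-diviseurs \'etant pris coefficient par coefficient. Le th\'eor\`eme $2.4$ donne alors $\bar{A} = A[C,\mathfrak{D}[f]] = \bigoplus_{m\geq 0}H^{0}(C,\mathcal{O}_{C}(\lfloor mD\rfloor))\chi^{m} = A_{C,D}$, ce qui est exactement la pr\'esentation D.P.D. annonc\'ee, valable aussi bien dans le cas elliptique ($C$ projective) que parabolique ($C$ affine).

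Pour l'assertion $(ii)$, la pr\'esence simultan\'ee de poids $n_{j}>0$ et $-m_{i}<0$ force $\sigma^{\vee} = \mathbb{Q}$, donc $\sigma = \{0\}$ et les $\sigma$-poly\`edres sont born\'es. Les in\'egalit\'es $-m_{i}\,v\geq -\nu_{z}(f_{i})$ et $n_{j}\,v\geq -\nu_{z}(g_{j})$ donnent $\Delta_{z}[f] = [a_{z}^{-},a_{z}^{+}]$ avec $a_{z}^{-} = -\min_{j}\nu_{z}(g_{j})/n_{j}$ et $a_{z}^{+} = \min_{i}\nu_{z}(f_{i})/m_{i}$. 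La fonction de support atteint son minimum \`a l'extr\'emit\'e gauche pour $m>0$ et \`a l'extr\'emit\'e droite pour $m<0$ (proposition $1.10$), soit $h_{\Delta_{z}[f]}(m) = m\,a_{z}^{-}$ si $m>0$ et $h_{\Delta_{z}[f]}(m) = m\,a_{z}^{+}$ si $m<0$. On en d\'eduit $\mathfrak{D}[f](m) = m\,D_{+}$ pour $m>0$ avec $D_{+} = -\min_{1\leq i\leq s}\mathrm{div}(g_{i})/n_{i}$, et, en posant $m = -\vert m\vert$, $\mathfrak{D}[f](m) = \vert m\vert\,D_{-}$ pour $m<0$ avec $D_{-} = -\min_{1\leq i\leq r}\mathrm{div}(f_{i})/m_{i}$. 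En reportant ces identit\'es dans le th\'eor\`eme $2.4$, la d\'ecomposition de $\bar{A}$ selon les poids positifs et n\'egatifs s'identifie \`a la pr\'esentation D.P.D. hyperbolique $A_{0}[D_{-},D_{+}]$ de Flenner-Zaidenberg.

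L'obstacle principal est de nature purement comptable : il faut suivre avec soin la correspondance entre la fonction de support de l'intervalle, minimis\'ee \`a gauche ou \`a droite suivant le signe de $m$, et les conventions de signe de Flenner-Zaidenberg pour $D$ et $D_{\pm}$, puis v\'erifier que les op\'erations de minimum sur les $\mathbb{Q}$-diviseurs se lisent bien coefficient par coefficient, ce qui permet de faire commuter le $\min_{i}$ avec la sommation sur les points $z\in C$.
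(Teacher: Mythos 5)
Votre preuve est correcte et suit essentiellement la même démarche que celle du papier : on applique le théorème $2.4$ avec $M=\mathbb{Z}$, puis on identifie l'évaluation $\mathfrak{D}[f](m)$ aux $\mathbb{Q}$-diviseurs $D$, resp. $D_{\pm}$ (le papier se contente de noter $D=\mathfrak{D}(1)$ pour le cas (i) et déclare le cas (ii) analogue). Votre rédaction explicite simplement ce que le papier laisse au lecteur, à savoir le calcul des extrémités $a_{z}$, $a_{z}^{\pm}$ des coefficients polyédraux et le comportement de la fonction de support selon le signe de $m$ dans le cas hyperbolique.
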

\begin{proof}
Nous donnons la preuve pour le cas de $\rm (i)$. L'assertion $\rm (ii)$ se traite de la
m\^eme fa\c con. D'apr\`es le th\'eor\`eme $2.4$, on peut \'ecrire $\bar{A} = A[C,\mathfrak{D}]$ avec
$\mathfrak{D} = \sum_{z\in C}\Delta_{z}\cdot z$
o\`u pour $z\in C$,
\begin{eqnarray*}
\Delta_{z} = \left\{v\in\mathbb{Q},\,m_{i}\cdot v\geq -\nu(f_{i}),\,1\leq i\leq r\right\}. 
\end{eqnarray*}
Puisque $D = \mathfrak{D}(1)$, nous obtenons le r\'esultat.
\end{proof}

\section{Id\'eaux int\'egralement clos et poly\`edres entiers} 
Dans cette section, nous rappelons une description des id\'eaux monomiaux int\'egralement
clos en terme de poly\`edres entiers. Nous commen\c cons par quelques notions g\'en\'erales.
 \begin{rappel}
Puisque nous restons dans un contexte g\'eom\'etrique, la lettre $A$ d\'esigne une alg\`ebre int\`egre de 
type fini sur $\mathbf{k}$. Soit $I\subset A$ un id\'eal. Un \'el\'ement $a\in A$ est dit \em entier \rm (ou satisfaisant
une relation de d\'ependance int\'egrale) sur $I$ s'il existe $r\in\mathbb{Z}_{>0}$ et des \'el\'ements 
\begin{eqnarray*}
\lambda_{1}\in I,\,\lambda_{2}\in I^{2},\ldots,\,\lambda_{r}\in I^{r}\,\,\,\,\,\rm tels\,\,que\,\,\,\,\it a^{r}+\sum_{i \rm = 1}^{r}
\lambda_{i}a^{r\rm -\it i} \rm = 0\,.
\end{eqnarray*}
L'id\'eal $I$ est dit \em int\'egralement clos \rm (ou complet) si tout entier de $A$ sur $I$ appartient \`a $I$. 
On dit que $I$ est  \em normal \rm si pour tout
entier $i\geq 1$, l'id\'eal $I^{i}$ est int\'egralement clos. 

Le sous-ensemble $\bar{I}\subset A$, appel\'e \em cl\^oture int\'egrale \rm (ou fermeture int\'egrale)
 de $I$ dans $A$, est l'ensemble des \'el\'ements entiers sur $I$. 
C'est le plus petit id\'eal int\'egralement clos de $A$ contenant $I$ [HS, Corollary 1.3.1].
La d\'emonstration de ce fait utilise les r\'eductions d'id\'eaux (voir [NR]). 
Pour plus d'informations sur les 
id\'eaux int\'egralement clos,
voir par exemple [LeTe], [HS], [Va].
Consid\'erons l'alg\`ebre de Rees 
\begin{eqnarray*}
B = A[It] = A\oplus\bigoplus_{i\geq 1}I^{i}t^{i}
\end{eqnarray*}
correspondante \`a un id\'eal $I$ de $A$. Alors la normalisation de $B$ est  
\begin{eqnarray}
 \bar{B} = \bar{A}\oplus\bigoplus_{i\geq 1}\overline{\bar{A}\,\, I^{i}}t^{i}
\end{eqnarray}
o\`u $\bar{A}$ est la normalisation de $A$ (voir [Ri]). La normalisation de $B$ 
est \'egale \`a celle de $\bar{A}[\bar{A}It]$ [HS, Proposition 5.2.4]. 
Si $A$ est normale alors $A[It]$ est normale si et seulement si $I$ est normal. 

Supposons maintenant que $A$ est 
$M$-gradu\'ee normale. Un id\'eal $I$ de $A$ est dit \em homog\`ene \rm si $I$ est non nul et si $I$ est engendr\'e par des
\'el\'ements homog\`enes de $A$. Chaque id\'eal homog\`ene de $A$ est un sous-$\mathbb{T}$-module rationnel [Br, \S 1.1] et
admet donc une d\'ecomposition en somme directe de sous-espaces propres. D'apr\`es [HS, Corollary 5.2.3], 
si $I$ est homog\`ene alors $\bar{I}$ est homog\`ene.
\end{rappel}
En l'absence de r\'ef\'erence, nous donnons la preuve du lemme suivant.
\begin{lemme}
Soit $A$ une alg\`ebre normale de type fini sur $\mathbf{k}$ et soit $I$ un id\'eal de $A$. Alors 
la fermeture int\'egrale de $A[It]$ dans son corps des fractions est
\'egale \`a celle de $A[\bar{I}t]$. En particulier, pour tout $i\in\mathbb{Z}_{>0}$, 
$\overline{I^{i}} = \overline{\bar{I}^{i}}$.  
\end{lemme}
\begin{proof} 
Pour tout $i\in\mathbb{Z}_{>0}$, on a  $I^{i}\subset \bar{I}^{i}$, soit $A[It]\subset A[\bar{I}t]$.
D'o\`u $\overline{A[It]}\subset \overline{A[\bar{I}t]}$. Par $(6)$ et puisque $A = \bar{A}$ est normale, 
$\bar{I}t$ est inclus dans $\overline{A[It]}$, donc 
$A[\bar{I}t]\subset\overline{A[It]}$ et finalement  $\overline{A[\bar{I}t]}\subset\overline{A[It]}$. La deuxi\`eme affirmation
est une cons\'equence de $(6)$ et de l'\'egalit\'e $\overline{A[It]} = \overline{A[\bar{I}t]}$.
\end{proof}  
\begin{rappel} 
Fixons un c\^one poly\'edral saillant $\sigma\subset N_{\mathbb{Q}}$. 
Consid\'erons une partie $F\subset M_{\mathbb{Q}}$ telle que $F\cap\sigma^{\vee}\neq \emptyset$.
On note $\rm Pol_{\sigma^{\vee}}(\it F)$ l'ensemble
des poly\`edres de la forme $P = Q + \sigma^{\vee}$ avec $Q$ un polytope dont
les sommets appartiennent \`a $F$. Un \em $\sigma^{\vee}$-poly\'edre entier \rm
est un \'el\'ement de $\rm Pol_{\sigma^{\vee}}(\it M)$.

Si $P\in\rm Pol_{\sigma^{\vee}}(\it M)$ alors 
pour tout entier $e\geq 1$, on pose $eP := P+\ldots +P$ la somme de Minkowski de $e$ exemplaires de $P$. 
Le poly\`edre $eP$ est l'image de $P$ par l'homoth\'etie de centre $0$ et de rapport $e$. Si $e = 0$ 
alors on pose $eP = \sigma^{\vee}$. On dit que $P$ est \em normal \rm si pour tout entier $e\geq 1$,
\begin{eqnarray*}
(eP)\cap M = \{m_{1}+\ldots+m_{e}\,|\,m_{1},\ldots,m_{e}\in P\cap M\}\,.        
\end{eqnarray*}
Un sous-semi-groupe $E\subset M$ est dit \em satur\'e \rm si pour
$d\in\mathbb{Z}_{>0}$ et pour $m\in M$ tels que $dm\in E$, on a $m\in E$.
Cela est \'equivalent \`a ce que $E$ soit l'intersection de $\rm Cone(\it E)$ et du r\'eseau $M$.
\end{rappel}
L'assertion suivante est ais\'ee et nous laissons la d\'emonstration aux lecteurs.
\begin{lemme}
Pour un c\^one poly\'edral saillant $\sigma\subset N_{\mathbb{Q}}$ et un poly\`edre entier $P\in\rm Pol_{\sigma^{\vee}}(\it M)$, on note 
\begin{eqnarray*}
S = S_{P} := \{(m,e)\in M\times\mathbb{N}\,|\,m\in (eP)\cap M\}\,.
\end{eqnarray*}
Alors $S$ est un sous-semi-groupe satur\'e de $M\times\mathbb{Z}$. De plus, pour tout $e\geq 1$, l'enveloppe convexe de  
\begin{eqnarray*} 
E_{[e,P]} := \{m_{1}+\ldots +m_{e}\,|\,m_{1},\ldots,m_{e}\in P\cap M\}
\end{eqnarray*}
dans $M_{\mathbb{Q}}$ est \'egale \`a $eP$.
\end{lemme}  

Rappelons que tout id\'eal homog\`ene int\'egralement clos d'une vari\'et\'e torique affine est caract\'eris\'e 
par l'enveloppe convexe de l'ensemble de ses poids. Voir [Vit, $3.1$] pour le cas de l'alg\`ebre 
des polyn\^omes \`a plusieurs variables. Le r\'esultat suivant est connu (voir 
[CLS, Proposition $11.3.4$] pour le cas o\`u $\sigma^{\vee}$ est saillant, ainsi\footnote{
Nous remercions un des rappoteurs de nous avoir communiqu\'e cette r\'ef\'erence.} que [KKMS, Chapter I, \S$2$] pour
le cas g\'en\'eral). Par commodit\'e,
nous donnons une courte preuve. Cela nous sera utile pour la suite. 
Pour tout entier $e$, nous noterons $M_{e} = M\times\{e\}$.

\begin{theorem}
Soit $\sigma\subset N_{\mathbb{Q}}$ un c\^one poly\'edral saillant. Alors l'application 
\begin{eqnarray*}
P\mapsto I[P] = \bigoplus_{m\in P\cap M}\mathbf{k}\,\chi^{m}
\end{eqnarray*}
est une bijection entre $\rm Pol_{\sigma^{\vee}}(\it \sigma^{\vee}_{M})$ 
et l'ensemble des id\'eaux homog\`enes int\'egralement clos de $A:= \mathbf{k}[\sigma^{\vee}_{M}]$. 
Plus pr\'ecis\'ement, 
si $I = (\chi^{m_{1}},\ldots,\chi^{m_{r}})$ est un id\'eal homog\`ene de $A$ 
alors sa fermeture $\bar{I}$ est $I[P]$ o\`u 
\begin{eqnarray}
P = \rm Conv(\it m_{\rm 1},\ldots,m_{r}\rm)+\sigma^{\vee}.
\end{eqnarray}
Le c\^one des poids $\widetilde{\omega}$ de $A[It]$ est
l'unique c\^one v\'erifiant $\widetilde{\omega}\cap M_{e}
 = (eP)\cap M$, pour tout $e\in\mathbb{N}$.
De plus, si $P\in\rm Pol_{\sigma^{\vee}}(\it \sigma^{\vee}_{M})$
alors $P$ est normal si et seulement si $I[P]$ est normal.
\end{theorem}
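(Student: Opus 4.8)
The plan is to prove the four assertions of Theorem 3.5 more or less in the order stated, exploiting the saturated semigroup $S_P$ from Lemma~3.4 as the central bookkeeping device. The first task is to show that each $I[P]$ is an integrally closed homogeneous ideal. Homogeneity is immediate, and $I[P]$ is an ideal because for $m\in P\cap M$ and $m'\in\sigma^\vee_M$ we have $m+m'\in P\cap M$ (since $P=P+\sigma^\vee$), so multiplication by $\chi^{m'}$ preserves $I[P]$. Integral closedness follows from the monomial-ideal reduction: an element of $A$ is integral over $I[P]$ if and only if some homogeneous piece $\chi^m$ is, and $\chi^m$ is integral over $I[P]$ exactly when $m$ lies in the convex hull of the exponents of $I[P]$ translated by $\sigma^\vee$, i.e. in $P=\mathrm{Conv}(P\cap M)+\sigma^\vee$ by Lemma~3.4 (case $e=1$). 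Thus integrality adds no new monomials and $I[P]=\overline{I[P]}$.

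Next I would establish the formula~(7) and surjectivity of the map together. Given a homogeneous ideal $I=(\chi^{m_1},\dots,\chi^{m_r})$, the same monomial-integrality criterion says $\chi^m$ is integral over $I$ precisely when $m\in\mathrm{Conv}(m_1,\dots,m_r)+\sigma^\vee=:P$; hence $\bar I=I[P]$, giving~(7) and showing every integrally closed homogeneous ideal arises as some $I[P]$. Injectivity of $P\mapsto I[P]$ is then clean: from $I[P]$ one recovers $P\cap M$ as the exponent set, and $P=\mathrm{Conv}(P\cap M)+\sigma^\vee$ by the $e=1$ case of Lemma~3.4, so $P$ is determined by $I[P]$. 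This yields the claimed bijection onto the integrally closed homogeneous ideals.

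For the weight cone $\widetilde\omega$ of the Rees algebra $A[It]$ with $I=I[P]$, I would identify the $M\times\mathbb Z$-graded structure of $\overline{A[It]}$. By the monomial analogue of~(6), the degree-$e$ part is $\overline{I^e}=I[eP]$, whose exponents are $(eP)\cap M$; by Lemma~3.4 these are exactly the lattice points of $S_P$ in $M_e$. Since $S_P$ is a saturated subsemigroup of $M\times\mathbb Z$ (Lemma~3.4), it equals $\widetilde\omega\cap(M\times\mathbb Z)$ for a unique polyhedral cone $\widetilde\omega=\mathrm{Cone}(S_P)$, and this is the weight cone of the normalized Rees algebra; the slice condition $\widetilde\omega\cap M_e=(eP)\cap M$ is then immediate and pins $\widetilde\omega$ down uniquely.

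Finally, for the normality equivalence I would unwind the definitions via $S_P$. The ideal $I[P]$ is normal iff every $I[eP]$ is integrally closed, i.e. iff for every $e\ge1$ the lattice points $(eP)\cap M$ coincide with the semigroup $E_{[e,P]}$ of $e$-fold sums of points of $P\cap M$; by~(3.3) this is precisely the definition of $P$ being a normal polyhedron. Equivalently, $A[I[P]t]$ is normal iff its weight semigroup $S_P\cap M_e=E_{[e,P]}$ for all $e$, which again is normality of $P$. The one point requiring care — and the step I expect to be the main obstacle — is verifying that the homogeneous piece in degree $e$ of the integral closure of the Rees algebra really is $I[eP]$ (rather than something larger coming from cross terms in the Rees grading); this is where I would lean on Lemma~3.1, the monomial description~(6), and the $A_0=\mathbf k$ structure of the toric case to rule out extra integral elements, reducing everything to the convex-geometric content already packaged in Lemma~3.4.
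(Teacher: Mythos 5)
Your proposal has the same overall architecture as the paper's proof: Lemma 3.4 is the central device for injectivity and for the identification $\mathrm{Conv}(E_{[e,P]})=eP$, equation (6) of 3.1 handles the graded pieces of the normalized Rees algebra, and normality of $I[P]$ is reduced to the condition $(eP)\cap M=E_{[e,P]}$ for all $e$. However, you outsource the decisive step to an unproved ``monomial-integrality criterion'', namely that $\chi^{m}$ is integral over a monomial ideal exactly when $m$ lies in the convex hull of its exponents plus $\sigma^{\vee}$. That criterion \emph{is} the first two assertions of the theorem (it is formula (7) together with the homogeneity of integral closures), so invoking it in a blind proof of this theorem is circular; moreover the standard citable versions cover the polynomial ring or the case where $\sigma^{\vee}$ is pointed, whereas here $\sigma^{\vee}$ is only the dual of a pointed cone, hence full dimensional but possibly containing lines (e.g. $A=\mathbf{k}[x^{\pm 1},y]$). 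This generality issue is precisely why the paper proves the criterion from scratch.

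What is missing are the two short arguments that constitute the paper's actual proof. For $\overline{I[P]}\subset I[P]$: take $\alpha\in\overline{I[P]}$ homogeneous of weight $m$; since the integral closure of a homogeneous ideal is homogeneous, one may project the integral-dependence equation onto the graded piece of $\alpha^{d}$ and assume $\alpha^{d}=c\,\chi^{m_{d}}$ with $m_{d}\in (dP)\cap M$; then $dm=m_{d}$, and the saturation of $S_{P}$ --- the other half of Lemma 3.4, which your write-up uses only for the weight cone --- gives $(m,1)\in S_{P}$, i.e. $m\in P\cap M$. For $I[P]\subset\overline{I}$ when $I=(\chi^{m_{1}},\ldots,\chi^{m_{r}})$ and $P=\mathrm{Conv}(m_{1},\ldots,m_{r})+\sigma^{\vee}$: write $m=\sum_{i}c_{i}m_{i}+c$ with $c_{i}\in\mathbb{Q}_{\geq 0}$, $\sum_{i}c_{i}=1$, $c\in\sigma^{\vee}$, clear denominators to get $\chi^{d'm}\in I^{d'}$, so that $\chi^{m}$ satisfies $x^{d'}-\chi^{d'm}=0$. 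With these two arguments inserted, your proof closes up and coincides with the paper's; the remaining parts (the weight cone of $A[It]$ via $\mathrm{Cone}(S_{P})$ and the normality equivalence) are correct as you wrote them, up to two slips of wording: an element is integral over a homogeneous ideal iff \emph{all} (not ``some'') of its homogeneous components are, and normality of $I[P]$ means that each power $I[P]^{e}$ is integrally closed, i.e. $I[P]^{e}=I[eP]$ --- the ideals $I[eP]$ themselves are always integrally closed.
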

\begin{proof}
La justification de la bijectivit\'e de $I\mapsto I[P]$ provient de
[KKMS, Chapter I, \S$2$] et nous l'omettons. 

Soit $I\subset A$
un id\'eal engendr\'e par $\chi^{m_{1}},\ldots,\chi^{m_{r}}$. 
Consid\'erons $P$ comme dans $(7)$. Montrons
l'\'egalit\'e $\bar{I} = I[P]$. 
Puisque $I\subset I[P]$ et comme $I[P]$ est int\'egralement clos, on
a $\bar{I}\subset I[P]$. Soit $\chi^{m}\in I[P]$. Alors $m\in P\cap M$.
Donc il existe $c_{1},\ldots, c_{r}\in \mathbb{Q}_{\geq 0}$ de somme
\'egale \`a $1$ et $c\in\sigma^{\vee}$ tels que 
\begin{eqnarray*}
 m = \sum_{i = 1}^{r}c_{i}m_{i} + c. 
\end{eqnarray*}
Soit $d'\in\mathbb{Z}_{>0}$ tel que $d'c_{i}\in \mathbb{N}$ pour chaque $i$ et
tel que $dc\in\sigma^{\vee}_{M}$. Alors $\chi^{d'm}\in I^{d'}$ et donc
$\chi^{m}\in \bar{I}$. D'o\`u $\bar{I} = I[P]$ et la surjectivit\'e de $\varphi$.

D\'eterminons le c\^one des poids $\widetilde{\omega}$ de $A[It]$. D'apr\`es le lemme $3.2$,
on peut supposer que $I = I[P]$.
Soit $e\in\mathbb{Z}_{>0}$. Alors on a 
\begin{eqnarray*}
I[P]^{e} = \bigoplus_{m\in E[e,P]}\mathbf{k}\chi^{m}. 
\end{eqnarray*}
Notons $\overline{I[P]^{e}} = I[P_{e}]$, pour un poly\`edre 
$P_{e}\in\rm Pol_{\sigma^{\vee}}(\it \sigma^{\vee}_{M})$. Par le lemme $3.4$, 
on a d'une part,
\begin{eqnarray*}
eP = \rm Conv(\it E_{[e,P]}) \subset P_{e}. 
\end{eqnarray*}
D'autre part, $I[P]^{e}\subset I[eP]$ et donc $I[P_{e}]\subset I[eP]$.
D'o\`u $eP = P_{e}$. D'apr\`es l'\'egalit\'e $(6)$ de $3.1$, $\widetilde{\omega}$ est le c\^one des
poids de
\begin{eqnarray*}
\overline{A[It]} = A\oplus\bigoplus_{e\in\mathbb{Z}_{>0}}I[eP]t^{e}. 
\end{eqnarray*} 
D'o\`u $\widetilde{\omega}\cap M_{e} = (eP)\cap M$, pour tout entier $e\geq 0$.
Les autres assertions suivent ais\'ement.
\end{proof}

\paragraph{}
Le th\'eor\`eme suivant est une cons\'equence d'un r\'esultat
sur la normalit\'e des polytopes entiers [BGN, Theorem $1.1.3$]. 
\begin{theorem}
Soit $\sigma\subset N_{\mathbb{Q}}$ un c\^one poly\'edral saillant, notons 
$n = \rm dim\,\it N_{\mathbb{Q}}\rm $ et consid\'erons $P\in\rm Pol_{\sigma^{\vee}}(\it M)$. 
Alors pour tout entier $e\geq n-1$, le poly\`edre $eP$ est normal. 
En particulier, tout id\'eal stable int\'egralement clos d'une surface torique 
est normal.    
\end{theorem}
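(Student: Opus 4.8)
The plan is to reduce the statement to the known normality result for bounded lattice polytopes [BGN, Theorem $1.1.3$], the essential difficulty being the treatment of the unbounded recession cone $\sigma^\vee$.

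First I would unwind the definition from $3.3$. Writing $P = Q + \sigma^\vee$ with $Q$ a lattice polytope, and using that $\sigma^\vee$ is a cone so that $eP = eQ + \sigma^\vee$ and $fe\,P = fe\,Q + \sigma^\vee$, the assertion that $eP$ is normal becomes the equality, for every integer $f\geq 1$,
\[
(fe\,P)\cap M = \{\, m_1 + \cdots + m_f \mid m_i \in (eP)\cap M \,\}.
\]
Equivalently, in the language of Lemma $3.4$, this says that the $e$-th Veronese subsemigroup $\bigoplus_{f\geq 0}(S_P)_{ef}$ of the saturated graded semigroup $S_P$ is generated by its elements $(S_P)_e$ of lowest positive degree; it is this reformulation that will let me feed the problem to the polytopal theorem. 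I would then make two normalizing reductions. Passing to the affine hull of $P$ and to the induced sublattice (legitimate since $Q$ has lattice vertices), I may assume $P$ is full-dimensional, so its dimension $d$ satisfies $d\leq n$; it then suffices to produce the decompositions for $e\geq d-1$, which is implied by $e\geq n-1$. Second, I would record that $\sigma^\vee_M = \sigma^\vee\cap M$ is a finitely generated saturated semigroup (Gordan, as in $3.3$) with a Hilbert basis $g_1,\dots,g_s$, which I will use to compactify the conical directions.

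The heart of the proof, and the main obstacle, is the passage from the unbounded polyhedron $Q+\sigma^\vee$ to a genuine bounded lattice polytope to which [BGN, Theorem $1.1.3$] applies. The naive attempt fails: a lattice point of $Q+\sigma^\vee$ need not be the sum of a lattice point of $Q$ and a lattice point of $\sigma^\vee_M$, so the compact and conical directions cannot be separated term by term. Instead I would work inside the homogenization cone
\[
C_P = \mathrm{Cone}\big(\{(v,1) : v\in V(Q)\}\cup\{(u,0) : u \text{ a ray generator of } \sigma^\vee\}\big)\subset M_{\mathbb{Q}}\times\mathbb{Q},
\]
whose lattice points are exactly $S_P$ (Lemma $3.4$). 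Fixing $w\in (fe\,P)\cap M$, the lattice points $m_i\in(eP)\cap M$ that could occur in a decomposition of $w$ are confined to the bounded set $(eP)\cap\big(w-(f-1)eP\big)$; intersecting $C_P$ with the corresponding bounded slab produces a bounded lattice polytope of dimension $\leq d$ on which the sumset equality is precisely the integral decomposition property granted by BGN as soon as $e\geq d-1$. The saturation of $\sigma^\vee_M$ (and the presence of the rays $(u,0)$ in $C_P$) is what lets me absorb the conical excess into the finitely many bounded pieces. Carrying out this confinement argument carefully, and then inducting on $f$ by peeling off one lattice point of $(eP)\cap M$ at a time, yields the displayed equality and hence the normality of $eP$ for all $e\geq n-1$.

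Finally, the last assertion follows by specialization. For a toric surface one has $n = \dim N_{\mathbb{Q}} = 2$, so $n-1 = 1$, and the theorem with $e=1$ gives that $P$ itself is normal for every $P\in\mathrm{Pol}_{\sigma^\vee}(\sigma^\vee_M)$. By Theorem $3.5$, every integrally closed $\mathbb{T}$-stable ideal of the affine toric surface $\mathbf{k}[\sigma^\vee_M]$ is of the form $I[P]$, and normality of $P$ is equivalent to normality of $I[P]$; hence every such ideal is normal.
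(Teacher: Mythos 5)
Your overall framework is sound --- the reduction of normality of $eP$ to the equalities $(fe\,P)\cap M = E_{[f,eP]}$, and the specialization to surfaces via Theorem $3.5$, both match the paper --- but the heart of your argument, the passage from the unbounded polyhedron to a bounded polytope, has a genuine gap. First, the confinement set $(eP)\cap\bigl(w-(f-1)eP\bigr)$ is bounded only when $\sigma^{\vee}$ is pointed, i.e.\ when $\sigma$ is full-dimensional; in general its recession cone is the lineality space $\sigma^{\vee}\cap(-\sigma^{\vee})\neq\{0\}$, and your reduction to the affine hull of $P$ does not remove it (indeed $P$ is automatically full-dimensional: $\sigma$ strongly convex forces $\dim\sigma^{\vee}=n$, so that reduction is vacuous). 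Second, and more seriously, this confinement set --- or the intersection of $C_{P}$ with a rational slab --- is almost never a \emph{lattice} polytope: its vertices arise from intersecting facets of $eP$ with facets of $w-(f-1)eP$ and are in general only rational (two lines through lattice points with integral directions meet at a rational, typically non-integral, point). Since [BGN, Theorem $1.1.3$] is a statement about lattice polytopes, it cannot be invoked on this piece. Third, even granting a bounded lattice polytope, BGN concerns dilates $cR$ of a fixed lattice polytope $R$; the confinement set is not presented as such a dilate, and knowing where the summands $m_{i}$ of $w$ \emph{must} lie (which is all the confinement gives) is not the same as producing them --- producing them is exactly the inductive step you defer to ``peeling off one lattice point at a time,'' so the argument is circular at its key point.

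The paper sidesteps all of this differently: it subdivides $\sigma^{\vee}$ into full-dimensional strongly convex cones $\tau$, observes that any $m\in(eP)\cap M$ lies in $(eQ+\tau)\cap M$ for some $\tau$, and applies [CLS, Proposition $7.19$] to the lattice polyhedron $Q+\tau$, whose recession cone is now pointed; the inclusion $E_{[e,Q+\tau]}\subset E_{[e,P]}$ then concludes. If you insist on reducing directly to BGN, the correct truncation goes the other way, by an inscribed rather than a cut-off polytope: write $\sigma^{\vee}=\mathrm{Cone}(g_{1},\ldots,g_{s})$ with $g_{j}\in M$, and given $w\in(fe\,P)\cap M$ choose $N$ so large that $w\in fe\,(Q+D_{N})$, where $D_{N}=\mathrm{Conv}(0,Ng_{1},\ldots,Ng_{s})\subset\sigma^{\vee}$. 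Then $R:=Q+D_{N}$ is a lattice polytope contained in $P$ of dimension at most $n$, BGN gives normality of $eR$ for $e\geq n-1$, and $w\in f(eR)\cap M$ decomposes into $f$ points of $(eR)\cap M\subset(eP)\cap M$, as required.
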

\begin{proof}
Soit $\mathscr{E}$ une subdivision de c\^ones poly\'edraux saillants de $\sigma^{\vee}$
de dimension $n$. Ecrivons $P = Q + \sigma^{\vee}$ avec $Q$ un polytope entier. 
Soit $m\in (eP)\cap M$. Alors il existe $\tau\in\mathscr{E}$ tel que $m\in (eQ + \tau)\cap M$.
D'apr\`es [CLS, Proposition $7.1.9$], on a $m\in E_{[e,Q+\tau]}$ (voir $3.4$). Donc $m\in E_{[e,P]}$ et
$eP$ est normal. Le reste provient du th\'eor\`eme $3.5$.  
\end{proof}
 
\begin{remarque}
Notons que dans le cas o\`u $\mathbf{k}[\sigma^{\vee}_{M}] = 
\mathbf{k}[x_{1},\dots,x_{n}]$ est l'alg\`ebre des polyn\^omes, un id\'eal monomial 
$I\subset\mathbf{k}[x_{1},\ldots,x_{n}]$ est normal si et seulement si pour tout $i= 1,\ldots,n-1$, 
l'id\'eal $I^{i}$ est int\'egralement clos [RRV, $3.1$]. Nous verrons une g\'en\'eralisation de
ce r\'esultat dans la section $5$.

D'apr\`es [ZS, Appendix $5$], [HS, $\S 1.1.4$, $\S 14.4.4$], 
tout id\'eal int\'egralement clos d'une surface affine lisse est normal. Cependant cela ne s'applique 
pas aux vari\'et\'es toriques affines de dimension $3$. 
En effet, d'apr\`es [HS, Exercice 1.14], 
si $\sigma\subset\mathbb{Q}^{3}$ est l'octant positif et si
\begin{eqnarray*}
P := \rm Conv((2,0,0),\, (0,3,0),\,(0,0,7)) + \it\sigma^{\vee}  
\end{eqnarray*} 
alors $I[P]$ n'est pas normal.
\end{remarque}

\section{Id\'eaux int\'egralement clos et $\mathbb{T}$-vari\'et\'es affines de complexit\'e un} 
Dans cette section, $C$ d\'esigne une courbe alg\'ebrique lisse, le sous-ensemble 
$\sigma\subset N_{\mathbb{Q}}$ est un c\^one poly\'edral saillant et $\mathfrak{D} = \sum_{z\in C}\Delta_{z}\cdot z$
 est un diviseur $\sigma$-poly\'edral propre. Posons $A := A[C, \mathfrak{D}]$. 
Nous allons \'etudier
les id\'eaux homog\`enes int\'egralement clos
de $A$. Nous commen\c cons par d\'ecrire le c\^one des poids de l'alg\`ebre de 
Rees de $A$ selon un id\'eal homog\`ene.
Rappelons que pour un entier $e$, $M_{e}$ d\'esigne $M\times\{e\}$.
\begin{lemme}
Soit $I\subset A$ un id\'eal engendr\'e 
par des \'el\'ements homog\`enes $f_{1}\chi^{m_{1}},\ldots, f_{r}\chi^{m_{r}}$. Notons
\begin{eqnarray*}
P = \rm Conv(\it m_{\rm 1\it},\ldots m_{r})+\sigma^{\vee}.
\end{eqnarray*}
Alors le c\^one des poids $\widetilde{\omega}$ 
de l'alg\`ebre $(M\times\mathbb{Z})$-gradu\'ee $A[It]$ v\'erifie $\widetilde{\omega}\cap M_{e} = 
(eP)\cap M$, pour tout $e\in\mathbb{N}$.
\end{lemme}
\begin{proof}
Soit $E$ l'ensemble des
poids de $I$. La partie
$J:=\bigoplus_{m\in E}\mathbf{k}\chi^{m}$ 
est un id\'eal de $A':=\mathbf{k}[S]$.
Par le lemme $3.2$, on a
\begin{eqnarray*}
 \overline{A'[Jt]} = \overline{B[\overline{JB}t]}\rm\,\,\, avec\,\,\, \it B:=\mathbf{k}[\sigma^{\vee}_{M}].
\end{eqnarray*}
Les \'el\'ements 
$\chi^{m_{1}},\ldots, \chi^{m_{r}}$ engendrent l'id\'eal $JB$. 
Puisque $\widetilde{\omega}$ est le c\^one des poids de $\overline{A'[Jt]}$,
le reste provient du th\'eor\`eme $3.5$. 
\end{proof}
L'assertion suivante permet de calculer explicitement la fermeture int\'egrale des
id\'eaux homog\`enes de l'alg\`ebre $A$.
\begin{theorem}
Soit $I$ un id\'eal de $A = A[C,\mathfrak{D}]$ engendr\'e par des \'el\'ements homog\`enes
$f_{1}\chi^{m_{1}},\ldots,f_{r}\chi^{m_{r}}$. 
Alors on a 
\begin{eqnarray*}
 A = \bigoplus_{m\in\sigma^{\vee}_{M}}
H^{\rm 0\it }(C,\mathcal{O}_{C}(\lfloor\widetilde{\mathfrak{D}}(m,\rm 0\it)\rfloor ))\chi^{m}
\end{eqnarray*}
et pour tout entier $e\geq 1$,
\begin{eqnarray*}
\overline{I^{e}} =  \bigoplus_{m\in (eP)\cap M}
H^{0}(C,\mathcal{O}_{C}(\lfloor\widetilde{\mathfrak{D}}(m,e)\rfloor ))\chi^{m}
\end{eqnarray*} 
o\`u $(P,\widetilde{\mathfrak{D}})$ v\'erifie les conditions suivantes.
\begin{enumerate}
 \item[\rm (i)] $P$ est le poly\`edre $\rm Conv(\it m_{\rm 1},\ldots,m_{r}\rm)+\it\sigma^{\vee}$;
\item[\rm (ii)] $\widetilde{\mathfrak{D}}$ est le diviseur poly\'edral sur $C$
dont ses coefficients sont d\'efinis par
\begin{eqnarray*}
\widetilde{\Delta}_{z} = \bigcap_{i = 1}^{r}\left\{\,(v,p)\in N_{\mathbb{Q}}\times\mathbb{Q},\,
m_{i}(v) + p \geq -\nu_{z}(f_{i})\,\right\}\cap\left(\Delta_{z}\times\mathbb{Q}\right), 
\end{eqnarray*}
pour tout $z\in C$.
\end{enumerate}
\end{theorem}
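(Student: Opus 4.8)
The plan is to deduce the whole statement from Theorem $2.4$, applied to the Rees algebra $A[It]$ viewed as a multigraded algebra of complexity one for the enlarged torus $\mathbb{T}\times\mathbf{k}^{\star}$. The point is that, by formula $(6)$ of $3.1$ and the normality of $A$, the graded pieces of $\overline{A[It]}$ in $t$-degree $e\geq 1$ are exactly the $\overline{I^{e}}$, so it suffices to compute this normalization as a proper polyhedral divisor on $C$.

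First I would fix homogeneous generators of $A$ over $\mathbf{k}[C]$, say $A = \mathbf{k}[C][g_{1}\chi^{n_{1}},\ldots,g_{s}\chi^{n_{s}}]$, and record the presentation
\begin{eqnarray*}
A[It] = \mathbf{k}[C]\bigl[g_{1}\chi^{(n_{1},0)},\ldots,g_{s}\chi^{(n_{s},0)},\,f_{1}\chi^{(m_{1},1)},\ldots,f_{r}\chi^{(m_{r},1)}\bigr]\subset\mathbf{k}(C)[M\times\mathbb{Z}],
\end{eqnarray*}
where the last coordinate records the power of $t$ and $t$ is identified with $\chi^{(0,1)}$. Since $A$ already has fraction field $\mathrm{Frac}\,\mathbf{k}(C)[M]$ and $I\neq\{0\}$, one has $t = (f_{i}\chi^{(m_{i},1)})/(f_{i}\chi^{m_{i}})\in\mathrm{Frac}\,A[It]$, so $A[It]$ and $\mathbf{k}(C)[M\times\mathbb{Z}]$ share their fraction field; the invariant field is still $\mathbf{k}(C)$, hence the complexity is one and the weights $(n_{j},0),(m_{i},1)$ span $(M\times\mathbb{Z})_{\mathbb{Q}}$. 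Thus the hypotheses of Theorem $2.4$ are met, and it gives $\overline{A[It]} = A[C,\widetilde{\mathfrak{D}}]$ with the coefficient at $z$ cut out by $n_{j}(v)\geq -\nu_{z}(g_{j})$ and $m_{i}(v)+p\geq -\nu_{z}(f_{i})$ in the variables $(v,p)\in N_{\mathbb{Q}}\times\mathbb{Q}$.

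Next I would invoke the normality of $A$: Theorem $2.4$ applied to the generators $g_{j}\chi^{n_{j}}$ alone recovers $\mathfrak{D}$, so the inequalities coming from the $g_{j}$ carve out precisely $\Delta_{z}\times\mathbb{Q}$. Intersecting with the remaining inequalities yields
\begin{eqnarray*}
\widetilde{\Delta}_{z} = \bigcap_{i=1}^{r}\bigl\{(v,p)\in N_{\mathbb{Q}}\times\mathbb{Q}:\,m_{i}(v)+p\geq -\nu_{z}(f_{i})\bigr\}\cap(\Delta_{z}\times\mathbb{Q}),
\end{eqnarray*}
which is the formula of assertion (ii). Part (i) is the combinatorial statement of Lemma $4.1$, which identifies $\widetilde{\omega}\cap M_{e} = (eP)\cap M$ with $P = \mathrm{Conv}(m_{1},\ldots,m_{r})+\sigma^{\vee}$ and thereby pins down the index sets of the two displayed decompositions.

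Finally I would read off the two formulas by slicing $\overline{A[It]}$ in $t$-degree: the degree-$(m,e)$ piece for $e\geq 1$ is the degree-$m$ part of $\overline{I^{e}}$, giving the second formula, while for the algebra $A$ itself one checks $\widetilde{\mathfrak{D}}(m,0) = \mathfrak{D}(m)$ because the projection of $\widetilde{\Delta}_{z}$ to $N_{\mathbb{Q}}$ is all of $\Delta_{z}$ (one may take $p$ arbitrarily large), so the support functions agree. I expect the only delicate point to be the verification that the $g_{j}$-inequalities collapse to $\Delta_{z}\times\mathbb{Q}$; this rests entirely on the normality of $A$ through Theorem $2.4$, and is where a careless choice of generators could spoil the argument.
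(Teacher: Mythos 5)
Your proposal is correct and takes essentially the same route as the paper: the paper's (very terse) proof likewise applies Theorem $2.4$ to the Rees algebra $A[It]$ using the elements $f_{i}\chi^{(m_{i},1)}$ together with homogeneous generators of $A$ in $t$-degree $0$, and then concludes via Lemma $4.1$ and formula $(6)$ of $3.1$. The only difference is that you make explicit two points the paper leaves implicit — the verification of the fraction-field hypothesis of Theorem $2.4$ for $A[It]$, and the fact that normality of $A$ (via the uniqueness in Theorem $1.7$) forces the inequalities coming from the generators $g_{j}\chi^{n_{j}}$ to cut out exactly $\Delta_{z}\times\mathbb{Q}$, which is what makes assertion (ii) hold.
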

\begin{proof}
On applique le th\'eor\`eme  $2.4$ \`a l'alg\`ebre $A[It]$
en utilisant les \'el\'ements $f_{i}\chi^{m_{i}}$ et 
des g\'en\'erateurs homog\`enes de $A$. Le diviseur
poly\'edral correspondant $\widetilde{\mathfrak{D}}$ v\'erifie
$\rm (ii)$. On conclut par le lemme $4.1$. 
\end{proof}

\begin{exemple}
Reprenons l'exemple $2.5$. On consid\`ere l'id\'eal homog\`ene
\begin{eqnarray*}
I = (t_{2},t_{3},t_{4})\subset A = k[t_{1},t_{2},t_{3},t_{4}]\cong 
\frac{k[x_{1},x_{2},x_{3},x_{4}]}{(x_{4}^{2}-x_{1}^{2}x_{2}^{2}x_{3} - x_{1}x_{3}^{2})}\,. 
\end{eqnarray*}
Soit $\widetilde{\omega}\subset \mathbb{Q}^{3}$ le c\^one v\'erifiant 
$\widetilde{\omega}\cap \mathbb{Z}^{2}_{e} = (0,e) + \mathbb{Q}_{\geq 0}^{2}$,
pour tout entier $e\geq 0$. Soit $\widetilde{\mathfrak{D}}$ le diviseur $\widetilde{\omega}^{\vee}$-poly\'edral sur
$\mathbb{P}^{1}$ construit par les g\'en\'erateurs $t_{2},t_{3},t_{4}$. 
Les coefficients non triviaux de $\widetilde{\mathfrak{D}}$ sont
\begin{eqnarray*}
 \widetilde{\Delta}_{0} = -\left(\frac{1}{2},0,0\right) + \widetilde{\omega}^{\vee},\,\, 
\widetilde{\Delta}_{1} = \left(\frac{1}{2},0,0\right) + \widetilde{\omega}^{\vee},\,\,
\widetilde{\Delta}_{\infty} = \rm Conv\left( (0,1,-1),
\left(\frac{1}{2},0,0\right),\left(0,\frac{1}{2},0\right)\right) + \it \widetilde{\omega}^{\vee}.
\end{eqnarray*}
\end{exemple}
Fixons  un poly\`edre  $P\in\rm Pol_{\it\sigma^{\vee}}(\it \sigma^{\vee}_{M})$.
Consid\'erons un c\^one $\widetilde{\omega}\subset M_{\mathbb{Q}}\times\mathbb{Q}$ 
satisfaisant la relation
\begin{eqnarray*}
\widetilde{\omega}\cap M_{e} = (eP)\cap M, 
\end{eqnarray*}
pour tout $e\in\mathbb{N}$. Dans les deux prochains lemmes, nous
\'etudions des diviseurs $\widetilde{\omega}^{\vee}$-poly\'edraux dont les
pi\`eces gradu\'ees correspondantes \`a $M_{e}$ forment un id\'eal 
de $A$.

\begin{lemme}
Soit $\widetilde{\mathfrak{D}} = \sum_{z\in C}\widetilde{\Delta}_{z}\cdot z$ un 
diviseur $\widetilde{\omega}^{\vee}$-poly\'edral propre.
Alors les conditions suivantes sont \'equivalentes.
\begin{enumerate}
 \item[\rm (i)] 
Pour tout 
$z\in C$, le poly\`edre $\Delta_{z}$ est la projection orthogonale de 
$\widetilde{\Delta}_{z}$ parall\`element \`a $\mathbb{Q}\,(0_{M},1)$
et si $(v,p)$ est un sommet de $\widetilde{\Delta}_{z}$ alors $p\leq 0$;
\item[\rm (ii)] Pour tout entier $e\geq 0$, 
\begin{eqnarray*}
 I_{[e]} := \bigoplus_{m\in (eP)\cap M}
H^{ 0}(C,\mathcal{O}_{C}(\lfloor \widetilde{\mathfrak{D}}(m, e )\rfloor ))\chi^{m}
\end{eqnarray*}
est un id\'eal de $A$ et $I_{[0]} = A$.
\end{enumerate}
\end{lemme}
\begin{proof}
Soit $z\in C$. Notons $\Delta'_{z}$ la projection orthogonale de $\widetilde{\Delta}_{z}$
parall\`element \`a l'axe $\mathbb{Q}\,(0_{M},1)$. Montrons l'implication $(\rm i)\Rightarrow (\rm ii)$.
Puisque pour $m\in\sigma^{\vee}$,
$h_{\widetilde{\Delta}_{z}}(m,0) = h_{\Delta'_{z}}(m)$,
on a l'\'egalit\'e $I_{[\rm 0\it ]} = A$. Le lemme $1.10$ implique
que pour tout $e\in\mathbb{N}$, $I_{[e]}\subset A$. D'o\`u
l'assertion $(\rm ii)$.

R\'eciproquement, la propret\'e de $\mathfrak{D}$ et l'\'egalit\'e $A = I_{[0]}$ montre que
$\Delta_{z} = \Delta'_{z}$.
Soit $(v,p)$ un sommet de $\widetilde{\Delta}_{z}$. Il reste \`a montrer que $p\leq 0$.
La partie   
\begin{eqnarray*}
\lambda := \left\{\,(m,e)\in M_{\mathbb{Q}}\times\mathbb{Q},\,h_{\widetilde{\Delta}_{z}}(m,e) = m(v)+ep\,\right\} 
\end{eqnarray*}
est un c\^one d'interieur non vide [AH, $\S 1$]. 
Donc l'ensemble $\lambda_{M\times\mathbb{Z}} = \lambda\cap (M\times\mathbb{Z})$ contient
un \'el\'ement $(m',e')$ v\'erifiant $e'\geq 1$. Soit 
$v'\in \Delta_{z}$ tels que 
\begin{eqnarray*}
 m'(v') = h_{\Delta_{z}}(m').
\end{eqnarray*}
Par propret\'e de $\widetilde{\mathfrak{D}}$ et [AH, Lemma $9.1$], il s'ensuit que 
\begin{eqnarray*}
 m'(v) + e'p = h_{\widetilde{\Delta}_{z}}(m',e')\leq h_{\Delta_{z}}(m') = m'(v'). 
\end{eqnarray*}
Comme $v$ appartient \`a $\Delta_{z}$, on a $m'(v')\leq m'(v)$ et donc $p\leq 0$. D'o\`u le r\'esultat.
\end{proof}
\begin{lemme}
Soit $\widetilde{\mathfrak{D}}$ un diviseur $\widetilde{\omega}^{\vee}$-poly\'edral sur $C$.
Supposons que $\widetilde{\mathfrak{D}}$ v\'erifie la condition
$\rm (ii)$ du lemme $4.4$. Alors pour tout 
$e\in\mathbb{N}$, $I_{[e]}$ est un id\'eal homog\`ene int\'egralement clos.
\end{lemme}
\begin{proof}
Soit $e\in\mathbb{N}$. 
Il suffit de montrer que tout \'el\'ement homog\`ene 
de $\bar{I}_{[e]}$ appartient \`a $I_{[e]}$. 
Soit $a\in\bar{I}_{[e]}$ un \'el\'ement homog\`ene. 
L'\'el\'ement $a\chi^{(0,e)}$ appartient \`a la normalisation de
$A[C,\widetilde{\mathfrak{D}}]$. Puisque $A[C,\widetilde{\mathfrak{D}}]$ est normal [De, $\S 2.7$], 
nous avons $a\in I_{[e]}$.  
\end{proof}
Le th\'eor\`eme suivant d\'ecrit les id\'eaux homog\`enes int\'egralement
clos pour la compl\'exit\'e $1$. Soit $S$ le semi-groupe des poids de $A$.
Dans ce th\'eor\`eme, nous consid\'erons
des couples $(P,\widetilde{\mathfrak{D}})$ v\'erifiant les conditions suivantes.
\begin{enumerate}
\item[\rm (i)] $P$ est un poly\`edre de $\rm Pol_{\sigma^{\vee}}(\it S)$;
\item[\rm (ii)] $\widetilde{\mathfrak{D}} = \sum_{z\in C}\widetilde{\Delta}_{z}\cdot z$ 
est un diviseur $\widetilde{\omega}^{\vee}$-poly\'edral
o\`u  $\widetilde{\omega}\subset M_{\mathbb{Q}}\times\mathbb{Q}$ v\'erifie 
\begin{eqnarray*}
\widetilde{\omega}\cap M_{e} = (eP)\cap M, 
\end{eqnarray*}
pour tout entier $e\geq 0$;
\item[\rm (iii)] Pour tout 
$z\in C$, $\Delta_{z}$ est la projection orthogonale de $\widetilde{\Delta}_{z}$ parall\`element \`a 
$\mathbb{Q}\,(0_{M},1)$
et tout sommet $(v,p)\in\widetilde{\Delta}_{z}$ satisfait $p\leq 0$; 
\item[\rm (iv)]
Pour tout $z\in C$, $\widetilde{\Delta}_{z}$ est 
l'intersection dans $\Delta_{z}\times\mathbb{Q}$
d'un nombre fini de demi-espaces 
\begin{eqnarray*}
\Delta_{m,e} :=\left\{\, (v,p)\in N_{\mathbb{Q}}\times\mathbb{Q},\,\,m(v)+p\geq e\,\right\}  
\end{eqnarray*}
avec $e\in\mathbb{Z}$ et $m\in P\cap M$ tel que les sections globales du faisceau 
$\mathcal{O}_{C}(\lfloor\widetilde{\mathfrak{D}}(m,1)\rfloor)$ engendrent le germe
$\mathcal{O}_{C}(\lfloor\widetilde{\mathfrak{D}}(m,1)\rfloor)_{z}$.
\end{enumerate}

\begin{theorem}
Soit $\mathfrak{D} = \sum_{z\in C}\Delta_{z}\cdot z$ un diviseur $\sigma$-poly\'edral
propre et soit $A = A[C,\mathfrak{D}]$.
Alors il existe une bijection entre l'ensemble des id\'eaux homog\`enes int\'egralement clos de $A$
et l'ensemble des couples $(P,\widetilde{\mathfrak{D}})$ v\'erifiant $\rm (i) - \rm (iv)$.
Cette correspondance est donn\'ee par
\begin{eqnarray}
(P,\widetilde{\mathfrak{D}})\mapsto I = \bigoplus_{m\in P\cap M}
H^{ 0}(C,\mathcal{O}_{C}(\lfloor \widetilde{\mathfrak{D}}(m, 1 )\rfloor ))\chi^{m}. 
\end{eqnarray}
De plus, sous cette correspondance, l'alg\`ebre $A[C,\widetilde{\mathfrak{D}}]$ s'identifie 
\`a la normalisation de l'alg\`ebre de Rees $A[It]$.
\end{theorem}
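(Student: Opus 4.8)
Le plan est de construire une application r\'eciproque $\Psi$ de l'application $\Phi$ d\'efinie par $(9)$, puis d'\'etablir les deux \'egalit\'es $\Phi\circ\Psi = \mathrm{id}$ et $\Psi\circ\Phi = \mathrm{id}$, la bijectivit\'e en r\'esultant aussit\^ot. \`A un id\'eal homog\`ene int\'egralement clos $I$ de $A$, j'associe le couple $\Psi(I) = (P,\widetilde{\mathfrak{D}})$ o\`u $P = \rm Conv(\it E\rm) + \sigma^{\vee}$, l'ensemble $E$ d\'esignant les poids de $I$, et o\`u $\widetilde{\mathfrak{D}}$ est l'unique diviseur poly\'edral propre fourni par le th\'eor\`eme $2.4$ appliqu\'e \`a l'alg\`ebre de Rees $A[It]$, autrement dit v\'erifiant $\overline{A[It]} = A[C,\widetilde{\mathfrak{D}}]$. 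Le th\'eor\`eme $4.2$ et le lemme $4.1$ fournissent d\'ej\`a la forme explicite de $P$ et des coefficients de $\widetilde{\mathfrak{D}}$ \`a partir d'un syst\`eme de g\'en\'erateurs homog\`enes de $I$.

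D'abord je v\'erifierais que $\Phi$ est bien d\'efinie. Soit $(P,\widetilde{\mathfrak{D}})$ un couple satisfaisant (i)--(iv). La condition (iii) n'est autre que la condition (i) du lemme $4.4$, lequel entra\^ine que pour tout $e\geq 0$ la pi\`ece gradu\'ee $I_{[e]}$ est un id\'eal de $A$ avec $I_{[0]} = A$; en particulier $I = I_{[1]}$ est un id\'eal homog\`ene. Le lemme $4.5$ montre alors que chaque $I_{[e]}$ est int\'egralement clos, donc $I$ l'est aussi. (La propret\'e de $\widetilde{\mathfrak{D}}$, requise par ces lemmes, provient de ce que $A[C,\widetilde{\mathfrak{D}}]$ est de type fini, via le th\'eor\`eme $1.7$.) R\'eciproquement, je v\'erifierais que $\Psi(I)$ satisfait (i)--(iv) : le th\'eor\`eme $4.2$ donne (i) et, avec le lemme $4.1$, (ii); la condition (iii) r\'esulte de l'implication (ii)$\Rightarrow$(i) du lemme $4.4$ appliqu\'ee aux id\'eaux $\overline{I^{e}}$; et (iv) traduit la forme des coefficients $\widetilde{\Delta}_{z}$ donn\'ee en $4.2$\,(ii) apr\`es suppression des demi-espaces redondants. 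L'\'egalit\'e $\Phi\circ\Psi = \mathrm{id}$ est alors imm\'ediate : pour $I$ int\'egralement clos, le th\'eor\`eme $4.2$ fournit
\[
\Phi(\Psi(I)) = \bigoplus_{m\in P\cap M}H^{0}(C,\mathcal{O}_{C}(\lfloor\widetilde{\mathfrak{D}}(m,1)\rfloor))\chi^{m} = \overline{I^{1}} = \overline{I} = I.
\]

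Le point d\'elicat sera l'\'egalit\'e $\Psi\circ\Phi = \mathrm{id}$. Partant d'un couple admissible $(P,\widetilde{\mathfrak{D}})$ et posant $I = \Phi(P,\widetilde{\mathfrak{D}})$, la r\'ecup\'eration de $P$ est ais\'ee : les poids de $I$ sont exactement les points de $P\cap M$, la non-nullit\'e des $H^{0}(C,\mathcal{O}_{C}(\lfloor\widetilde{\mathfrak{D}}(m,1)\rfloor))$ r\'esultant de la propret\'e, et le lemme $3.4$ donne $\rm Conv(\it P\cap M\rm) + \sigma^{\vee} = P$. Il reste \`a identifier $\widetilde{\mathfrak{D}}$ avec le diviseur poly\'edral $\widetilde{\mathfrak{D}}'$ de $\overline{A[It]}$, dont le coefficient $\widetilde{\Delta}'_{z}$ est, d'apr\`es le th\'eor\`eme $4.2$, l'intersection dans $\Delta_{z}\times\mathbb{Q}$ des demi-espaces associ\'es aux \'el\'ements homog\`enes de degr\'e un de $I$. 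L'inclusion $\widetilde{\Delta}'_{z}\subset\widetilde{\Delta}_{z}$ est claire, les demi-espaces retenus par (iv) figurant parmi ceux qui d\'efinissent $\widetilde{\Delta}'_{z}$. La difficult\'e principale sera l'inclusion r\'eciproque : il faut montrer que les seules facettes r\'ealis\'ees \`a \'egalit\'e par un \'el\'ement de degr\'e un, retenues par la condition (iv), suffisent d\'ej\`a \`a d\'ecouper $\widetilde{\Delta}'_{z}$, c'est-\`a-dire que les in\'egalit\'es non tendues sont redondantes et qu'aucune facette de la normalisation de Rees n'est perdue. Une fois \'etablie l'\'egalit\'e $\widetilde{\Delta}_{z} = \widetilde{\Delta}'_{z}$ pour tout $z$, on obtient $\widetilde{\mathfrak{D}} = \widetilde{\mathfrak{D}}'$, donc $\Psi(\Phi(P,\widetilde{\mathfrak{D}})) = (P,\widetilde{\mathfrak{D}})$; l'injectivit\'e et la surjectivit\'e de $\Phi$ d\'ecoulent alors des deux \'egalit\'es de composition.
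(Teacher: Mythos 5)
Your overall architecture (a map $\Phi$ given by la formule $(8)$ et une r\'eciproque $\Psi$ envoyant $I$ sur le couple issu de $\overline{A[It]}$) is the same as the paper's, but the two places where you defer or shortcut the argument are exactly where condition $\rm (iv)$ does its work, and they are not filled in. First, your justification of the properness of $\widetilde{\mathfrak{D}}$ is circular: for an arbitrary pair satisfying $\rm (i)-(iv)$ you do not know a priori that $A[C,\widetilde{\mathfrak{D}}]$ is finitely generated, so you cannot invoke Theorem $1.7$ to conclude properness --- finite generation (equivalently, properness) is precisely what has to be proved. The paper obtains it from $\rm (iv)$: each $\widetilde{\Delta}_{z}$ is cut out by half-spaces admitting \emph{tight} homogeneous elements $f_{ij}\chi^{(m_{ij},1)}\in A[C,\widetilde{\mathfrak{D}}]$ with $h_{\widetilde{\Delta}_{z}}(m_{ij},1) = -\nu_{z}(f_{ij})$; taking $B$ to be the finitely generated algebra engendr\'ee par $A$ et ces \'el\'ements, Theorem $4.2$ shows that $\overline{B} = A[C,\widetilde{\mathfrak{D}}]$, whence properness.

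Second, you explicitly leave your \og difficult\'e principale \fg{} ($\widetilde{\Delta}_{z}\subset\widetilde{\Delta}'_{z}$) unproved, so the proposal stops at the decisive point. Ironically, that inclusion is the easy one: any homogeneous $f\chi^{(m,1)}\in I = I_{[1]}$ is a section of $\mathcal{O}_{C}(\lfloor\widetilde{\mathfrak{D}}(m,1)\rfloor)$, hence $-\nu_{z}(f)\leq h_{\widetilde{\Delta}_{z}}(m,1)$, so every half-space defining $\widetilde{\Delta}'_{z}$ contains $\widetilde{\Delta}_{z}$. The direction that genuinely depends on $\rm (iv)$ is the one you call \og claire \fg, and there your justification needs at least the remark that one may enlarge the generating system of $I$ by the tight elements without changing $\widetilde{\mathfrak{D}}'$ (uniqueness in Theorem $1.7$), together with $e\leq h_{\widetilde{\Delta}_{z}}(m,1) = -\nu_{z}(f)$. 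The paper handles both issues at once by the sandwich $B\subset A[It]\subset A[C,\widetilde{\mathfrak{D}}] = \overline{B}$, which yields $\overline{A[It]} = A[C,\widetilde{\mathfrak{D}}]$ and hence injectivity; Remark $4.8$ shows this equality really can fail when $\rm (iv)$ is dropped, so no soft argument avoids this step. Finally, for surjectivity your phrase \og apr\`es suppression des demi-espaces redondants \fg{} hides a genuine verification: one must prove that in a minimal defining family each half-space actually touches $\widetilde{\Delta}_{z}$ (the paper's clopen/connectedness argument), since otherwise the tightness equality $h_{\widetilde{\Delta}_{z}}(m,1) = -\nu_{z}(f)$ required by $\rm (iv)$ is not established.
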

\begin{proof}
Soit $(P,\widetilde{\mathfrak{D}})$ un couple v\'erifiant les conditions $\rm (i)-\rm (iv)$.
Montrons que $\widetilde{\mathfrak{D}}$ est propre. Soient
$z_{1},\ldots, z_{s}$ les points du support de $\widetilde{\mathfrak{D}}$.
D'apr\`es $\rm (iv)$, on a pour $j = 1,\ldots,s$,
\begin{eqnarray*}
\widetilde{\Delta}_{z_{j}} = (\Delta_{z}\times\mathbb{Q})\cap\bigcap_{i = 1}^{r_{j}}\Delta_{m_{ij},e_{ij}} 
\end{eqnarray*}
avec $e_{ij}\in\mathbb{Z}$ et $m_{ij}\in P\cap M$ tel qu'il existe $f_{ij}\chi^{(m_{ij},1)}\in 
A[C,\widetilde{\mathfrak{D}}]$ homog\`ene satisfaisant
 $\nu_{z_{j}}(f_{ij}) = h_{\widetilde{\Delta}_{z_{j}}}(m_{ij},1)$.
Consid\'erons l'alg\`ebre $B$ engendr\'ee par $A$ et par les \'el\'ements
\begin{eqnarray}
 f_{ij}\chi^{(m_{ij},1)},\,\,1\leq j\leq s,\, 1\leq i\leq r_{j}. 
\end{eqnarray}
Par le th\'eor\`eme $4.2$, l'anneau $A[C,\widetilde{\mathfrak{D}}]$ est la normalisation de $B$.
Ce qui donne la propret\'e de $\widetilde{\mathfrak{D}}$.

Les lemmes $4.4$ et $4.5$
montrent que l'application $(8)$ est bien d\'efinie.
Ainsi, $A[C,\widetilde{\mathfrak{D}}] = \overline{A[It]}$ 
o\`u $t = \chi^{(0,1)}$. Par l'\'egalit\'e $(6)$ de $3.1$, 
on d\'eduit l'injectivit\'e.

Montrons la surjectivit\'e. Soit $I$ un id\'eal int\'egralement clos de $A$
engendr\'e par des \'el\'ements homog\`enes $f_{1}\chi^{m_{1}},\ldots, f_{r}\chi^{m_{r}}$.
Consid\'erons le couple $(P,\widetilde{\mathfrak{D}})$ obtenu \`a partir du th\'eor\`eme $4.2$. 
Des lemmes $4.1$, $4.4$ et $4.5$, 
on obtient $\rm (i)$, $\rm (ii)$, $\rm (iii)$. Montrons $\rm (iv)$. Pour un point $z\in C$,
\'ecrivons
\begin{eqnarray*}
\widetilde{\Delta}_{z} = (\Delta_{z}\times\mathbb{Q})\cap\bigcap_{i = 1}^{r}\Delta_{m_{i},e_{i}} 
\end{eqnarray*}
avec pour tout $i$, $e_{i}:=-\nu_{z}(f_{i})$. Soit $E$ un sous-ensemble minimal de $\{1,\ldots, r\}$
tel que 
\begin{eqnarray*}
\widetilde{\Delta}_{z} = (\Delta_{z}\times\mathbb{Q})\cap \bigcap_{i\in E}\Delta_{m_{i},e_{i}}. 
\end{eqnarray*}
Fixons un \'el\'ement $i'\in E$. Alors on a
\begin{eqnarray}
\{(v,p)\in N_{\mathbb{Q}}\times\mathbb{Q}, m_{i'}(v)+ p = e_{i'}\}\cap\widetilde{\Delta}_{z}\neq\emptyset. 
\end{eqnarray}
Donc $h_{\widetilde{\Delta}_{z}}(m_{i'},1) = e_{i'} = -\nu_{z}(f_{i'})$. D'o\`u 
la condition $\rm (iv)$.
\end{proof}
\paragraph{}
Lorsque $C$ est affine, tout fibr\'e en droite au-dessus de $C$ est globalement engendr\'e.
Ainsi, comme cons\'equence imm\'ediate, on a le r\'esultat suivant. 

\begin{corollaire}
Supposons que $C$ est affine.
Alors
il existe une bijection entre l'ensemble des id\'eaux homog\`enes int\'egralement clos de $A$
et l'ensemble des couples $(P,\widetilde{\mathfrak{D}})$ v\'erifiant $\rm (ii), (iii)$ du th\'eor\`eme
$4.6$ et les conditions  suivantes.
\begin{enumerate}
\item[\rm (i)'] $P$ est un poly\`edre de $\rm Pol_{\sigma^{\vee}}(\it \sigma^{\vee}_{M})$;
\item[\rm (iv)']
Pour tout $z\in C$, le poly\`edre $\widetilde{\Delta}_{z}$ est 
l'intersection dans $\Delta_{z}\times\mathbb{Q}$
d'un nombre fini de demi-espaces 
\begin{eqnarray*}
\Delta_{m,e} :=\left\{\, (v,p)\in N_{\mathbb{Q}}\times\mathbb{Q},\,\,m(v)+p\geq e\,\right\}  
\end{eqnarray*}
avec $e\in\mathbb{Z}$ et $m\in P\cap M$.
\end{enumerate}
La correspondance est donn\'ee par l'application $(8)$ de $4.6$.
\end{corollaire}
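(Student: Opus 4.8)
The plan is to deduce the corollary directly from Theorem~4.6 by proving that, when $C$ is affine, conditions (i) and (iv) may be replaced by the simpler conditions (i)$'$ and (iv)$'$ \emph{without changing the underlying set of admissible pairs} $(P,\widetilde{\mathfrak{D}})$. Once this equivalence is in hand, the map (8) and its inverse (the normalised Rees algebra construction) are literally those of Theorem~4.6, so the bijection transfers verbatim.

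First I would record two consequences of affineness. Since $C$ is affine, by Remark~1.12 the $\mathbb{T}$-action on $X=\operatorname{Spec}A$ is not elliptic, so Proposition~1.11(i) gives that the weight semigroup $S$ of $A$ equals $\sigma^{\vee}_{M}$; hence $\operatorname{Pol}_{\sigma^{\vee}}(S)=\operatorname{Pol}_{\sigma^{\vee}}(\sigma^{\vee}_{M})$ and (i) coincides with (i)$'$. Moreover, by 1.5(ii), on an affine curve every polyhedral divisor is automatically proper, so properness need not be imposed on $\widetilde{\mathfrak{D}}$; Lemmas~4.4 and 4.5 then apply to any pair satisfying (ii) and (iii), showing that (8) is well defined and lands among the integrally closed homogeneous ideals.

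It remains to prove that, under (i)$'$, (ii), (iii), condition (iv) is equivalent to (iv)$'$. The implication (iv)$\Rightarrow$(iv)$'$ is immediate: a half-space $\Delta_{m,e}$ occurring in (iv) has $m\in P\cap M$ and $e=-\nu_{z}(f)\in\mathbb{Z}$, hence is of the form allowed in (iv)$'$. For the converse, given a presentation $\widetilde{\Delta}_{z}=(\Delta_{z}\times\mathbb{Q})\cap\bigcap_{i}\Delta_{m_{i},e_{i}}$ as in (iv)$'$, I would pass to a minimal subset $E$ of indices and reuse, verbatim, the openness/minimality argument from the proof of Theorem~4.6 (the set $\{(v,p): m_{i'}(v)+p>e_{i'}\}$ is open, and $\widetilde{\Delta}_{z}$ would otherwise be clopen in the larger polyhedron): this forces each surviving hyperplane to be supporting, so that $h_{\widetilde{\Delta}_{z}}(m_{i'},1)=e_{i'}\in\mathbb{Z}$ for every $i'\in E$, with $m_{i'}\in P\cap M$.

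The one genuinely new point, and the main obstacle, is producing for each such $i'$ a homogeneous element $f_{i'}\chi^{(m_{i'},1)}\in A[C,\widetilde{\mathfrak{D}}]$ with $\nu_{z}(f_{i'})=-h_{\widetilde{\Delta}_{z}}(m_{i'},1)$; this is precisely where affineness is essential. Here I would observe that $(m_{i'},1)\in\widetilde{\omega}$, since $m_{i'}\in P\cap M$, so by Proposition~1.11(i) the graded piece $H^{0}(C,\mathcal{O}_{C}(\lfloor\widetilde{\mathfrak{D}}(m_{i'},1)\rfloor))$ is a nonzero, locally free rank-one $\mathbf{k}[C]$-module, that is, an invertible fractional ideal of the Dedekind domain $\mathbf{k}[C]$. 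Such a fractional ideal contains, for the prime corresponding to $z$, an element attaining its minimal valuation at $z$ (by localisation together with weak approximation at the remaining finite poles), and this yields the desired $f_{i'}$ with order exactly $-e_{i'}=-h_{\widetilde{\Delta}_{z}}(m_{i'},1)$ at $z$. This establishes (iv) and hence the equality of the two sets of admissible pairs; combined with Theorem~4.6, the corollary follows, the correspondence being again given by (8).
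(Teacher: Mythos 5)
Your proof is correct, and its skeleton matches the paper's: both reduce the corollary to showing $\rm (i)\Leftrightarrow (i)'$ (immediate from non-ellipticity, i.e.\ Proposition 1.11(i)) together with $\rm (iv)\Leftrightarrow (iv)'$, and both rest on the same key lemma, namely that for an integral Weil divisor $D=\sum_{z}a_{z}\cdot z$ on the affine curve $C$ and any point $z$, some nonzero section $f\in H^{0}(C,\mathcal{O}_{C}(D))$ attains $\nu_{z}(f)=-a_{z}$. Where you genuinely diverge is in the proof of this lemma. The paper bootstraps its own machinery: it takes $\mathbf{k}[C]$-module generators $f_{1},\ldots,f_{r}$ of $H^{0}(C,\mathcal{O}_{C}(D))$, notes that $\mathbf{k}[C][f_{1}t,\ldots,f_{r}t]=\bigoplus_{i\geq 0}H^{0}(C,\mathcal{O}_{C}(iD))t^{i}$, and then invokes Corollary 2.7 (hence Theorem 2.4) to conclude $D=-\min_{1\leq i\leq r}\operatorname{div}(f_{i})$, so that at each point one of the generators realizes the extremal order. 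You argue instead by classical commutative algebra: the relevant graded piece is a nonzero invertible fractional ideal of the Dedekind domain $\mathbf{k}[C]$ (you cite Proposition 1.11(i), though invertibility holds for any module of sections of an integral divisor on a smooth affine curve), and weak approximation at the finitely many points of the support produces an element of exact valuation $-a_{z}$ at $z$. Both arguments are valid; the paper's is shorter given what it has already established and stays entirely within its own framework, while yours is more elementary and makes no further appeal to the normalization theorem. A point in your favor regarding completeness: you make explicit the refinement to a minimal family of half-spaces (reusing the clopen/minimality argument from the surjectivity part of Theorem 4.6), which is needed to guarantee that each remaining hyperplane is supporting and hence that $h_{\widetilde{\Delta}_{z}}(m_{i'},1)=e_{i'}\in\mathbb{Z}$ before the valuation lemma can be applied; the paper leaves this step implicit when it asserts that the valuation statement establishes the equivalence of $\rm (iv)$ and $\rm (iv)'$.
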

\begin{remarque}
Soient $g_{2},g_{3}\in\mathbf{k}$ tels que 
$g_{2}^{3}-27g_{3}^{2}\neq 0$.
Consid\'erons la courbe elliptique $C\subset \mathbb{P}^{2}$
d'\'equation de Weierstrass $y^{2} = 4x^{3}-g_{2}x-g_{3}$,
$O$ son point \`a l'infini et $A$
l'alg\`ebre gradu\'ee
\begin{eqnarray*}
\bigoplus_{m\in \mathbb{N}}H^{0}\left(C,\mathcal{O}_{C}\left(m\cdot O\right)\right)\chi^{m}. 
\end{eqnarray*}
Soit $\widetilde{\omega}\subset\mathbb{Q}^{2}$ le c\^one  
\begin{eqnarray*}
\mathbb{Q}_{\geq 0}(1,0)+\mathbb{Q}_{\geq 0}(1,1), 
\end{eqnarray*}
$P = \mathbb{Q}_{\geq 1}$
et $\widetilde{\mathfrak{D}} := \widetilde{\Delta}\cdot O$
le diviseur poly\'edral sur $C$ avec
$\widetilde{\Delta} = (1,0) + \widetilde{\omega}^{\vee}$. Alors
$(P,\widetilde{\mathfrak{D}})$ v\'erifie les conditions $\rm (i)$,
$\rm (ii)$, $\rm (iii)$ et $\rm (iv)'$ de $4.6$ et $4.7$. Cependant
la condition $\rm (iv)$ n'est pas satisfaite. L'alg\`ebre $A[C,\widetilde{\mathfrak{D}}]$
est donc distincte de la normalisation de $A[It]$ o\`u $I$ est l'id\'eal provenant
de $(8)$ et $t:=\chi^{(0,1)}$. 
\end{remarque}

\section{Exemples d'id\'eaux homog\`enes normaux}
Dans cette section, nous consid\'erons l'alg\`ebre $A = A[C,\mathfrak{D}]$
comme dans la section $4$. Supposons que $C$ est affine et fixons 
un id\'eal homog\`ene int\'egralement clos $I\subset A$. 
Nous allons donner des conditions sur le couple 
$(P,\widetilde{\mathfrak{D}})$
associ\'e (voir $4.6$) pour que $I$ soit normal. Comme
d'habitude, nous notons $\widetilde{\Delta}_{z}$ le coefficient 
de $\widetilde{\mathfrak{D}}$ au point $z\in C$. 
\begin{notation} 
Pour tout $z\in C$, nous consid\'erons le sous-ensemble
\begin{eqnarray*}
\widetilde{P}_{z}: = \rm Conv\left(\it\left\{ (m,i)\in (P\cap M)\times \mathbb{Z}, \,\,
 h_{\widetilde{\rm \Delta\it}_{z}}
(m,\rm 1\it) \geq -i\right\}\right).
\end{eqnarray*}
On v\'erifie ais\'ement que $\widetilde{P}_{z}$ est un poly\`edre entier de $M_{\mathbb{Q}}\times\mathbb{Q}$.
Par ailleurs, pour un ouvert non vide $U\subset C$, 
$\widetilde{\mathfrak{D}}|U := \sum_{z\in U}\widetilde{\Delta}_{z}\cdot z$ est 
le diviseur poly\'edral sur $U$ obtenu par restriction de $\widetilde{\mathfrak{D}}$.
\end{notation}
L'assertion suivante est inspir\'ee de la description de $X[C,\widetilde{\mathfrak{D}}]$
comme vari\'et\'e toro\"idale (voir [LS,$\S 2.6$], [KKMS, Chapter $2$, $4$]). 

\begin{lemme}
Soit $z\in C$. Supposons que $C$ est affine et que $\mathfrak{D}$, 
$\widetilde{\mathfrak{D}}$ ont au plus le point $z$ dans leurs supports. 
Si le poly\`edre $\widetilde{P}_{z}$ est normal alors l'id\'eal
$I$ est normal. 
\end{lemme}
\begin{proof}
Fixons $e\in \mathbb{Z}_{>0}$. D\'eterminons un sous-ensemble g\'en\'erateur de 
$\overline{I^{e}}$. Pour tout vecteur $(m,i)\in M\times\mathbb{Z}$,
posons 
\begin{eqnarray*}
B_{(m,i)} := H^{0}(C,\mathcal{O}_{C}(-i\cdot z))\chi^{m}. 
\end{eqnarray*}
Si $m\in(eP)\cap M$ alors on a l'\'egalit\'e
\begin{eqnarray*}
H^{0}(C,\mathcal{O}_{C}(\lfloor \widetilde{\mathfrak{D}}(m,e)\rfloor))\chi^{m} = 
\bigcup_{i\in\mathbb{Z},\, i\geq - h_{z,e}(m)}
B_{(m,i)} 
\end{eqnarray*}
 o\`u $h_{z,e}(m) : = h_{\widetilde{\Delta}_{z}}(m,e)$. Donc
l'id\'eal $\overline{I^{e}}$ est engendr\'e par
\begin{eqnarray*}
\bigcup_{(m,i)\in (e\widetilde{P}_{z})\cap(M\times\mathbb{Z})} B_{(m,i)}.
\end{eqnarray*}
Montrons que pour tout 
$(m,i)\in (e\widetilde{P}_{z})\cap(M\times\mathbb{Z})$, la partie
$B_{(m,i)}$ est incluse dans $I^{e}$.
Fixons un tel couple $(m,i)$. Par normalit\'e de $\widetilde{P}_{z}$,
il existe
\begin{eqnarray*}
(m_{1},i_{1}),\ldots,(m_{e},i_{e})\in \widetilde{P}_{z}\cap (M\times\mathbb{Z})\,\,\,\,
\rm tels\,\,que\,\,\,\,\it\sum_{j = \rm 1 \it}^{e}(m_{j},i_{j}) = (m,i). 
\end{eqnarray*}
Pour chaque $j = 1,\ldots, e$, $B_{(m_{j},i_{j})}$ est contenu
dans $I$. Puisque la multiplication 
\begin{eqnarray*}
B_{(m_{1},i_{1})}\otimes\ldots \otimes B_{(m_{e},i_{e})}\rightarrow B_{(m,i)}
\end{eqnarray*}
est surjective, on a $B_{(m,i)}\subset I^{e}$. D'o\`u le r\'esultat.
\end{proof}
Le th\'eor\`eme suivant se d\'eduit du lemme pr\'ec\'edent par localisation.
Il peut \^etre vu comme un analogue de l'assertion $3.6$.
\begin{theorem}
Supposons que $C$ est affine. Soit $I\subset A = A[C,\mathfrak{D}]$
un id\'eal homog\`ene int\'egralement clos et $(P,\widetilde{\mathfrak{D}})$
le couple correspondant.
Si pour tout point $z$ appartenant au support
de $\widetilde{\mathfrak{D}}$, 
le poly\`edre $\widetilde{P}_{z}$ est
normal alors l'id\'eal $I$ est normal. Pour tout 
entier $e\geq n:=\dim\,N_{\mathbb{Q}}$, l'id\'eal $\overline{I^{e}}$ est normal. 
En particulier, tout id\'eal stable int\'egralement clos d'une 
$\mathbf{k}^{\star}$-surface affine non elliptique est normal.
\end{theorem}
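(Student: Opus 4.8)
The plan is to deduce normality of $I$ from Lemme 5.2 by a localization argument on the curve $C$, and then to obtain the two remaining assertions by combining this with the normality of high dilations of polyhedra (Théorème 3.6).

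First I would reformulate the problem through the Rees algebra. Since $A$ is normal, $I$ is normal if and only if $A[It]$ is integrally closed, i.e. $A[It] = \overline{A[It]} = A[C,\widetilde{\mathfrak D}]$, the last equality coming from Théorème 4.2. As $A_0 = \mathbf{k}[C]$ and $A[It]$ is a finitely generated $\mathbf{k}[C]$-algebra, normality of $A[It]$ can be tested pointwise on $C = \operatorname{Spec} A_0$: every maximal ideal of $A[It]$ contracts to a closed point $z\in C$, so $A[It]$ is normal as soon as each localization $A[It]\otimes_{\mathbf{k}[C]}\mathcal O_{C,z}$ is normal. Moreover integral closure commutes with this localization and with the Altmann--Hausen construction, so the local model at $z$ is governed solely by the single coefficient $\widetilde{\Delta}_z$ of $\widetilde{\mathfrak D}$.

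Next I would make this local statement fit the hypotheses of Lemme 5.2. For each $z$ in $\operatorname{supp}\widetilde{\mathfrak D}$ I would restrict to the affine open $U_z = C\setminus(\operatorname{supp}\widetilde{\mathfrak D}\setminus\{z\})$; on $U_z$ both $\mathfrak D$ and $\widetilde{\mathfrak D}$ are supported at the single point $z$, and $\widetilde P_z$ is normal by hypothesis, so Lemme 5.2 applies and $I|_{U_z}$ is normal. At the remaining points $\widetilde{\mathfrak D}$ is locally trivial, so the local model is toric over $\mathcal O_{C,z}$ and its integral closures are the extensions of the toric ones described in Théorème 3.5, normality there reducing to the corresponding statement for the polyhedron $P$. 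Patching these local conclusions (the obstruction $\overline{A[It]}/A[It]$ being supported on $\operatorname{supp}\widetilde{\mathfrak D}$) then gives that $A[It]$ is normal, hence $I$ is normal.

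For the second assertion I would apply the first one to the integrally closed ideal $\overline{I^e}$. Its associated pair has Newton polyhedron $eP$ and, at each $z$, the polyhedron controlling normality is the $e$-fold dilation $e\widetilde P_z$; this uses the homogeneity $h_{\widetilde\Delta_z}(em,e) = e\,h_{\widetilde\Delta_z}(m,1)$. Since $\widetilde P_z$ lies in the $(n+1)$-dimensional space $M_{\mathbb Q}\times\mathbb Q$, Théorème 3.6 shows that $e\widetilde P_z$ is normal for every $e\geq (n+1)-1 = n$, at \emph{every} point $z$; hence $\overline{I^e}$ is normal for $e\geq n$ with no extra hypothesis. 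The third assertion is the case $n = \dim N_{\mathbb Q} = 1$: for a non-elliptic $\mathbf{k}^\star$-surface the curve $C$ is affine, and taking $e = n = 1$ in the second assertion yields that $\overline{I} = I$ is normal for every integrally closed invariant ideal $I$. The main obstacle I anticipate is the localization reduction itself: one must verify carefully that normality of the Rees algebra is genuinely a pointwise property over $C$ and that each local piece reproduces exactly the one-point-support configuration of Lemme 5.2, the behaviour at points outside $\operatorname{supp}\widetilde{\mathfrak D}$ (where the divisor degenerates and one falls back on the toric normality of $P$) being the delicate point to control uniformly; for the dilation argument this difficulty disappears, since Théorème 3.6 forces normality at all points once $e\geq n$.
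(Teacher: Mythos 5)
Your overall strategy is the paper's own: reduce normality of $I$ to the equality $A[It]=A[C,\widetilde{\mathfrak D}]$ via Théorème 4.2, check this locally over $C$, use Lemme 5.2 at the support points, and invoke Théorème 3.6 for the assertions about $\overline{I^{e}}$ (the paper implements the locality with a finite cover by principal opens $C_{f_i}$ built from the Chinese remainder theorem and an intersection argument on fractional ideals, rather than localizations at points; that difference is cosmetic). The genuine problem is your treatment of the points $z\notin\operatorname{supp}\widetilde{\mathfrak D}$. Over an open set $U$ missing the support, the local model is $\mathbf{k}[U]\otimes_{\mathbf k}\mathbf{k}[\sigma^{\vee}_{M}][I[P]t]$, and by Théorème 3.5 (plus faithfully flat descent of normality along $\mathbf{k}[\sigma^{\vee}_{M}][I[P]t]\to\mathbf{k}(U)\otimes_{\mathbf k}\mathbf{k}[\sigma^{\vee}_{M}][I[P]t]$) its normality is \emph{equivalent} to the normality of the polyhedron $P$ --- which is not a hypothesis of the theorem and which you never establish. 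Your parenthetical claim that the obstruction $\overline{A[It]}/A[It]$ is supported on $\operatorname{supp}\widetilde{\mathfrak D}$ is precisely this missing statement, so the argument is circular at the one point you yourself flag as delicate. Nor is the claim automatic: if it were, the theorem would apply with $\operatorname{supp}\widetilde{\mathfrak D}=\emptyset$ to the integrally closed ideal $\bigoplus_{m\in P\cap M}\mathbf{k}[C]\chi^{m}$ attached to the non-normal polyhedron of Remarque 3.7, which is not normal.

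The gap can be closed, and this is what the paper implicitly relies on when it applies Lemme 5.2 at the auxiliary points $z_{r+1},\dots,z_{s}$ chosen outside the support: when $\operatorname{supp}\widetilde{\mathfrak D}\neq\emptyset$, normality of $\widetilde P_{z_{0}}$ for a \emph{single} support point $z_{0}$ already forces normality of $P$. Indeed $P$ is the image of $\widetilde P_{z_{0}}$ under the projection $M_{\mathbb Q}\times\mathbb Q\to M_{\mathbb Q}$ (using $\operatorname{Conv}(P\cap M)=P$, Lemme 3.4), and $(0_{M},1)$ lies in the recession cone of $\widetilde P_{z_{0}}$; hence every $m\in(eP)\cap M$ lifts to a lattice point $(m,i)\in(e\widetilde P_{z_{0}})\cap(M\times\mathbb Z)$ (round a rational lift upwards), which by normality of $\widetilde P_{z_{0}}$ is a sum of $e$ lattice points of $\widetilde P_{z_{0}}$; projecting gives $m\in E_{[e,P]}$. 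Consequently $\widetilde P_{z}=P\times\mathbb Q_{\geq 0}$ is normal at every $z$ outside the support, and your localization argument then goes through. A secondary point to check, not just quote: for $\overline{I^{e}}$ you identify its polyhedron at $z$ with $e\widetilde P_{z}$, but homogeneity of $h_{\widetilde\Delta_{z}}$ only yields the inclusion $e\widetilde P_{z}\subset\widetilde P'_{z}$; the reverse inclusion is a rounding statement that uses the integrality built into condition $\rm (iv)$ of Théorème 4.6 (the paper is equally terse here, deducing the assertion ``directly'' from Théorème 3.6).
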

\begin{proof}
Notons $z_{1},\ldots, z_{r}$ les points distincts du support
de $\widetilde{\mathfrak{D}}$. Par le th\'eor\`eme des restes
chinois, l'application 
\begin{eqnarray*}
\pi :\mathbf{k}[C]\rightarrow \mathbf{k}^{r},\,\,\,f\mapsto (f(z_{1}),\ldots, f(z_{r})) 
\end{eqnarray*}
est surjective. Soit $(e_{1},\ldots,e_{r})$ la base canonique de 
$\mathbf{k}^{r}$ et pour $i = 1,\ldots, r$, prenons
$f_{i}$ un \'el\'ement de $\pi^{-1}(\{e_{i}\})$.
Pour une fonction r\'eguli\`ere $f\in\mathbf{k}[C]$, nous
notons $C_{f} = C-V(f)$ son lieu de non annulation. 
Soient $f_{r+1},f_{r+2},\ldots,f_{s}\in\mathbf{k}[C]$ tels que 
\begin{eqnarray*}
U:= C-\{z_{1},\ldots, z_{r}\} = \bigcup_{i = r+1}^{s}C_{f_{j}}.
\end{eqnarray*}
Alors on a l'\'egalit\'e 
\begin{eqnarray*}
C = \bigcup_{i = 1}^{s}C_{f_{i}}. 
\end{eqnarray*}
Consid\'erons $z_{r+1},z_{r+2},\ldots ,z_{s}\in U$ des points
tels que $f_{i}(z_{i})\neq 0$ pour $i = r+1,\ldots, s$.
Fixons un entier $i\in\mathbb{N}$ tel
que $1\leq i\leq s$. Puisque $\widetilde{P}_{z_{i}}$ est un
poly\`edre entier normal, par le lemme $5.2$, l'id\'eal
\begin{eqnarray*}
I_{f_{i}}: =  \bigoplus_{m\in P\cap M}
H^{0}(C_{f_{i}},\mathcal{O}_{C}(\lfloor\widetilde{\mathfrak{D}}(m,1)\rfloor ))\chi^{m} = 
I\otimes_{\mathbf{k}[C]}\mathbf{k}[C_{f_{i}}]\subset A\otimes_{\mathbf{k}[C]}\mathbf{k}[C_{f_{i}}]
\end{eqnarray*}
correspondant au couple $(P,\widetilde{\mathfrak{D}}|C_{f_{i}})$ est normal.
Donc nous avons 
\begin{eqnarray}
A[C_{f_{i}},\widetilde{\mathfrak{D}}|C_{f_{i}}] = 
\overline{(A\otimes_{\mathbf{k}[C]}\mathbf{k}[C_{f_{i}}])
[I_{f_{i}}t]} =
(A\otimes_{\mathbf{k}[C]}\mathbf{k}[C_{f_{i}}])
[I_{f_{i}}t] = A[It]\otimes_{\mathbf{k}[C]}
\mathbf{k}[C_{f_{i}}].
\end{eqnarray}
\'Ecrivons 
\begin{eqnarray*}
A[It] = \bigoplus_{(m,e)\in\widetilde{\sigma}^{\vee}_{M\times\mathbb{Z}}}
I_{(m,e)}\chi^{m}t^{e} 
\end{eqnarray*}
avec $I_{(m,e)}\subset\mathbf{k}(C)$, pour tout $(m,e)$.
Alors par $(11)$, le semi-groupe des poids
de $A[It]$ est $\widetilde{\sigma}^{\vee}_{M\times\mathbb{Z}}$.
Cela implique que chaque $I_{(m,e)}$ est un id\'eal fractionnaire
de $\mathbf{k}[C]$. Donc pour tout vecteur $(m,e)\in 
\widetilde{\sigma}^{\vee}_{M\times\mathbb{Z}}$, il existe un
diviseur de Weil entier $D_{(m,e)}$ sur $C$ tel que
\begin{eqnarray*}
 I_{(m,e)} = H^{0}(C,\mathcal{O}_{C}(D_{(m,e)})).
\end{eqnarray*} 
Comme 
\begin{eqnarray*}
A[It]\otimes_{\mathbf{k}[C]}\mathbf{k}[C_{f_{i}}] =  
\bigoplus_{(m,e)\in\widetilde{\sigma}^{\vee}_{M\times\mathbb{Z}}}
H^{0}(C_{f_{i}},\mathcal{O}_{C}(D_{(m,e)}))\chi^{m}t^{e},
\end{eqnarray*}
on a d'une part,
\begin{eqnarray*}
\bigcap_{i = 1}^{s}A[It]\otimes_{\mathbf{k}[C]}\mathbf{k}[C_{f_{i}}]
=A[It] 
\end{eqnarray*}
et d'autre part,
\begin{eqnarray*}
\bigcap_{i = 1}^{s}A[C_{f_{i}},\widetilde{\mathfrak{D}}|C_{f_{i}}]
= A[C,\widetilde{\mathfrak{D}}], 
\end{eqnarray*}
on conclut par $(11)$ que $A[It] = A[C,\widetilde{\mathfrak{D}}]$. 
Cela donne la normalit\'e de $I$. Le reste de la preuve est une 
cons\'equence directe du th\'eor\`eme $3.6$. 
\end{proof}
La prochaine assertion est une traduction combinatoire de [RRV, Proposition $3.1$]
via la correspondance du th\'eor\`eme $3.5$.
\begin{lemme}
Soit $n\in\mathbb{N}$ un entier, notons $\sigma^{\vee} = \mathbb{Q}^{n+1}_{\geq 0}$ et
soit $P$ un $\sigma^{\vee}$-poly\`edre entier contenu dans $\sigma^{\vee}$.
Alors les assertions suivantes sont \'equivalentes.
\begin{enumerate}
\item[\rm (i)] Le poly\`edre est normal;
\item[\rm (ii)] Pour tout $s\in\{1,\ldots,n\}$, 
on a l'\'egalit\'e $(sP)\cap\mathbb{Z}^{n+1} = E_{[s,P]}$ (voir $3.2$).
\end{enumerate}
\end{lemme}
\paragraph{}
Comme application du th\'eor\`eme $5.3$, nous obtenons une
caract\'erisation de la normalit\'e pour une classe d'id\'eaux 
de l'alg\`ebre des polyn\^omes.
\begin{corollaire}
Soit $n\in\mathbb{Z}_{>0}$.
Consid\'erons l'alg\`ebre des polyn\^omes $\mathbf{k}^{[n+1]} = \mathbf{k}[x_{0},x_{1},\ldots,x_{n}]$ \`a $n+1$
variables munie de la $\mathbb{Z}^{n}$-graduation 
\begin{eqnarray*}
\mathbf{k}^{[n+1]} = \bigoplus_{(m_{1},\ldots,m_{r})\in\mathbb{N}^{n}}\mathbf{k}[x_{0}]x_{1}^{m_{1}}\ldots\, x_{n}^{m_{n}}
\end{eqnarray*}
et soit $I$ un id\'eal homog\`ene de $A$. Alors les assertions suivantes sont \'equivalentes.
\begin{enumerate}
\item[\rm (i)] L'id\'eal $I$ est normal;
\item[\rm (ii)] Pour tout $e\in\{1,\ldots, n\}$, l'id\'eal $I^{e}$ est int\'egralement clos.
\end{enumerate}
\end{corollaire}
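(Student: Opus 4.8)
Le plan est de ramener l'énoncé au théorème $5.3$ en munissant $\mathbf{k}^{[n+1]}$ de sa structure de $\mathbb{T}$-variété affine de complexité $1$, avec $\mathbb{T} = (\mathbf{k}^{\star})^{n}$. La $\mathbb{Z}^{n}$-graduation donnée correspond à une opération non elliptique (puisque $A_{0} = \mathbf{k}[x_{0}] \neq \mathbf{k}$) de complexité $1$, la base étant la courbe affine $C = \mathbb{A}^{1} = \rm Spec\,\mathbf{k}[x_{0}]$. L'implication $\rm (i)\Rightarrow (ii)$ étant triviale par définition de la normalité, tout le travail porte sur $\rm (ii)\Rightarrow (i)$.

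\textbf{Identification du diviseur polyédral.}
D'abord je décrirais $\mathbf{k}^{[n+1]}$ comme $A[C,\mathfrak{D}]$. Ici $\sigma^{\vee} = \mathbb{Q}_{\geq 0}^{n}$ et, comme $A_{0} = \mathbf{k}[x_{0}]$ coïncide avec $\mathbf{k}[C]$ pour $C$ affine, le théorème $1.7$ (ii) garantit l'existence d'un unique $\mathfrak{D}$ propre. Comme $C$ est affine, la condition de propreté de $1.5$ (ii) est vide, et l'application du théorème $2.4$ aux générateurs $x_{1},\ldots,x_{n}$ (chacun de poids un vecteur de base, sans pôle) montre que $\mathfrak{D}$ est trivial : $\Delta_{z} = \sigma$ pour tout $z\in C$. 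L'algèbre est donc toroïdale sans coefficient polyédral effectif. Ensuite, pour un idéal homogène intégralement clos $I$, le théorème $4.6$ lui associe un couple $(P,\widetilde{\mathfrak{D}})$ avec $P\in\rm Pol_{\sigma^{\vee}}(\sigma^{\vee}_{M})$ et $\widetilde{\mathfrak{D}} = \sum_{z}\widetilde{\Delta}_{z}\cdot z$.

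\textbf{Réduction à la normalité des polyèdres $\widetilde{P}_{z}$.}
Le théorème $5.3$ affirme que si, pour tout $z$ du support de $\widetilde{\mathfrak{D}}$, le polyèdre entier $\widetilde{P}_{z}\subset M_{\mathbb{Q}}\times\mathbb{Q}$ est normal, alors $I$ est normal. Comme $\sigma^{\vee} = \mathbb{Q}_{\geq 0}^{n}$ et que l'axe supplémentaire $\mathbb{Q}(0_{M},1)$ porte le degré de Rees avec $p\leq 0$ sur les sommets (condition $\rm (iii)$ de $4.6$), le polyèdre $\widetilde{P}_{z}$ est un $\widetilde{\sigma}^{\vee}$-polyèdre entier contenu dans un octant $\mathbb{Q}_{\geq 0}^{n+1}$ (après le changement de variables $i\mapsto -i$ qui rend le dernier coefficient positif). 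C'est précisément la situation du lemme $5.4$ : un $\mathbb{Q}_{\geq 0}^{n+1}$-polyèdre entier est normal si et seulement si $(sP)\cap\mathbb{Z}^{n+1} = E_{[s,P]}$ pour tout $s\in\{1,\ldots,n\}$.

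\textbf{Traduction finale et point délicat.}
La dernière étape consiste à relier la condition $(sP)\cap\mathbb{Z}^{n+1} = E_{[s,\widetilde{P}_{z}]}$ pour $s\leq n$ à l'hypothèse $\rm (ii)$ du corollaire, à savoir que $I^{e}$ est intégralement clos pour $e\in\{1,\ldots,n\}$. Je soutiens que la normalité du polyèdre de Rees $\widetilde{P}_{z}$ jusqu'au niveau $s = n$ se reformule, via la correspondance $I\mapsto (P,\widetilde{\mathfrak{D}})$ et l'égalité $\overline{I^{e}} = I[\,e\cdot(\cdot)\,]$, exactement comme le fait que $I^{e}$ soit intégralement clos pour $e = 1,\ldots,n$. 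Concrètement, $I^{e}$ intégralement clos équivaut à $I^{e} = \overline{I^{e}}$, ce qui, au niveau des pièces graduées du côté Rees, se traduit par l'égalité $(e\widetilde{P}_{z})\cap(M\times\mathbb{Z}) = E_{[e,\widetilde{P}_{z}]}$ pour chaque $z$. Le point le plus délicat sera de vérifier proprement cette équivalence niveau par niveau : il faut s'assurer que l'indice de normalité du polyèdre (borné par $n$ grâce à la dimension $n+1$, conformément à $5.4$ et à $3.6$) correspond bien aux degrés $e\leq n$ du côté des idéaux, et que la localisation sur les cartes $C_{f_{i}}$ du théorème $5.3$ n'introduit pas de contraintes supplémentaires. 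Une fois cette correspondance établie, l'hypothèse $\rm (ii)$ force chaque $\widetilde{P}_{z}$ à être normal via $5.4$, et le théorème $5.3$ conclut à la normalité de $I$.
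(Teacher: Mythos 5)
Your proposal follows essentially the same route as the paper's proof: realize $\mathbf{k}^{[n+1]}$ as $A[C,\mathfrak{D}]$ with $C=\mathbb{A}^{1}$ and the trivial polyhedral divisor, associate to $I$ the pair $(P,\widetilde{\mathfrak{D}})$ of Theorem $4.6$, and conclude via Lemma $5.4$ and Theorem $5.3$. The ``delicate point'' you flag is handled in the paper exactly by the mechanism you anticipate: localizing at $f_{i}(x_{0})=\prod_{j\neq i}(x_{0}-z_{j})$ isolates each support point $z_{i}$, the localized ideal takes the explicit form $I_{f_{i}}=\bigoplus_{(m,e)\in\widetilde{P}_{z_{i}}\cap(M\times\mathbb{Z})}\mathbf{k}[1/f_{i}](x_{0}-z_{i})^{e}\chi^{m}$, and since integral closure commutes with localization, the hypothesis that $I^{s}$ is integrally closed for $s\in\{1,\ldots,n\}$ forces $(s\widetilde{P}_{z_{i}})\cap(M\times\mathbb{Z})=E_{[s,\widetilde{P}_{z_{i}}]}$, so each $\widetilde{P}_{z_{i}}$ is normal by Lemma $5.4$ and Theorem $5.3$ applies.
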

\begin{proof}
Posons $C:=\mathbb{A}^{1} = \rm Spec\,\mathbf{k}[\it x_{\rm \,0}]$, $\sigma := \mathbb{Q}^{n}_{\geq 0}$ et
$M := \mathbb{Z}^{n}$.
Consid\'erons le diviseur $\sigma$-poly\'edral $\mathfrak{D}$ sur la courbe $C$ dont l'\'evaluation est
identiquement nulle. Alors on a  $\mathbf{k}^{[n+1]} = A = A[C,\mathfrak{D}]$. 

Montrons l'implication $\rm (ii)\Rightarrow (i)$.
Soit $(P,\widetilde{\mathfrak{D}})$ le couple correspondant \`a l'id\'eal $I$.
Notons $z_{1},\ldots, z_{r}$ les points distincts du support de $\widetilde{\mathfrak{D}}$.
Pour $i = 1,\ldots,r$, consid\'erons le polyn\^ome
\begin{eqnarray*}
f_{i}(x_{0}) = \prod_{j\neq i}(x_{0} - z_{j}).
\end{eqnarray*}
Alors pour tout $i$, nous avons 
\begin{eqnarray*}
I_{f_{i}}:=I\otimes_{\mathbf{k}[C]}\mathbf{k}[C_{f_{i}}] = 
\bigoplus_{(m,e)\in \widetilde{P}_{z_{i}}\cap (M\times\mathbb{Z})}
\mathbf{k}\left[\frac{1}{f_{i}}\right](x_{0}-z_{i})^{e}\chi^{m}
\end{eqnarray*} 
o\`u pour $m=(m_{1},\ldots,m_{r})$, $\chi^{m} := x_{1}^{m_{1}}\ldots x_{n}^{m_{n}}$.
Fixons $s\in\{1,\ldots, n\}$. Alors on a d'une part,
\begin{eqnarray*}
I_{f_{i}}^{s} = \bigoplus_{(m,e)\in E_{[s,\widetilde{P}_{z_{i}}]}}
\mathbf{k}\left[\frac{1}{f_{i}}\right](x_{0}-z_{i})^{e}\chi^{m}
\end{eqnarray*}
et d'autre part,
\begin{eqnarray*}
\overline{I_{f_{i}}^{s}} = \bigoplus_{m\in (sP)\cap M}H^{0}(C_{f_{i}},
\mathcal{O}(\lfloor \widetilde{\mathfrak{D}}(m,e)\rfloor))\chi^{m} =
\bigoplus_{(m,e)\in (s\widetilde{P}_{z_{i})}\cap (M\times\mathbb{Z})}
\mathbf{k}\left[\frac{1}{f_{i}}\right](x_{0}-z_{i})^{e}\chi^{m}, 
\end{eqnarray*}
comparer avec [HS, Proposition $1.1.4$].
Puisque $I^{s}$ est int\'egralement clos, l'id\'eal $I_{f_{i}}^{s}\subset A_{f_{i}}$ l'est encore.
Ce qui donne par les \'egalit\'es pr\'ec\'edentes,
\begin{eqnarray*}
(s\widetilde{P}_{z_{i}})\cap (M\times\mathbb{Z}) 
= E_{[s,\widetilde{P}_{z_{i}}]}.
\end{eqnarray*}
En appliquant le lemme $5.4$, on d\'eduit que $\widetilde{P}_{z_{i}}$ est normal. 
Par le th\'eor\`eme $5.3$, on obtient que $I$ est normal. D'o\`u l'implication 
$\rm (ii)\Rightarrow (i)$. La r\'eciproque est ais\'ee.
\end{proof}
\small

\paragraph{}
\paragraph{}

\texttt{Adresse :
\\ Universit\'e Grenoble I, Institut Fourier, 
\\ UMR 5582  CNRS-UJF, BP 74, 
\\ 38402 St. Martin d'H\`eres c\'edex, France.}

\paragraph{}

\texttt{Courriel :
\\ Kevin.Langlois@ujf-grenoble.fr.}


\begin{thebibliography}{}

\end{thebibliography}


\begin{thebibliography}{1}
\bibliographystyle{alpha}



\bibitem[AH]{}
K. Altmann, J. Hausen.
\newblock {\em Polyhedral divisors and algebraic torus actions.}
\newblock Math. Ann. {\bf 334}. (2006). 557-607.

\bibitem[AHS]{}
K. Altmann, J. Hausen, H. S{\"u}ss.
\newblock {\em Gluing affine torus actions via divisorial fans.}
\newblock Transform. Groups. {\bf 13}. (2008). 215-242.

\bibitem[AOPSV]{}
K. Altmann, N. Owen Ilten, L. Petersen, H. S{\"u}ss, R. Volmert.
\newblock{\em The geometry of $T$-varieties.}
\newblock  IMPANGA Lecture Notes. Contributions to Algebraic Geometry. (2012). 17-69.

\bibitem[AFKKZ]{}
I. Arzhantsev, H. Flenner, S. Kaliman, F. Kutzschebauch, M. Zaidenberg.
\newblock {\em Flexible varieties and automorphism groups.}
\newblock Duke Math. J. {\bf 162}. (2013). 767-823.

\bibitem[AKZ]{}
I.V. Arzhantsev, K. Kuyumzhiyan, M. Zaidenberg.
\newblock{\em Flag varieties, toric varieties, and suspensions: three instances of infinite transitivity.}
\newblock Mat. Sbornik.  {\bf 203}. (2012). 3-30.; English translation: 
Sbornik Math. {\bf 203}. (2012). 923-949.


\bibitem[AL]{}
I.V. Arzhantsev, A. Liendo.
\newblock{\em Polyhedral divisors and $SL_2$-actions on affine $T$-varieties.}
\newblock Michigan Math. J. {\bf 61}. (2012). 731-762.

\bibitem[Br]{}
M. Brion.
\newblock {\em Sur la g\'eom\'etrie des vari\'et\'es sph\'eriques.}
\newblock Comment. Math. Helv. {\bf 66}. (1991). 237-262.

\bibitem[BGN]{}
W. Bruns, J. Gubeladze, T. Ng\^o Vi\^et.
\newblock{\em Normal polytopes, triangulations, and Koszul algebras.}
\newblock J. Reine Angew. Math. {\bf 485}. (1997). 123-160.

\bibitem[CLS]{}
D. Cox, J. Little, H. Schenck.
\newblock {\em Toric Varieties.}
\newblock Graduate Studies in Mathematics. AMS. {\bf 124}. (2011).

\bibitem[Da]{}
V.I. Danilov.
\newblock {\em The Geometry of Toric Varieties.}
\newblock Russian Math. Surveys. {\bf 33}. (1978). 97-154.

\bibitem[De]{}
M. Demazure.
\newblock {\em Anneaux gradu\'es normaux.}
\newblock Trav. Cours 37. (1988). 35-68. 

\bibitem[Du]{}
A. Dubouloz.
\newblock {\em Quelques remarques sur la notion de modification
affine.}
\newblock arxiv : 0503142. (2005). 5p.

\bibitem[FZ]{}
H. Flenner, M. Zaidenberg.
\newblock{\em Normal affine surfaces with {$\mathbb{C}^{\star}$}-actions.}
\newblock Osaka J.Math. {\bf 40}. (2003). 981-1009.


\bibitem[Fu]{}
W. Fulton.
\newblock {\em Introduction to toric varieties}.
\newblock Princeton University Press. {\bf 131}. (1993).



\bibitem[Ha]{}
R. Hartshorne.
\newblock{\em Algebraic Geometry.}
\newblock Graduate Texts in Mathematics. Springer-Verlag. {\bf 52}. (1977).

\bibitem[Ho]{}
M. Hochster.
\newblock{\em Rings of invariants of tori, Cohen-Macaulay rings generated
by monomials and Polytopes.}
\newblock Ann. of Math. {\bf 96}. (1972). 318-337.

\bibitem[HS]{}
C. Huneke, I. Swanson.
\newblock{\em Integral Closure of Ideals, Rings and Modules.}
\newblock Polycopi\'e : http://people.reed.edu/~iswanson/book/.

\bibitem[KZ]{}
S. Kaliman, M. Zaidenberg. 
\newblock {\em Affine modifications and affine hypersurfaces with a very transitive automorphism group.}
\newblock Transform. Groups. {\bf 4}. (1999). 53-95.

\bibitem[KKMS]{}
G. Kempf, F. Knudsen, D. Mumford, and B. Saint-Donat.
\newblock {\em Toroidal embeddings. {I}}.
\newblock Lecture Notes in Mathematics. Springer-Verlag. Berlin. {\bf 339}. (1973).


\bibitem[LeTe]{}
M. Lejeune-Jalabert, B. Teissier.
\newblock{\em Cl\^oture int\'egrale des id\'eaux et equisingularit\'e.}
\newblock S\'eminaire Lejeune-Teissier. Centre de Math\'ematiques \'Ecole 
Polytechnique. (1974).


\bibitem[Li]{}
A. Liendo.
\newblock {\em Affine $\mathbb{T}$-varieties of complexity one and locally nilpotent derivations.}
\newblock Transform. Groups. {\bf 15}. (2010). 389-425.

\bibitem[LS]{}
A. Liendo, H. S{\"u}ss.
\newblock {\em Normal Singularities with Torus Actions.} 
\newblock Tohoku Math. J. (to appear).

\bibitem[LV]{}
D. Luna, T. Vust.
\newblock {\em Plongements d'espaces homog\`enes.}
\newblock Comment. Math. Helv. {\bf 58}. (1983). 186-245.

\bibitem[NR]{}
D.G. Northcott, D. Rees. 
\newblock {\em Reductions of ideals in local rings.}
\newblock Proc. Cambridge. Philos. Soc. {\bf 50}. (1954). 145-158.

\bibitem[Od]{}
T. Oda.
\newblock {\em Convex Bodies and Algebraic Geometry. An Introduction to the Theory of Toric Varieties}.
\newblock Springer-Verlag. Berlin. {\bf 131}. (1985).



\bibitem[RRV]{}
L. Reid, L. G. Roberts, M. A. Vitulli.
\newblock{\em Some results on normal homogeneous ideals.}
\newblock Comm. in Alg. {\bf 31}. (2003). 4485-4506.

\bibitem[Ri]{}
P. Ribenboim.
\newblock {\em Anneaux de Rees int\'egralement clos.}
\newblock J. Reine Angew. Math. {\bf 204}. (1960). 99-107.

\bibitem[Ti]{}
D. Timashev.
\newblock {\em Torus actions of complexity one.}
\newblock In : Toric topology. Contemp. Math. AMS. {\bf 60}. (2008). 349-364.

\bibitem[Ti 2]{}
D. Timashev.
\newblock {\em Classification of {$G$}-manifolds of complexity {$1$}.}
\newblock  Izv. Ross. Akad. Nauk Ser. Mat. {\bf 61}. (1997). 127-162.

\bibitem[Vie]{}
E. Viehweg.
\newblock {\em Rational singularities of higher dimensional schemes.}
\newblock Proc. Amer. Soc. {\bf 63}. (1977). 6-8. 

\bibitem[Vit]{}
M. A. Vitulli.
\newblock {\em Some normal monomial ideals.}
\newblock   Contemp. Math. {\bf 324}. (2003). 205-217.

\bibitem[Va]{}
W. Vasconcelos.
\newblock {\em Integral closure : Rees algebras, multiplicities, algorithms.}
\newblock Springer. Monographs in Mathematics. XII. (2005). 519p. 

\bibitem[Vo]{}
R. Vollmert.
\newblock {\em Toroidal embeddings and polyhedral divisors.}
\newblock   Int. J. Algebra. {\bf 4}. (2010). 383-388.

\bibitem[ZS]{}
O. Zariski, P. Samuel.
\newblock{\em Commutative Algebra.}
\newblock Van Nostrand. Princeton. Vol. II. (1960).

\end{thebibliography}
\end{document}